\renewcommand{\subsectionmark}[1]{}
\newenvironment{plainfootnotes}{
  \deffootnote[0em]{0em}{0em}{}
}{
  \deffootnote[1em]{1.5em}{1em}{\textsuperscript{\thefootnotemark}}
}
\newenvironment{enumeratearabic}{
\begin{enumerate}[label=(\arabic*), leftmargin=0pt,labelindent=2em,itemindent=!]
}{
\end{enumerate}
}
\newenvironment{enumerateroman}{
\begin{enumerate}[label=(\roman*), leftmargin=0pt,labelindent=2em,itemindent=!]
}{
\end{enumerate}
}
\newenvironment{enumeratearabic*}{
\begin{enumerate*}[label=(\arabic*)] %
}{
\end{enumerate*}
}
\newenvironment{enumerateroman*}{
\begin{enumerate*}[label=(\roman*)] %
}{
\end{enumerate*}
}
\numberwithin{equation}{section}
\newtheorem{theoremcounter}{theoremcounter}[section]
\theoremstyle{plain}
\newtheorem{corollary}[theoremcounter]{Corollary}
\newtheorem{lemma}[theoremcounter]{Lemma}
\newtheorem{proposition}[theoremcounter]{Proposition}
\newtheorem{theorem}[theoremcounter]{Theorem}
\newtheorem{maintheoremcounter}{maintheoremcounter}
\newtheorem{maintheorem}[maintheoremcounter]{Theorem}
\theoremstyle{definition}
\newtheorem{definition}[theoremcounter]{Definition}
\theoremstyle{remark}
\newtheorem{example}[theoremcounter]{Example}
\newtheorem{remark}[theoremcounter]{Remark}
\newtheorem*{mainremark}{Remark}
\newtheorem*{remarkcomputation}{Computation}
\let\cal\undefined
 \newcommand{\texpdf}[2]{#1}
 \newcommand{\texpdf}[2]{\texorpdfstring{#1}{#2}}
\newcommand{\tx}{\ensuremath{\text}}
\newcommand{\tbf}{\bfseries}
\newcommand{\thdash}{\nbd th}
\newcommand{\nbd}{\nobreakdash-\hspace{0pt}}
\newcommand{\bboard}{\ensuremath{\mathbb}}
\newcommand{\cal}{\ensuremath{\mathcal}}
\renewcommand{\frak}{\ensuremath{\mathfrak}}
\newcommand{\bbH}{\ensuremath{\bboard H}}
\newcommand{\bbM}{\ensuremath{\bboard M}}
\newcommand{\cB}{\ensuremath{\cal{B}}}
\newcommand{\cC}{\ensuremath{\cal{C}}}
\newcommand{\cH}{\ensuremath{\cal{H}}}
\newcommand{\cO}{\ensuremath{\cal{O}}}
\newcommand{\cV}{\ensuremath{\cal{V}}}
\newcommand{\frake}{\ensuremath{\frak{e}}}
\newcommand{\rmd}{\ensuremath{\mathrm{d}}}
\newcommand{\rmt}{\ensuremath{\mathrm{t}}}
\newcommand{\rmC}{\ensuremath{\mathrm{C}}}
\newcommand{\rmD}{\ensuremath{\mathrm{D}}}
\newcommand{\rmH}{\ensuremath{\mathrm{H}}}
\newcommand{\rmI}{\ensuremath{\mathrm{I}}}
\newcommand{\rmL}{\ensuremath{\mathrm{L}}}
\newcommand{\rmM}{\ensuremath{\mathrm{M}}}
\newcommand{\rmQ}{\ensuremath{\mathrm{Q}}}
\newcommand{\rmR}{\ensuremath{\mathrm{R}}}
\newcommand{\rmS}{\ensuremath{\mathrm{S}}}
\newcommand{\rmW}{\ensuremath{\mathrm{W}}}
\newcommand{\td}{\tilde}
\newcommand{\wtd}{\widetilde}
\newcommand{\ov}{\overline}
\newcommand{\ul}{\underline}
\newcommand{\wht}{\widehat}
\newcommand*{\longhookrightarrow}{\ensuremath{\lhook\joinrel\relbar\joinrel\rightarrow}}
\newcommand*{\longtwoheadrightarrow}{\ensuremath{\relbar\joinrel\twoheadrightarrow}}
\newcommand{\ra}{\ensuremath{\rightarrow}}
\newcommand{\hra}{\ensuremath{\hookrightarrow}}
\newcommand{\thra}{\ensuremath{\twoheadrightarrow}}
\newcommand{\lra}{\ensuremath{\longrightarrow}}
\newcommand{\lhra}{\ensuremath{\longhookrightarrow}}
\newcommand{\lthra}{\ensuremath{\longtwoheadrightarrow}}
\newcommand{\mto}{\ensuremath{\mapsto}}
\newcommand{\lmto}{\ensuremath{\longmapsto}}
\newcommand{\ZZ}{\ensuremath{\mathbb{Z}}}
\newcommand{\QQ}{\ensuremath{\mathbb{Q}}}
\newcommand{\RR}{\ensuremath{\mathbb{R}}}
\newcommand{\CC}{\ensuremath{\mathbb{C}}}
\renewcommand{\Im}{\ensuremath{\mathrm{Im}}}
\renewcommand{\pmod}[1]{\ensuremath{\;(\mathrm{mod}\, #1)}}
\newcommand{\Hom}{\ensuremath{\mathop{\mathrm{Hom}}}}
\newenvironment{psmatrix}{\left(\begin{smallmatrix}}{\end{smallmatrix}\right)}
\newcommand{\GL}[1]{\ensuremath{\mathrm{GL}_{#1}}}
\newcommand{\SL}[1]{\ensuremath{\mathrm{SL}_{#1}}}
\newcommand{\Mp}[1]{\ensuremath{\mathrm{Mp}_{#1}}}
\newcommand{\T}{\ensuremath{\rmt}}
\newcommand{\rT}{\ensuremath{\,{}^\T\!}}
\newcommand{\diag}{\ensuremath{\mathrm{diag}}}
\newcommand{\std}{\ensuremath{\mathrm{std}}}
\newcommand{\sym}{\ensuremath{\mathrm{sym}}}
\newcommand{\bbone}{\ensuremath{\mathds{1}}}
\newcommand{\lspan}{\ensuremath{\mathop{\mathrm{span}}}}
\newcommand{\HS}{\mathbb{H}}
\newcommand{\llangle}{\langle\!\!\langle}
\newcommand{\rrangle}{\rangle\!\!\rangle}
\newcommand{\llparen}{(\!\!(}
\newcommand{\rrparen}{)\!\!)}
\renewcommand{\tbinom}[2]{\genfrac{(}{)}{0pt}{1}{#1}{#2}}
\newcommand{\soc}{\mathrm{soc}}
\newcommand{\Poly}{\mathrm{Poly}}
\newcommand{\para}{\mathrm{pb}}
\newcommand{\Ext}{\mathrm{Ext}}
\newcommand{\Hpara}{\rmH^1_{\para}}
\newcommand{\Extpara}{\Ext^1_\para}
\newcommand{\cusp}{\mathrm{cusp}}
\newcommand{\md}{\mathrm{md}}
\newcommand{\ex}{\mathrm{ex}}
\newcommand{\Ind}{\mathrm{Ind}}
\newcommand{\Res}{\mathrm{Res}}
\newcommand{\semisimple}{\mathrm{ss}}
\newcommand{\reg}{\mathrm{reg}}
\newcommand{\Brown}{\mathrm{Brown}}
\newcommand{\shift}{\mathrm{shift}}
\newcommand{\pxs}{\mathrm{pxs}}
\newcommand{\extb}{\mathop{\overline{\boxplus}}}
\newcommand{\extbd}{\mathop{\overline{\boxplus}^\vee}}
\newcommand{\extbpara}{\mathop{\overline{\boxplus}_\para}}
\newcommand{\extbdpara}{\mathop{\overline{\boxplus}^\vee_\para}}
\newcommand{\sfd}{\mathsf{d}} %
\newcommand{\vvR}{\widetilde{\rmR}} %
\newcommand{\headertitle}{{\normalfont%
  Modular forms of virtually real-arithmetic type I
}}
\newcommand{\headerauthors}{
  M.~H.~Mertens, M.~Raum
}
\begin{document}

\begin{plainfootnotes}
\begin{flushleft}
{\fontfamily{lms}\sffamily
  \hspace{20pt}{\huge%
  Modular forms
  }\\\hspace{20pt}{\huge%
  of virtually real-arithmetic type I
  }
}
\\{\fontfamily{lms}\sffamily
  \hspace{20pt}%
  Mixed mock modular forms yield
  vector-valued modular forms
}
\\[.6em]\hspace{20pt}{\large%
  Michael H.\ Mertens
  \footnote{The research of the first named author is supported by the European Research
Council under the European Union's Seventh Framework Programme
(FP/2007-2013) / ERC Grant agreement n. 335220 - AQSER.}
  and
  Martin Raum%
  \footnote{The second named author was partially supported by Vetenskapsr\aa det Grant~2015-04139.}
}
\\[1.2em]
\end{flushleft}
\end{plainfootnotes}

\thispagestyle{scrplain}

{\small
\noindent
{\tbf Abstract:}
The theory of elliptic modular forms has gained significant momentum from the discovery of relaxed yet well-behaved notions of modularity, such as mock modular forms, higher order modular forms, and iterated Eichler-Shimura integrals. Applications beyond number theory range from combinatorics, geometry, and representation theory to string theory and conformal field theory. We unify these relaxed notions in the framework of vector-valued modular forms by introducing a new class of $\SL{2}(\ZZ)$-representations: virtually real-arithmetic types. The key point of the paper is that virtually real-arith\-metic types are in general not completely reducible. We obtain a rationality result for Fourier and Taylor coefficients of associated modular forms.
\\[.35em]
\textsf{\textbf{%
  vector-valued modular forms%
}}%
\hspace{0.3em}{\tiny$\blacksquare$}\hspace{0.3em}%
\textsf{\textbf{%
  mixed mock modular forms%
}}%
\hspace{0.3em}{\tiny$\blacksquare$}\hspace{0.3em}%
\textsf{\textbf{%
  higher order modular forms%
}}%
\hspace{0.3em}{\tiny$\blacksquare$}\hspace{0.3em}%
\textsf{\textbf{%
  iterated Eichler-Shimura integrals%
}}%
\hspace{0.3em}{\tiny$\blacksquare$}\hspace{0.3em}%
\textsf{\textbf{%
  Hecke operators%
}}%
\hspace{0.3em}{\tiny$\blacksquare$}\hspace{0.3em}%
\textsf{\textbf{%
  Eisenstein series%
}}%
\hspace{0.3em}{\tiny$\blacksquare$}\hspace{0.3em}%
\textsf{\textbf{%
  Poincar\'e series%
}}%
\\[0.15em]
\noindent
\textsf{\textbf{%
  MSC Primary:
  11F11%
}}%
\hspace{0.3em}{\tiny$\blacksquare$}\hspace{0.3em}%
\textsf{\textbf{%
  MSC Secondary:
  11F25, 11F37, 11M32
}}
}

\vspace{-1.5em}
\renewcommand{\contentsname}{}
\setcounter{tocdepth}{2}
\tableofcontents
\vspace{1.5em}

\Needspace*{4em}
\addcontentsline{toc}{section}{Introduction}
\markright{Introduction}
\lettrine[lines=2,nindent=.2em]{\tbf R}{elaxed} notions of elliptic modular forms have developed into at least three distinct research branches: Mixed mock modular forms, higher order modular forms, and iterated Eichler-Shimura integrals. Till now, they have been flourishing largely independently from one another, with exception of some connections among them summarized in the remarks following Theorem~\ref{thm:main:injection-of-modular-forms}. The raison d'\^etre of the present paper is to bring them together under the umbrella of vector-valued modular forms. This, in particular, allows us to transfer insight from one of them to the others.

There are dozens of papers on mock modular forms and mixed mock modular forms, whose modern theory has been developed since~2002~\cite{bringmann-ono-2006, bringmann-ono-2010, zwegers-2002, bruinier-funke-2004, zagier-2009, dabholkar-murthy-zagier-2018}. The range of covered topics includes combinatorics, geometry, representation theory, and string theory~\cite{ono-2009,bringmann-folsom-ono-rolen-2018}. Higher order modular forms, first introduced with this name by Chinta, Diamantis, and O'Sullivan \cite{chinta-diamantis-osullivan-2002} and Kleban and Zagier \cite{kleban-zagier-2003}, have served as a handle on the distribution of modular symbols~\cite{petridis-risager-2004,petridis-risager-2017-preprint} and their connection to the ABC-conjecture~\cite{goldfeld-2002}, and made appearance in conformal field theory~\cite{kleban-zagier-2003,diamantis-kleban-2009}. Recently, iterated Eichler-Shimura integrals have received a modular interpretation analogous to the one of mock modular forms~\cite{brown-2017b-preprint}, and at the same time were equipped with a motivic-geometric interpretation~\cite{brown-2017-preprint,baumard-schneps-2017}.

Elliptic modular form are associated to a discrete subgroup~$\Gamma \subseteq \SL{2}(\RR)$ with finite co-volume. For simplicity, we focus on the case~$\Gamma \subseteq \SL{2}(\ZZ)$. Vector-valued elliptic modular forms are functions on the Poincar\'e upper half plane $\HS$ that transform according to a given finite-dimensional, complex representation of~$\Gamma$. Such a representation~$\rho$ is called an arithmetic type. A modular form~$f$ of weight~$k$ and type~$\rho$ for~$\Gamma \subseteq \SL{2}(\ZZ)$ satisfies

\begin{gather*}
  f\big(\frac{a \tau + b}{c \tau + d}\big)
=
  (c \tau + d)^k \rho(\gamma) f(\tau)
\quad
  \tx{for all }
  \gamma
=
  \begin{psmatrix}
  a & b \\ c & d
  \end{psmatrix}
\in
  \Gamma
  \tx{ and }
  \tau
\in
  \HS
\tx{.}
\end{gather*}
We define a specific class of arithmetic types, called ``virtually real-arithmetic'' types---vra~types for short. Modular forms of vra~type subsume the relaxed notions of modular forms that we listed above in a sense that we make precise in Theorem~\ref{thm:main:injection-of-modular-forms}. Before explaining this connection to vector-valued modular forms in more detail and justify the name of vra~types, we present Theorem~\ref{thm:main:recursion-rationality} to illustrate its bifurcations. To avoid confusion at this point, let us remark that despite the similarity in name virtually real-arithmetic types are actual representations as opposed to virtual representations.

The theory of vector-valued modular forms supplies several new tools to examine mixed mock modular forms, higher order modular forms, and iterated Eichler-Shimura integrals. Two of the more recent papers in that direction, which are a good starting point, are~\cite{candelori-franc-2017,gannon-2014}. We can, for example, describe vector-valued modular forms as sections of suitable holomorphic vector bundles that carry a connection. This allows us to deduce the next rationality and recursion result. We need the following notation:
\begin{enumeratearabic}
\item
Given $k \in \ZZ$, we write $\rmM_k(\Gamma)$, $\rmM_k^!(\Gamma)$, and $\bbM_k(\Gamma)$ for the space of weight~$k$ modular forms, weakly holomorphic modular forms, and mock modular forms with shadow of moderate growth for the group~$\Gamma \subseteq \SL{2}(\ZZ)$. Then $\rmM_l(\Gamma) \otimes \bbM_k(\Gamma)$ is a space of mixed mock modular forms for every $l \in \ZZ$. Section~\ref{ssec:mixed-mock-modular-forms} contains the definitions.
\item 
Given $k \in \ZZ$ and $\sfd \in \ZZ_{\ge 0}$, we write $\rmM^{[\sfd]}_k(\Gamma)$ for the space of order~$\sfd$ modular forms of weight~$k$ for~$\Gamma$. Section~\ref{ssec:higher-order-modular-forms} contains the definition.

\item
Given $\sfd \in \ZZ_{\ge 0}$ and $\ul{k} = (k_1, \ldots, k_\sfd) \in (2 \ZZ)^\sfd$, we write $\rmI\rmM_{\ul{k}}$ for the space of iterated Eichler-Shimura integrals of level~$1$ modular forms of weights~$k_1, \ldots k_\sfd$. Section~\ref{ssec:iterated-integrals} contains the definition.
\end{enumeratearabic}
\begin{maintheorem}
\label{thm:main:recursion-rationality}
Let $f$ be
\begin{enumeratearabic}
\item
a mixed mock modular form $f \in \rmM_l(\Gamma) \otimes \bbM_k(\Gamma)$ with $l \in \ZZ$ and $k \in \ZZ_{\le 0}$,

\item
\label{it:thm:main:recursion-rationality:higher-order}
a higher order modular form $f \in \rmM^{[\sfd]}_k(\Gamma)$ with $k \in \ZZ$ and $\sfd \in \ZZ_{\ge 0}$, or

\item
an iterated Eichler-Shimura integral $f \in \rmI\rmM_{\ul{k}}$ with $\ul{k} \in (2 \ZZ)^\sfd$ and $\sfd \in \ZZ_{\ge 0}$.
\end{enumeratearabic}
Write $c(f;\,n)$, $n \in \QQ_{\ge 0}$ for its Fourier coefficients at some cusp or its Taylor coefficients at some point in the Poincar\'e upper half plane. Then the $c(f;\,n)$ satisfy a family of recursions with coefficients that span a finite-dimensional $\QQ$~vector space. In particular, we have
\begin{gather*}
  \dim_\QQ\,
  \lspan \QQ \big\{ c(f;\, n) \,:\, n \in \QQ_{\ge 0} \big\}
<
  \infty
\tx{.}
\end{gather*}
\end{maintheorem}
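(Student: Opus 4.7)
The plan is to reduce all three cases to a single uniform statement about vector-valued modular forms of virtually real-arithmetic type. By Theorem~\ref{thm:main:injection-of-modular-forms}, each $f$ in (1)--(3) is realised as a distinguished component of a vector-valued modular form $F\in\rmM_k(\Gamma,\rho)$ for some vra~type $\rho$ of finite dimension $d$. Once this uniform setup is in place, both assertions of the theorem are extracted by a single argument.

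The structural input I would exploit is that a vector-valued modular form of type $\rho$ is a holomorphic section of a rank $d$ flat vector bundle $\cV_\rho$ on the modular curve $\Gamma\backslash\HS$, twisted by the automorphy factor of weight~$k$; this is the bundle-with-connection description highlighted in the introduction. Finite rank forces the sections $F,\nabla F,\ldots,\nabla^{d} F$ to be linearly dependent over the ring of local holomorphic functions on any trivializing chart. In a coordinate $q=e^{2\pi i\tau/N}$ at a cusp this dependence becomes a linear differential operator $L=\sum_{j=0}^{d} P_j(q)(q\partial_q)^j$ with holomorphic coefficients annihilating $F$, and hence $f$. Expanding $L f=0$ coefficient-by-coefficient in $q$ produces precisely a family of linear recursions on the $c(f;n)$, parametrised by the entries of $L$ in the chosen frame. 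The Taylor case is handled identically, using a local holomorphic frame of $\cV_\rho$ centered at $\tau_0\in\HS$.

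The rationality statement then rests on the defining feature of vra~types, namely that $\rho$ carries a $\QQ$-structure compatible with the $\Gamma$-action and the weight~$k$ automorphy factor. Concretely, this means one can choose the local frame of $\cV_\rho$ to be $\QQ$-defined, so that the entries of the connection matrix, and hence of the $P_j$, lie in $\QQ[[q]]$ (resp.\ in $\QQ[[\tau-\tau_0]]$), and simultaneously that the components of $F$ in this frame take values in a fixed finite-dimensional $\QQ$-subspace $W\subset\CC$. The recursion then has coefficients drawn from $W$, up to polynomial-in-$n$ factors coming from $q\partial_q$, and the finitely many initial Fourier (resp.\ Taylor) coefficients of $f$ already lie in $W$. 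Iterating the recursion confines every $c(f;n)$ to $W$, yielding the claimed $\QQ$-span bound.

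The main obstacle, I expect, is the second half of the last step: for each of the three classes one must verify that the embedding of Theorem~\ref{thm:main:injection-of-modular-forms} really identifies $f$ with a $\QQ$-defined component of a vva of vra~type. For mixed mock modular forms this amounts to a rationality statement about both the holomorphic part and the shadow, i.e.\ compatibility of the vra~structure with the Eichler integral; for higher order forms it requires compatibility of the order filtration with the $\QQ$-structure on $\rho$, so that the inhomogeneous terms produced when $L$ hits an order-$\sfd$ form land back inside $W$; and for iterated Eichler-Shimura integrals it reduces to rationality of the periods of the inner integrands, in the spirit of Brown. These are three independent compatibilities, but all of them should be encoded in the construction of the embedding itself, so the real work consists in unpacking that construction and tracking $\QQ$-structures through each of the three incarnations.
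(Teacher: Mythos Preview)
Your overall strategy --- embed $f$ via Theorem~\ref{thm:main:injection-of-modular-forms} as a component of a vector-valued modular form and then exploit a linear ODE coming from finite rank --- matches the paper's. But the rationality half of your argument rests on a misconception, and the ``main obstacle'' you identify is a red herring.

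You write that ``the defining feature of vra~types [is] that $\rho$ carries a $\QQ$-structure.'' It is not: a vra~type is one whose semisimplification, after restriction to a finite-index subgroup, extends to $\SL{2}(\RR)$. No $\QQ$-structure is assumed, and none is used in the paper. In fact the paper's Theorem~\ref{thm:recursion-rationality} holds for \emph{arbitrary} arithmetic types, so the rationality cannot come from anything special about vra~types. Your sentence ``the components of $F$ in this frame take values in a fixed finite-dimensional $\QQ$-subspace $W\subset\CC$'' is essentially the conclusion you want, not an input; and the case-by-case verification you sketch in your final paragraph (tracking $\QQ$-structures through Eichler integrals, the order filtration, and Brown's periods) is unnecessary.

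What actually produces rationality in the paper is the specific shape of the ODE. Rather than a generic linear dependence of $F,\nabla F,\ldots,\nabla^d F$ over local holomorphic functions (which would give you coefficients $P_j$ with no rationality control), one uses the \emph{Serre derivative} $\rmD_k = (2\pi i)^{-1}\partial_\tau - \tfrac{k}{12}E_2$, which is covariant $\rmM_k(\rho)\to\rmM_{k+2}(\rho)$. By the free-module theorem of Marks--Mason and Knopp--Mason, $F,\rmD F,\ldots,\rmD^{\dim\rho_F}F$ satisfy a relation $\sum g_j\cdot\rmD^j F=0$ whose coefficients $g_j$ are level-$1$ \emph{modular} forms; expanding $\rmD$ gives $\sum h_j\cdot(\partial_\tau/2\pi i)^j F=0$ with $h_j$ level-$1$ quasimodular forms. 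Since the spaces of level-$1$ quasimodular forms of each weight have $\QQ$-bases (generated by $E_2,E_4,E_6$), the set $\{c(h_j;m):j,m\}$ spans a finite-dimensional $\QQ$-vector space. This is the recursion with $\QQ$-finite coefficients. The finiteness of $\lspan_\QQ\{c(f;n)\}$ then follows once one checks the leading term is nonzero, i.e.\ $c(h_j;0)\neq 0$ for some $j$ (resp.\ $c_{\tau_0}(h_j;0)\neq 0$), which is Knopp--Mason's Theorem~3.14. No case analysis on (1)--(3) is required beyond invoking Theorem~\ref{thm:main:injection-of-modular-forms}.
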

\begin{mainremark}
\begin{enumeratearabic}
\item
A more detailed statement that applies to all modular forms of vra~type is given in Theorem~\ref{thm:recursion-rationality}. In conjunction with Theorem~\ref{thm:main:injection-of-modular-forms}, it implies Theorem~\ref{thm:main:recursion-rationality}.

\item
The statement of Theorem~\ref{thm:main:recursion-rationality} for Fourier coefficients of mock modular forms of non-positive weight can also be inferred from Theorem~1.1 of \cite{guerzhoy-kent-ono-2010}. 

\item
As for Fourier coefficients, Theorem~\ref{thm:main:recursion-rationality} stands in stark contrast to the case of positive weight mock modular forms~\cite{duke-li-2015,ehlen-2017} and half-integral weight mock modular forms that are not mock theta functions~\cite{bruinier-ono-2010,bruinier-stromberg-2012}. For mock theta functions, rationality of Fourier coefficients follows, for example, from their description in terms of indefinite theta series~\cite{zwegers-2002}.

\item
Values at CM-points of mock modular forms in some cases not covered by Theorem~\ref{thm:main:recursion-rationality} were investigated in~\cite{duke-jenkins-2008,alfes-ehlen-2013}.

\item
The recursions for Fourier coefficients of mock theta series in~\cite{imamoglu-raum-richter-2014} were also obtained by employing vector-valued modular forms. The methods used to establish them are of spectral as opposed to geometric nature.
\end{enumeratearabic}
\end{mainremark}

Before proceeding to our discussion of vra~types, we mention one more aspect that we otherwise skip over in this paper. Namely, the theory of vector-valued modular forms has recently received attention from harmonic analysis. M\"uller established a trace formula for non-unitary twists~\cite{muller-2011}, which includes modular forms for our notion of vra~types except that M\"uller restricts to co-compact subgroups of~$\SL{2}(\RR)$. Extensions of this trace formula and the associated continuous spectrum have since then been investigated in several papers~\cite{deitmar-monheim-2016,deitmar-monheim-2016-preprint,fedosova-pohl-2017-preprint}. Theorem~\ref{thm:main:injection-of-modular-forms} can be viewed as a partial description of the holomorphic discrete spectrum for vra~types. Holomorphic modular forms have profited from a combination of geometric and spectral perspectives on them. It is desirable to employ M\"uller's trace formula in order to achieve the same in the case of mixed mock modular forms and iterated Eichler-Shimura integrals. Substantiating such hopes will require a serious effort, which we do not pursue in the present paper.

Consider the connection between classical modular forms for $\Gamma \subseteq \SL{2}(\ZZ)$ and vector-valued modular forms for~$\SL{2}(\ZZ)$. Starting with a scalar-valued modular form~$f \in \rmM_k(\Gamma)$, one constructs a function whose components are $f |_k\, \gamma$ with $\gamma$ running through a system of representatives of $\Gamma \backslash \SL{2}(\ZZ)$. The resulting modular form~$\Ind\,f$ transforms as the induced representation~$\Ind_\Gamma^{\SL{2}(\ZZ)}\,\bbone$, where $\bbone$ denotes the trivial arithmetic type for~$\Gamma$. This concept was explained in detail in~\cite{raum-2017}. It allows us to establish a bijection
\begin{gather}
\label{eq:intro:induction-map}
  \Ind :\,
  \rmM_k(\Gamma)
\lra
  \rmM_k\big( \Ind_\Gamma^{\SL{2}(\ZZ)}\,\bbone \big)
\tx{,}\quad
  f
\lmto
  \Ind\,f
\tx{.}
\end{gather}
The fact that the inverse map from $\rmM_k(\Ind_\Gamma^{\SL{2}(\ZZ)}\,\bbone)$ to~$\rmM_k(\Gamma)$ is merely the projection to one of the coordinates of~$\Ind_\Gamma^{\SL{2}(\ZZ)}\,\bbone$ means that vector-valued modular forms recover the theory of scalar-valued modular forms, but potentially expose more properties.

The bijection in~\eqref{eq:intro:induction-map} generalizes straightforwardly to the case of modular forms for Dirichlet characters~$\chi \pmod{N}$. We can view~$\chi$ as a one-dimensional representation of the congruence subgroup~$\Gamma_0(N) = \{ \begin{psmatrix} a & b \\ c & d \end{psmatrix} \in \SL{2}(\ZZ) \,:\, c \equiv 0 \pmod{N} \}$. The associated arithmetic type is~$\Ind_{\Gamma_0(N)}^{\SL{2}(\ZZ)}\,\chi$. Observe that the restrictions, if~$\Gamma$ for instance is normal,
\begin{gather*}
  \Res_\Gamma^{\SL{2}(\ZZ)}\,
  \Ind_\Gamma^{\SL{2}(\ZZ)}\,
  \bbone
\;\cong\;
  \bigoplus_{\Gamma \backslash \SL{2}(\ZZ)} \bbone
\tx{,}\quad
  \Res_{\Gamma(N)}^{\SL{2}(\ZZ)}\,
  \Ind_{\Gamma_0(N)}^{\SL{2}(\ZZ)}\,
  \chi
\;\cong\;
  \bigoplus_{\Gamma_0(N) \backslash \SL{2}(\ZZ)} \bbone
\end{gather*}
are isotrivial, that is, they are copies of the trivial representation of~$\Gamma$. Recall the terminology from, say, group theory that a property is said to hold virtually if it holds after passing to a finite index subgroup. We can therefore rephrase the bijection in~\eqref{eq:intro:induction-map}: The classical theory of modular forms is recovered by vector-valued modular forms of virtually isotrivial arithmetic type for~$\SL{2}(\ZZ)$.

To clarify the particular use of vector-valued modular forms as opposed to mere scalar valued ones, we reexamine~\eqref{eq:intro:induction-map} from the perspective of Fourier expansions. Given the Fourier expansion of a modular form on the right-hand side of~\eqref{eq:intro:induction-map}, it is easy to pass to the left-hand side by picking the component of $\Ind\,f$ that corresponds to the trivial coset in $\Gamma \backslash \SL{2}(\ZZ)$. Passage from the left to the right-hand side is intricate, if the Fourier expansion of $f$ is only given at the cusp~$\infty$, as is commonly the case. Indeed, the Fourier expansion at~$\infty$ of all $f \big|_k\, \gamma$ depends on Fourier expansions of~$f$ at all cusps. They are generally difficult to determine from the one at infinity. In~\cite{raum-2017}, one of the authors illustrated how to obtain them by employing the vector-valued point of view on modular forms. Other applications, in which the vector-valued point of view is decisive can, for instance, be found in~\cite{scheithauer-2009,carnahan-2012}.

When examining vector-valued modular forms, it is natural to consider the standard representation~$\std$ of~$\SL{2}(\ZZ)$ and its symmetric powers~$\sym^\sfd$. They are the only irreducible arithmetic types that extend to~$\SL{2}(\RR)$. Kuga and Shimura~\cite{kuga-shimura-1960} studied this case and offered a description of modular forms of type~$\sym^\sfd$. They concluded under certain assumptions on $k$ and $\sfd$ that
\begin{gather*}
  \rmM_k(\sym^\sfd)
\;\cong\;
  \bigoplus_{\substack{j = -\sfd \\ j\equiv \sfd\pmod 2}}^\sfd
  \rmM_{k+j}\big( \SL{2}(\ZZ) \big)
\tx{.}
\end{gather*}
The isomorphism is given by a linear holomorphic differential operator that is compatible with induction from subgroups. We can therefore extend our previous statement from virtually isotrivial arithmetic types to arithmetic types that virtually extend to~$\SL{2}(\RR)$. Modular forms of such type can be described in terms of classical ones.

Induction from finite index subgroups and the isomorphism due to Kuga-Shimura together exhaust the available connections between scalar- and vector-valued modular forms. Note that all arithmetic types discussed until now are completely reducible. The key point of this paper is to consider vector-valued modular forms of types that are not completely reducible.

We say that an arithmetic type is real-arithmetic if its semi-simplification extends to $\SL{2}(\RR)$. It is virtually real-arithmetic (vra) if its restriction to some finite index subgroup of~$\SL{2}(\ZZ)$ is real-arithmetic. An explicit non-trivial example of a vra~type can be found in Examples~\ref{ex:vra-types} and~\ref{ex:vra-type-delta}.

Our main theorem states that mixed mock modular forms, higher order modular forms, and iterated Eichler-Shimura integrals are components of modular forms of vra~type. We use the notation established before Theorem~\ref{thm:main:recursion-rationality}.
\begin{maintheorem}
\label{thm:main:injection-of-modular-forms}
We have the following injections into spaces of modular forms of virtually real-arithmetic type:
\begin{enumeratearabic}
\item
\label{it:thm:main:injection-of-modular-forms:mock}
For $l \in \ZZ$ and $k \in \ZZ_{\le 0}$, there is an injection
\begin{gather*}
  \rmM_l^!(\Gamma)
  \otimes
  \bbM_k(\Gamma)
\lhra
  \rmM^!_{k+l}(\rho)
\end{gather*}
for a vra~type~$\rho$ that only depends on~$k$ and~$\Gamma$. There is a retract that is a coordinate projection. Details are given in Theorem~\ref{thm:mock-modular-as-co-universal-type-modular-form} and Corollaries~\ref{cor:mock-modular-as-universal-type-modular-form} and~\ref{cor:mixed-mock-modular-as-universal-type-modular-form}.

\item
\label{it:thm:main:injection-of-modular-forms:higher-order}
Given $\sfd \in \ZZ_{\ge 0}$, there is an injection
\begin{gather*}
  \rmM_k^{[\sfd]}(\Gamma)
\lhra
  \rmM_k(\rho)
\end{gather*}
for a vra~type~$\rho$ that only depends on~$\sfd$ and~$\Gamma$. There is a retract that is a coordinate projection. Details are given in Proposition~\ref{prop:higher-order-modular-forms}.

\item
\label{it:thm:main:injection-of-modular-forms:iterated-integrals}
Given $\sfd \in \ZZ_{\ge 0}$ and $\ul{k} \in \ZZ^\sfd$ there is an injection
\begin{gather*}
  \rmI\rmM_{\ul{k}}
\lhra
  \rmM_0(\rho)
\end{gather*}
for a vra~type~$\rho$ that only depends on~$\ul{k}$. Details, including a substitute for the retracts in case~\ref{it:thm:main:injection-of-modular-forms:mock} and \ref{it:thm:main:injection-of-modular-forms:higher-order}, are given in Proposition~\ref{prop:iterated-integrals-modular}.
\end{enumeratearabic}
\end{maintheorem}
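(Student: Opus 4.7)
The plan is to treat all three cases uniformly by a common template. For each relaxed notion of modular form, construct an explicit arithmetic type $\rho$ of $\SL{2}(\ZZ)$ as an extension whose semisimplification virtually extends to $\SL{2}(\RR)$, so that $\rho$ is automatically vra, and then promote the given relaxed modular form to a $\rho$-valued vector-valued modular form, with the relaxed form recovered as a designated coordinate.

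For part~\ref{it:thm:main:injection-of-modular-forms:mock}, start with a mock modular form $h \in \bbM_k(\Gamma)$ with shadow $g \in \rmM_{2-k}(\Gamma)$, $k \le 0$. The slash-defect $h \big|_k (\gamma - 1)$, $\gamma \in \Gamma$, is an Eichler period polynomial of $g$, a $1$-cocycle with values in $\sym^{-k}$. This cocycle defines an extension of arithmetic types for $\Gamma$ with sub-object $\sym^{-k}$ and quotient $\bbone$; induction via $\Ind_\Gamma^{\SL{2}(\ZZ)}$ yields a type $\rho$ of $\SL{2}(\ZZ)$ whose semisimplification $\Ind_\Gamma^{\SL{2}(\ZZ)}(\sym^{-k} \oplus \bbone)$ virtually extends to $\SL{2}(\RR)$, so $\rho$ is vra. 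The assignment sending $h$ to the vector whose entries are $h$ and the coefficients of its period polynomial data is a $\rho$-valued modular form, and projection onto the $h$-coordinate is the retract. For $f \otimes h \in \rmM_l^!(\Gamma) \otimes \bbM_k(\Gamma)$, tensor $\rho$ with the virtually isotrivial arithmetic type attached to $\rmM_l^!(\Gamma)$ through~\eqref{eq:intro:induction-map} to treat the mixed case.

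For part~\ref{it:thm:main:injection-of-modular-forms:higher-order}, by definition $f \in \rmM_k^{[\sfd]}(\Gamma)$ is annihilated by $I^{\sfd+1}$, the $(\sfd+1)$-st power of the augmentation ideal $I \subseteq \CC[\Gamma]$. The quotient $V_\sfd := \CC[\Gamma] \,/\, I^{\sfd+1}$ is finite-dimensional (since $\Gamma$ has finite index in $\SL{2}(\ZZ)$, hence is finitely generated, so $\Gamma^{\mathrm{ab}} \otimes \CC$ is finite-dimensional) and, as a $\Gamma$-module, has only trivial composition factors; inducing $V_\sfd^\vee$ up to $\SL{2}(\ZZ)$ produces a virtually isotrivial, hence vra, type $\rho$, and the map $f \mapsto (f \big|_k w)_w$ over a basis of $V_\sfd$ realizes the injection with coordinate projection onto the class of the identity as retract. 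For part~\ref{it:thm:main:injection-of-modular-forms:iterated-integrals}, an iterated Eichler-Shimura integral of weights $\ul{k} = (k_1, \ldots, k_\sfd)$ transforms under $\SL{2}(\ZZ)$ with a cocycle whose values lie in tensor products of symmetric powers of the standard representation, organized by iteration depth; collecting all lower-depth iterated integrals arising as correction terms into a single vector promotes the transformation law to a representation $\rho$ filtered so that each graded piece is such a tensor product, and since every $\sym^j$ extends to $\SL{2}(\RR)$, $\rho$ is real-arithmetic and hence vra.

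The main obstacle, in parts~\ref{it:thm:main:injection-of-modular-forms:mock} and~\ref{it:thm:main:injection-of-modular-forms:iterated-integrals}, is to verify that the transformation defects genuinely assemble into group representations as opposed to mere $1$-cocycles, and to track the resulting extension classes through the induction functor. For mock modular forms this reduces to the cocycle property of the Eichler period polynomial, which is classical. For iterated integrals it requires controlling the shuffle-type combinatorics of nested periods and showing the filtration is stable under $\SL{2}(\ZZ)$. Part~\ref{it:thm:main:injection-of-modular-forms:higher-order} is free of this issue since the representation is manifestly given by the algebra structure of $\CC[\Gamma]/I^{\sfd+1}$, and its only subtlety is the finite-dimensionality of $V_\sfd$. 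Once the representations are in place, the retracts in~\ref{it:thm:main:injection-of-modular-forms:mock} and~\ref{it:thm:main:injection-of-modular-forms:higher-order} are evident coordinate projections, while the substitute in~\ref{it:thm:main:injection-of-modular-forms:iterated-integrals} should be a projection onto the bottom layer of the filtration rather than onto a single coordinate.
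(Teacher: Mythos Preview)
Your sketch for part~\ref{it:thm:main:injection-of-modular-forms:higher-order} is correct and, while phrased differently from the paper, is essentially equivalent. The paper builds the type $\bbone^{[\sfd]}$ inductively via $\bbone \hookrightarrow \bbone^{[\sfd]} \twoheadrightarrow \bbone^{[\sfd-1]} \otimes \rmH^1(\Gamma,\bbone)$; your $V_\sfd = \CC[\Gamma]/I^{\sfd+1}$ with the regular action is another model of a type with isotrivial composition factors, and your map $f \mapsto (f|_k w)_w$ is the same section, written in terms of the group algebra rather than cocycles. Both work.

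Part~\ref{it:thm:main:injection-of-modular-forms:mock} has two genuine problems. First, the extension is oriented the wrong way. In the extension $\rho_\iota \hookrightarrow \rho \twoheadrightarrow \rho_\pi$, it is the $\rho_\iota$-component that picks up the defect $\varphi(\gamma)\cdot(\rho_\pi\text{-component})$ under the slash action, while the $\rho_\pi$-component must be genuinely modular. For $k<0$ there are no nonzero weight-$k$ modular forms for the trivial type, so if your quotient is $\bbone$ the defect vanishes and you never see $h$. The paper instead takes $\bbone$ as the sub-object (holding $h$) and $\sym^{|k|}$ as the quotient, with the quotient component equal to the \emph{invariant} polynomial $(X-\tau)^{|k|}$, not ``period polynomial data''. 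The period cocycle $\varphi_h$ enters only through the extension class of $\rho$, never as an entry of the vector. Second, and more seriously, your type depends on $h$ through $\varphi_h$, whereas the theorem asserts a single $\rho$ depending only on $k$ and $\Gamma$. The paper achieves this by passing to the co-universal parabolic extension $\bbone \extbdpara \sym^{|k|}$ (respectively the universal one $\extbpara$), which absorbs the entire space $\Hpara(\Gamma,\sym^{|k|})$ into the representation; the map then reads $h \mapsto h \otimes \varphi_h \,\boxplus\, (-1)(X-\tau)^{|k|}$.

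For part~\ref{it:thm:main:injection-of-modular-forms:iterated-integrals} your outline is in the right direction but stops short of the actual content. The paper does not carry out the ``shuffle-type combinatorics'' you allude to; it quotes Brown's cocycle $\varphi$ for the generating series $I^\reg_\Brown$ with values in the algebra $\CC\llangle \rmM_\bullet^\vee[\sym^\bullet]\rrangle$, then passes to the finite-dimensional quotient by the ideals $\CC\llangle \rmM_\bullet^\vee[\sym^\bullet]\rrangle_{>\ul{k}} + \CC\llangle \rmM_\bullet^\vee[\sym^\bullet]\rrangle_{\not\equiv \ul{k}}$. The substitute for the retract is the projection onto the top graded piece $\Hom(\rmM_{k_1}\otimes\cdots\otimes\rmM_{k_\sfd},\Poly(\ul{X},\ul{k}-2))$, not the bottom layer.
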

\begin{mainremark}
\begin{enumeratearabic}
\item
The representations~$\rho$ in Theorem~\ref{thm:main:injection-of-modular-forms} are not necessarily unique. For example, for mixed mock modular forms we provide a whole family of embeddings.

\item
The restriction to non-positive, integral weight in part~\ref{it:thm:main:injection-of-modular-forms:mock} of Theorem~\ref{thm:main:injection-of-modular-forms} excludes all mock theta functions, treated in~\cite{zwegers-2002}. It is possible to subsume them under a formally similar notion. This, however, would require the use of infinite-dimensional representations, namely holomorphic representations for the metaplectic double cover of~$\SL{2}(\RR)$.

\item
Case~\ref{it:thm:main:injection-of-modular-forms:mock} in conjunction with case~\ref{it:thm:main:injection-of-modular-forms:higher-order} of Theorem~\ref{thm:main:injection-of-modular-forms} suggests that there is a relation between second order modular forms and mixed mock mock modular forms whose mock-part has weight~$0$. Such a connection has been previously revealed in~\cite{bringmann-kane-2013}.

\item
Connections between mock modular forms and classical Eichler integrals are well-es\-tab\-lished~\cite{guerzhoy-2008, bringmann-guerzhoy-kent-ono-2013}. These, however, are of a somewhat different flavor than the one suggested by Cases~\ref{it:thm:main:injection-of-modular-forms:mock} and~\ref{it:thm:main:injection-of-modular-forms:iterated-integrals} of Theorem~\ref{thm:main:injection-of-modular-forms}.

\item
Case~\ref{it:thm:main:recursion-rationality:higher-order} in conjunction with case~\ref{it:thm:main:injection-of-modular-forms:iterated-integrals} of Theorem~\ref{thm:main:injection-of-modular-forms} suggests that there is a relation between higher order modular forms and iterated integrals. Such a connection has been previously revealed in~\cite{diamantis-sreekantan-2006}.

\item
Apparently unaware of either~\cite{bringmann-kane-2013} or its rephrasing in terms of vector-valued modular forms, the authors of~\cite{candelori-harland-marks-yepez-2017-preprint} reproduce~\cite{bringmann-kane-2013} in their Theorem~1.4. It is also a special case of our Proposition~\ref{prop:higher-order-modular-forms}.

\item
Indecomposability for ``analytic types'' has been previously observed, studied, and employed in~\cite{schulze-pillot-2011,bringmann-kudla-2016-preprint,raum-2018} in the context of harmonic weak Maa\ss\ forms. Since mock modular forms are tied to harmonic weak Maa\ss\ forms via a procedure called modular completion, Theorem~\ref{thm:main:injection-of-modular-forms} may be interpreted as evidence that analytic and arithmetic indecomposability are connected to each other.
\end{enumeratearabic}
\end{mainremark}

The proof of Theorem~\ref{thm:main:injection-of-modular-forms} is contained in Section~\ref{sec:other-notions-of-modular-forms}. It is preceded by a general treatment of vra~types in Section~\ref{sec:virtually-real-arithmetic-types}. We then proceed to Section~\ref{sec:existence-of-modular-forms}, in which we establish existence of modular forms of vra~type if the weight is sufficiently large. We first give a cohomological argument in complete analogy with the one in~\cite{bruinier-funke-2004}, which concerned harmonic weak Maa\ss\ forms. Second, in Section~\ref{ssec:eisenstein-poincare-series}, we give an argument via Poincar\'e series and provide an explicit lower bound on the weight required for their convergence.

In the forthcoming part~II of this work, we plan to focus on computational applications of our results, in particular exploiting the module structure of modular forms of vra~type.

\paragraph{Acknowledgment}

The second author is grateful to Steve Kudla for mentioning to him the Kuga-Shimura paper at the 2016 Oberwolfach conference on modular forms, which later inspired the notion of vra~types. Both authors are grateful to Morten Risager for his comments on an earlier version of the manuscript. They also wish to express their gratitude to the Max-Planck-Institute for Mathematics in Bonn. The authors thank the referee for a thorough reading which uncovered some glitches in an earlier version of the manuscript.

\section{Preliminaries}

We summarize notation and preliminaries on the modular group, arithmetic types, and modular transformations. We also revisit required terminology from representation theory and the theory of harmonic weak Maa\ss\ forms.

\subsection{Upper half space and modular group}

The set
\begin{gather*}
  \HS
:=
  \big\{
  \tau \in \CC \,:\,
  y = \Im\tau > 0
  \big\}
\end{gather*}
is called the Poincar\'e upper half space. It carries an action of $\SL{2}(\RR)$ via M\"obius transformations
\begin{gather*}
  g \tau
=
  \frac{a \tau + b}{c \tau + d}
\tx{,}\quad
  g
=
  \begin{psmatrix} a & b \\ c & d \end{psmatrix}
\tx{.}
\end{gather*}
We let $S = \begin{psmatrix} 0 & -1 \\ 1 & 0 \end{psmatrix}$ and $T = \begin{psmatrix} 1 & 1 \\ 0 & 1 \end{psmatrix}$. Throughout, we use the notation
\begin{gather*}
  \Gamma_\infty
:=
  \big\{
  \begin{psmatrix} a & b \\ 0 & d \end{psmatrix} \in \SL{2}(\ZZ)
  \big\}
\tx{,}
\qquad
  \Gamma(N)
:=
  \big\{
  \begin{psmatrix} a & b \\ c & d \end{psmatrix} \in \SL{2}(\ZZ)
  \,:\,
  a,d \equiv 1 \pmod{N},\, b,c \equiv 0 \pmod{N}
  \big\}
\tx{.}
\end{gather*}

\subsection{Weights and slash actions}

For the purpose of this paper, we identify weights~$k \in \ZZ$ with one-dimensional representations 
\begin{gather*}
  \sigma_k(g)
:=
  g^k
\tx{,}\quad
  g \in \GL{1}(\CC)
\tx{.}
\end{gather*}
For any weight~$k$, we obtain a slash action
\begin{gather}
\label{eq:def:slash-action}
  \big(f \big|_k g\big)(\tau)
:=
  \sigma_k\big((c \tau + d)^{-1}\big)\, f(g \tau)
=
  (c \tau + d)^{-k}\, f(g\tau)
\tx{,}\quad
  g \in \SL{2}(\RR)
\end{gather}
on functions $f :\, \HS \ra \CC$.

\subsection{Arithmetic types}

We call a finite-dimensional, complex representation of some finite index subgroup~$\Gamma \subseteq \SL{2}(\ZZ)$ an arithmetic type. Given an arithmetic type~$\rho$, we typically denote its representation space by~$V(\rho)$. In special cases we identify a representation and its representation space.

Arithmetic types~$\rho$ in conjunction with weights~$k$ yield further slash actions on functions $f:\bbH\to V(\rho)$, 
\begin{gather}
  \big( f \big|_{k,\rho}\, \gamma \big)(\tau)
:=
  \rho(\gamma^{-1})
  \sigma_k\big((c \tau + d)^{-1}\big)\, f(\gamma \tau)
\tx{,}\quad
  \gamma \in \Gamma
\tx{.}
\end{gather}
Observe that we have to restrict to $\gamma \in \Gamma$, since $\rho(\gamma^{-1})$ is not defined for more general~$\gamma$.

We write $\bbone$ for the trivial arithmetic type. The dual of an arithmetic type~$\rho$ is denoted by~$\rho^\vee$. The evaluation $w^\vee(v)$ of $v \in V(\rho)$ and $w^\vee \in V(\rho^\vee)$ will occasionally be written as~$\langle v, w^\vee \rangle$.

\subsection{Symmetric powers}
\label{ssec:symmetric-powers}

Complex, irreducible, finite-dimensional representations of $\SL{2}(\RR)$ are exhausted by the symmetric powers~$\sym^{\sfd}$ of the standard representation~$\std$. Models for $\sym^{\sfd}$ are provided by the spaces~$\Poly(X, \sfd)$ of polynomials in~$X$ of degree at most~$\sfd$ with complex coefficients via
\begin{gather}
  \sym^\sfd(\gamma) p(X)
:=
  p(X) \big|_{-\sfd} \gamma^{-1}
=
  (-c X + a)^\sfd\,
  p\Big( \frac{d X - b}{-c X + a} \Big)
\tx{.}
\end{gather}
Observe that using this model, the slash action of type~$\sym^\sfd$ takes the simple form:
\begin{gather*}
  \big( f \big|_{k, \sym^\sfd}\, \gamma \big) (X; \tau)
=
  (c X + d)^{\sfd}
  (c \tau + d)^{-k}\,
  f\Big( \frac{a X + b}{c X + d},\, \frac{a \tau + b}{c \tau + d} \Big)
\tx{.}
\end{gather*}

We record that the trivial subrepresentation of $\Poly(X,\sfd) \otimes \Poly(Y,\sfd)$ is spanned by
\begin{gather*}
  (X - Y)^\sfd
\;=\;
  \sum_{i = 0}^\sfd
  \tbinom{\sfd}{i}  X^i (-Y)^{\sfd-i}
\tx{.}
\end{gather*}
In particular, we obtain an isomorphism
\begin{gather}
\label{eq:sym-self-duality}
  \Poly(X,\sfd)^\vee
\lra
  \Poly(Y,\sfd)
\tx{,}\quad
  X^{i\,\vee}
\lmto
  \tbinom{\sfd}{i}
  (-Y)^{\sfd-i}
\tx{.}
\end{gather}
It can be used to define the $\SL{2}(\RR)$-invariant pairing
\begin{gather}
\label{eq:def:sym-power-pairing}
  \langle\,\cdot\,,\,\cdot\,\rangle :\,
  \Poly(X, \sfd) \otimes \Poly(Y, \sfd)
\lra
  \CC
\tx{,}\quad
  \langle X^i, Y^j \rangle
\;:=\;
  \begin{cases}
  (-1)^j \big\slash \tbinom{\sfd}{i}
  \tx{,} & \tx{if $i + j = \sfd$;}
  \\
  0\tx{,} & \tx{otherwise.}
  \end{cases}
\end{gather}
We will identify~$\Poly(X,\sfd)$ and its dual via this isomorphism without further mentioning it. Note also that for $p \in \Poly(X,\sfd)$, we have 
\begin{gather}
\label{eq:sym-power-pairing-variable-substitution}
  \big\langle p(X),\, (Z - Y)^\sfd \big\rangle
=
  p(Z)
\tx{.}
\end{gather}

The Clebsch-Gordan rules assert that there is an embedding
\begin{gather}
\label{eq:def:sym-power-product-embedding}
  \Poly(Z, \sfd + \sfd')
\lhra
  \Poly(X, \sfd)
  \otimes
  \Poly(Y, \sfd')
\tx{,}\quad
  Z^n
\lmto
  \sum_{\substack{0 \le i \le \sfd \\ 0 \le j \le \sfd' \\ i+j = n}}
  \tbinom{\sfd}{i}
  \tbinom{\sfd'}{j}
  \tbinom{\sfd+\sfd'}{n}^{-1}\,
  X^i Y^j
\tx{.}
\end{gather}
The formula can be found by starting with the canonical projection from~$\Poly(X, \sfd) \otimes \Poly(Y, \sfd')$ onto~$\Poly(Z, \sfd + \sfd')$, mapping both $X$ and~$Y$ to~$Z$. Dualizing this surjection and applying the self-duality isomorphism in~\eqref{eq:sym-self-duality} yields~\eqref{eq:def:sym-power-product-embedding}.

\subsection{Restriction and induction of arithmetic types}
\label{ssec:induction-of-types}

The restriction of an arithmetic type from~$\Gamma$ to $\Gamma' \subseteq \Gamma$ is denoted by~$\Res_{\Gamma'}^\Gamma\,\rho$.

Induction of arithmetic types allows us to focus on the case $\Gamma = \SL{2}(\ZZ)$, if we wish. A short exposition is contained in Section~2 of~\cite{raum-2017}. We fix $\Gamma' \subseteq \Gamma \subseteq \SL{2}(\ZZ)$ and an arithmetic type~$\rho$ for~$\Gamma'$. Then for a fixed set~$B$ of representatives of $\Gamma' \backslash \Gamma$ we set
\begin{gather*}
  V\big( \Ind_{\Gamma'}^{\Gamma}\, \rho \big)
:=
  V(\rho) \otimes \CC[B]
\tx{,}\quad
  \Ind\, \rho(\gamma) (v \otimes \frake_\beta)
:=
  \rho\big( I_\beta^{-1}(\gamma^{-1}) \big)v
  \otimes
  \frake_{\overline{\beta \gamma^{-1}}}
\tx{,}
\end{gather*}
where $I_\beta(\gamma) \ov{\beta \gamma} = \beta \gamma$ with $I_\beta(\gamma) \in \Gamma'$ and $\ov{\beta \gamma} \in B$. The space of modular forms for $\rho$ and $\Gamma'$ is isomorphic to the space of modular forms for~$\Ind\,\rho$ and $\Gamma$. 

Recall that there is a canonical inclusion
\begin{gather}
\label{eq:inclusion-induction-restriction}
  \rho
\lhra
  \Ind_{\Gamma'}^{\Gamma}\;
  \Res_{\Gamma'}^{\Gamma}\,
  \rho
\end{gather}
for all finite index subgroups~$\Gamma' \subseteq \Gamma$ and arithmetic types~$\rho$ for~$\Gamma$.

\subsection{Socle series}
\label{ssec:socle-series}

Recall that the socle $\soc(\rho)$ of a representation~$\rho$ is the intersection of its essential submodules. The socle series of a representation is $\soc^0(\rho) \subset \cdots \subset \soc^j(\rho)$, where $j$ is called the socle length of $\rho$, and
\begin{enumerateroman}
\item $\soc^0(\rho) = \{ 0 \}$,
\item $\soc^i(\rho) \big\slash \soc^{i-1}(\rho) \;=\; \soc\big( \rho \big\slash \soc^{i-1}(\rho) \big)$ for all $1 \le i \le j$.
\end{enumerateroman}
We call
\begin{gather*}
  \soc^{i-1}(\rho)
\lhra
  \soc^i(\rho)
\lthra
  \soc^i(\rho) \big\slash \soc^{i-1}(\rho)
\end{gather*}
the $i$\thdash\ socle extension of~$\rho$. The semi-simplification of~$\rho$ is denoted by
\begin{gather*}
  \rho^\semisimple
\;:=\;
  \bigoplus_{i=1}^j \soc^i(\rho) \big\slash \soc^{i-1}(\rho)
\tx{.}
\end{gather*}

Aligning with the common notion of depth and order for mock modular forms and higher order modular forms, respectively, we say that $\rho$ has depth~$j-1$ if it has socle length~$j$.

By slight abuse of terminology, we say that the vector space homomorphism~$\rho(\gamma)$ has socle length $j$, i.e.\ depth~$j-1$, if the $\ZZ$-re\-pre\-sen\-ta\-tion $i \mto \rho(\gamma)^i$ has socle length~$j$. Similarly, we say that $\rho(\gamma)$ is unitarizable if $i \mto \rho(\gamma)^i$ is unitarizable, i.e.\ $\rho(\gamma)$ is diagonizable with eigenvalues of absolute value~$1$.

\subsection{Extensions of arithmetic types}

Given arithmetic types~$\rho$ and $\rho'$ for~$\Gamma \subseteq \SL{2}(\ZZ)$, the space of extension classes for exact sequences
\begin{gather*}
  \rho \lhra \rho'' \lthra \rho'
\end{gather*}
is denoted by $\Ext^1(\rho', \rho)$, suppressing $\Gamma$ from the notation. The set of parabolic extension classes is denoted by $\Extpara(\rho', \rho) \subseteq \Ext^1(\rho', \rho)$. Observe that we have
\begin{gather*}
  \Ext^1\big(\rho', \rho\big)
\;\cong\;
  \rmH^1\big( \Gamma,\, \rho^{\prime\,\vee} \otimes \rho \big)
\quad\tx{and}\quad
  \Extpara\big(\rho', \rho\big)
\;\cong\;
  \Hpara\big( \Gamma,\, \rho^{\prime\,\vee} \otimes \rho \big)
\tx{.}
\end{gather*}
In particular, parabolic extension classes correspond to parabolic cocycles in the first isomorphism, which can be computed using modular forms~\cite{pasol-popa-2013}.

Given $\varphi \in \Ext^1(\rho', \rho)$ , we denote by $\rho \boxplus_\varphi \rho'$ the extension that corresponds to $\varphi$, fitting into the short exact sequence
\begin{gather*}
  \rho \lhra \rho \boxplus_\varphi \rho' \lthra \rho'
\tx{.}
\end{gather*}
Specifically, we have
\begin{gather*}
  \big( \rho \boxplus_\varphi \rho' \big)(\gamma)\,
  \big(v \boxplus v'\big)
=
  \big( \rho(\gamma) v + \varphi(\gamma) v' \big)
  \,\boxplus\,
  \rho'(\gamma) v'
\tx{.}
\end{gather*}

An arithmetic type is called parabolic if all its socle extensions are parabolic. Given a cocycle $\varphi \in \rmH^1(\Gamma, \rho)$ or an extension class~$\varphi \in \Ext^1(\rho',\rho)$, we let
\begin{gather}
  \pxs(\varphi)
:=
  \begin{cases}
  0\tx{,} & \tx{if $\varphi$ is parabolic;}
  \\
  1\tx{,} & \tx{otherwise;}
  \end{cases}
\end{gather}
be its parabolic excess. I.e.\ the parabolic excess of the socle extension classes of a parabolic arithmetic type is~$0$.

\subsection{Function spaces with slash actions}

We write $\cC^\infty(\HS)$ for the space of smooth functions on $\HS$ that takes values in $\CC$. Generalizing this notation, we write $\cC^\infty(\HS \ra V)$ for the space of smooth functions on $\HS$ that take values in a complex, finite-dimensional vector space $V$. Given a weight~$k \in \ZZ$ and an arithmetic type~$\rho$, we let $\cC^\infty_k(\rho) := \cC^\infty(\HS \ra V(\rho))$ and equip it with the $\Gamma$-slash action of weight~$k$ and type~$\rho$.

We say that $f \in \cC^\infty_k(\rho)$ has moderate growth if there is some $a \in \RR$ such that for every $\gamma \in \SL{2}(\ZZ)$ we have
\begin{gather*}
  \big( f \big|_k\, \gamma \big) (\tau)
=
  \cO(y^a)
\quad
  \tx{as $y \ra \infty$.}
\end{gather*}
The space of such functions is denoted by $\cC^\infty_k(\rho)^\md$. As we restrict to finite-dimensional arithmetic types, this definition is independent of a choice of norm on $V(\rho)$. 

The spaces~$\cC^\infty_k(\rho)^\ex$ and~$\cC^\infty_k(\rho)^\cusp$ of smooth functions of at most exponential growth and smooth cuspidal functions are defined analogously by the conditions
\begin{gather*}
  \big( f \big|_k\, \gamma \big) (\tau)
=
  \cO(\exp(ay))
\quad
  \tx{as $y \ra \infty$}
\end{gather*}
and
\begin{gather*}
  \big( f \big|_k\, \gamma \big) (\tau)
\lra
  0
\quad
  \tx{as $y \ra \infty$,}
\end{gather*}
respectively.

We write $\cH_k(\rho)$, $\cH_k(\rho)^\md$, $\cH_k(\rho)^\ex$, and $\cH_k(\rho)^\cusp$ for the corresponding spaces of holomorphic functions.

\subsection{Cocycles attached to functions}
\label{ssec:cocycles-attached-to-functions}

Given~$f \in \cC^\infty_k(\rho)$ for $k \in \ZZ$ and an arithmetic type~$\rho$, the function
\begin{gather}
\label{eq:def:f-cocycle-trival-type}
  \varphi_f(\gamma)
:=
  f \big|_k\, (\gamma-1)
\tx{,}\quad
  \gamma \in \SL{2}(\ZZ)
\end{gather}
is a $1$-cocycle, which a priori takes values in $\cC^\infty_k(\rho)$.

\subsection{Modular forms}

Let $k \in \ZZ$ and fix an arithmetic type~$\rho$ for~$\Gamma \subseteq \SL{2}(\ZZ)$. A function~$f \in \cH_k(\rho)^\ex$ is called a weakly holomorphic modular form of weight~$k$ and (arithmetic) type~$\rho$ if $f \big|_{k,\rho}\, \gamma = f$ for all $\gamma \in \Gamma$. If $f \in \cH_k(\rho)^\md$ it is called a modular form. If $f \in \cH_k(\rho)^\cusp$ it is called a cusp form.

The spaces of such weakly holomorphic modular forms, modular forms, and cusp forms are denoted by $\rmM^!_k(\rho)$, $\rmM_k(\rho)$, and $\rmS_k(\rho)$, respectively. If $\rho$ is the trivial type~$\bbone$, we occasionally suppress it from the notation or replace it by~$\Gamma$.

We write
\begin{gather}
  \rmM^!_\bullet(\rho)
:=
  \bigoplus_k \rmM^!_k(\rho)
\quad\tx{and}\quad
  \rmM_\bullet(\rho)
:=
  \bigoplus_k \rmM_k(\rho)
\end{gather}
for the graded module of (weakly holomorphic) modular forms. In the case of $\rho = \bbone$ it is the graded ring of (weakly holomorphic) modular forms for~$\Gamma$.

Let $k \in \ZZ$ and fix an arithmetic type~$\rho$ for~$\Gamma \subseteq \SL{2}(\ZZ)$.
We define quasimodular forms of depth~$0$ as modular forms in the usual sense. A function~$f \in \cH_k(\rho)^\md$ is called a quasimodular form of weight~$k$, (arithmetic) type~$\rho$, and depth~$\sfd > 0$ if
\begin{gather*}
  \varphi_f\big( \begin{psmatrix} a & b \\ c & d \end{psmatrix} \big)
=
  \sum_{t = 1}^{\sfd}
  \frac{c^t f_t(\tau)}{(c\tau + d)^t}
\end{gather*}
for quasimodular forms~$f_t$ of depth~$t$. The space of such functions is denoted by $\rmQ\rmM^{[\sfd]}_k(\rho)$. Note that the definition of quasimodular forms is recursive.
\begin{remark}
We will later introduce the space $\rmM^{[\sfd]}_k(\rho)$ of modular forms of order~$\sfd$, which should not be confused with~$\rmQ\rmM^{[\sfd]}_k(\rho)$.
\end{remark}

An almost holomorphic modular form is a function $f \in \cH_k(\rho)^\md[y^{-1}]$  that transforms like a modular form. Quasimodular forms are linked to almost holomorphic modular forms via modular completions: Every quasimodular forms appears as the constant term with respect with respect to $y^{-1}$ of an almost holomorphic modular form, and vice versa the constant term with respect to~$y^{-1}$ of an almost holomorphic modular form is a quasimodular form.

\subsection{Fourier and Taylor expansions}

Given an arithmetic type~$\rho$ with unitarizable $\rho(T)$, any $f \in \rmM^!_k(\rho)$ admits a Fourier expansion of the form
\begin{gather}
  f(\tau)
=
  \sum_{n \in \QQ} c(f;\, n)\, e(n \tau)
\tx{,}\quad
  c(f;\,n) \in V(\rho)
\tx{,}
\end{gather}
where here and throughout we use the notation $e(x) = \exp(2 \pi i\, x)$. For general~$\rho$, $f \in \rmM^!_k(\rho)$ has a Fourier expansion of the form
\begin{gather}
  f(\tau)
=
  \sum_{n \in \QQ} c(f;\, n;\, \tau) e(n \tau)
\tx{,}\quad
  c(f;\, n;\, \tau) \in V(\rho) \otimes \Poly(\sfd, \tau)
\tx{,}
\end{gather}
where~$\sfd$ is the depth of~$\rho(T)$.
\begin{remark}
To see that $c(f;\, n;\, \tau)$ is indeed a polynomial, it suffices to identify the finite-dimensional, indecomposable subrepresentations~$\sigma \subset \cH \circlearrowleft T^\bullet = \{ T^n \,:\, n \in \ZZ \}$. We may assume that $c_0(\tau), \ldots, c_\sfd(\tau)$ is a basis of $V(\sigma)$ on which~$T$ acts in Jordan normal form. The characters of~$T^\bullet$ are $T \mto e(m)$ for $m \in \RR$, realized by $e(m \tau) \in \cH_k(\bbone)$. Twisting by a suitable character, we can assume that $c_0(\tau) | T = c_0(\tau)$. So $c_0(\tau)$ is periodic, and hence has a usual Fourier series expansion. Induction on $0 < i \le \sfd$ shows that $c_i(\tau + 1) = c_i(\tau) | T = c_i(\tau) + c_{i-1}(\tau)$. The solution of this difference equation is unique up to addition of a periodic function. It is a polynomial in $\tau$ of degree~$i$ whose coefficients are periodic functions. Observe that this also reveals that the $c(f;\, n;\, \tau)$ are determined by their constant coefficients with respect to~$\tau$.
\end{remark}

Given $\tau_0 \in \HS$, we write the Taylor expansion of $f \in \rmM^!_k(\rho)$ as
\begin{gather*}
  f(\tau)
=
  \sum_{n = 0}^\infty
  \frac{c_{\tau_0}(f;\, n)}{n!\, (2 \pi i)^n}\,
  \big( \tau - \tau_0 \big)^n
\tx{,}\quad
  c_{\tau_0}(f;\,n)
:=
  (2 \pi i)^n
  \frac{\rmd^n f}{\rmd\tau^n}(\tau_0)
\in
  V(\rho)
\tx{.}
\end{gather*}

\subsection{Harmonic weak Maa\ss\ forms}
\label{ssec:harmonic-weak-maass-forms}

Recall the Maa\ss\ lowering and raising operators
\begin{gather*}
  \rmL_k := \rmL
:=
  -2iy^2 \partial_{\ov\tau}
\tx{,}\quad
  \rmR_k
:=
  2i\partial_\tau +k y^{-1}
\tx{.}
\end{gather*}
Their composition $\Delta_k := -\rmR_{k-2} \rmL_k$ is the weight~$k$ hyperbolic Laplace operator.

Let $k \in \ZZ$ and fix an arithmetic type~$\rho$ for~$\Gamma \subseteq \SL{2}(\ZZ)$. A function $f \in \cC^\infty_k(\rho)^\ex$ is called a harmonic weak Maa\ss\ form of weight~$k$ and (arithmetic) type~$\rho$ if $\Delta_k\, f = 0$ and $f \big|_{k,\rho}\, \gamma = f$ for all $\gamma \in \Gamma$. In this paper, we restrict to mock modular forms that arise from harmonic weak Maa\ss\ form in the sense of~\cite{bruinier-funke-2004}, i.e.\ harmonic weak Maa\ss\ forms~$f$ for which $\rmL_k\,f \in \cC^\infty_{k-2}(\rho)^\md$ has moderate growth.

Every harmonic weak Maa\ss\ form~$f$ admits a decomposition as the sum of a holomorphic part~$f^+$ and a non-holomorphic part~$f^-$: We have~$f = f^+ + f^-$. If $k \le 0$, $\rho(T)$ is unitarizable, and $\rmL_k\, f$ has moderate growth, then
\begin{gather}
\label{eq:decomposition-into-holomorphic-part}
\begin{aligned}
  f^+(\tau)
&=
  \sum_{n \in \QQ} c^+(f;\, n)\, e(n \tau)
\qquad\tx{and}
\\
  f^-(\tau)
&=
  c^-(f;\, 0) y^{1-k}
  +
  \sum_{n \in \QQ_{< 0}} c^-(f;\, n)\, \Gamma(1-k,4 \pi |n| y) e(n \tau)
\tx{,}
\end{aligned}
\end{gather}
where $\Gamma(s, y)$ denotes the upper incomplete gamma function.

\subsection{Mixed mock modular forms}
\label{ssec:mixed-mock-modular-forms}

Let $k \in \ZZ$ and fix an arithmetic type~$\rho$ for~$\Gamma \subseteq \SL{2}(\ZZ)$. A function $f \in \cH_k(\rho)^\ex$ is called a mock modular form of weight~$k$ and (arithmetic) type~$\rho$ if there is a harmonic weak Maa\ss\ form~$\wtd{f}$ of weight~$k$ and type~$\rho$ such that $f = \wtd{f}^+$. The space of such mock modular forms is denoted by $\bbM_k(\rho)$. As in the case of modular forms, if $\rho$ is the trivial type~$\bbone$, we occasionally suppress it from the notation or replace it by~$\Gamma$.

Given another $l \in \ZZ$ and another arithmetic type~$\rho'$, we call
\begin{gather}
\label{eq:def:mixed-mock-modular-forms}
 \rmM_l^!(\rho')
 \otimes
 \bbM_k(\rho)
\end{gather}
the space of mixed mock modular forms of weight~$(k,l)$ and type $\rho' \otimes \rho$. This coincides with the notion in~\cite{dabholkar-murthy-zagier-2018}.

If $k \in 2\ZZ_{\le 0}$ and $\rho$ has finite index kernel, then by Fay~\cite{fay-1977} and Bruinier-Funke~\cite{bruinier-funke-2004}, the cocycle $\varphi_f$ attached to a mock modular form of weight~$k$ and type~$\rho$ takes values in $\Poly(|k|, \tau) \otimes V(\rho)$. Consequentially, the cocycles~$\varphi_f$ for mixed mock modular forms~$f$ in~\eqref{eq:def:mixed-mock-modular-forms} take values in $\rmM_l(\rho') \otimes  \Poly(|k|, \tau) \otimes V(\rho)$.

\section{Virtually real-arithmetic types}
\label{sec:virtually-real-arithmetic-types}

The notion of virtually real-arithemetic types in the next definition is central to our paper.
\begin{definition}
\label{def:vra-types}
An arithmetic type~$\rho$ is called real-arithmetic if its semi-simplification~$\rho^\semisimple$ extends to~$\SL{2}(\RR)$. An arithmetic type~$\rho$ is called virtually real-arithmetic if the restriction~$\Res^\Gamma_{\Gamma'}\,\rho$ to a finite index subgroup~$\Gamma' \subseteq \Gamma$ is real-arithmetic.
\end{definition}
Throughout the paper, we will refer to virtually real-arithmetic types as vra~types. Note that our definition of real-arithmetic types does not require that $\rho$ extends to $\SL{2}(\RR)$, but only~$\rho^\semisimple$.

\begin{example}\label{ex:vra-types}
\begin{enumeratearabic}
\item
Real-arithmetic types of depth~$0$ (cf.\ Section~\ref{ssec:socle-series} for the notion of depth) are direct sums of symmetric powers. Spe\-cif\-ical\-ly, each of them is isomorphic to
\begin{gather*}
  \bigoplus_{\sfd=0}^\infty a_\sfd\, \sym^\sfd
\tx{,}\quad
  a_\sfd \in \ZZ_{\ge 0}
\end{gather*}
for some~$a_\sfd$ such that $\sum a_\sfd < \infty$. In particular, any number of copies of the trivial representation is a real-arithmetic type.

\item
Arithmetic types with finite index kernel are virtually real-arithmetic. Indeed, the restriction to their kernel is isotrivial, i.e.\ a direct sum of trivial types~$\bbone \cong \sym^0$. It thus extends to~$\SL{2}(\RR)$ as required.

\item
Real-arithmetic types of depth~$1$ are extensions of two depth~$0$ representations $\rho$ and $\rho'$:
\begin{gather*}
  \bigoplus_{\sfd=0}^\infty
  a_\sfd\, \sym^\sfd
\lhra
  \rho
\lthra
  \bigoplus_{\sfd=0}^\infty
  a'_\sfd\, \sym^\sfd
\tx{.}
\end{gather*}
Since $\sym^{\vee\,\sfd} \cong \sym^\sfd$, we can use the Clebsch-Gordan rules to compute the possible extensions:
\begin{gather*}
  \Ext^1\big( \rho',\, \rho \big)
\;\cong\;
  \rmH^1\big( \Gamma,\, \rho^{\prime\,\vee} \otimes \rho  \big)
=
  \bigoplus_{\sfd, \sfd' = 0}^\infty
  \bigoplus_{\sfd'' = |\sfd - \sfd'|}^{\sfd + \sfd'}\;
  a_{\sfd} a'_{\sfd'}\,
  \rmH^1\big( \Gamma, \sym^{\sfd''} \big)
\end{gather*}
Parabolic extensions can thus be determined by the Eichler-Shimura theorem.

\item
Finite index induction preserves virtually real-arithmetic types---see Section~\ref{ssec:induction-of-types}. Inclusion~\eqref{eq:inclusion-induction-restriction} allows us to focus on modular forms for the induction of real-arithmetic types, if we wish.

\item
Vector-valued Hecke operators~\cite{raum-2017} map modular forms of vra~type to modular forms of vra~type.
\end{enumeratearabic}
\end{example}

\subsection{Some invariants of vra types}

Let $\rho$ be a vra~type of depth~$0$ and $\Gamma' \subseteq \Gamma$ such that $\Res^\Gamma_{\Gamma'}\, \rho$ is isomorphic to $\bigoplus a_\sfd\, \sym^\sfd$. Then we define the (weight) shift of~$\rho$ as
\begin{gather}
  \shift(\rho)
:=
  \max \big\{ \sfd \,:\, a_\sfd \ne 0 \big\}
\tx{.}
\end{gather}
For general vra~types~$\rho$ of socle length~$j$, we let
\begin{align}
  \shift(\rho)
&:=
  \sum_{i = 1}^{j} \shift\big( \soc^i(\rho) \slash \soc^{i-1}(\rho) \big)
\quad\tx{and}\\
  \pxs(\rho)
&:=
  \#\big\{ 1 \le i \le j \,:\, \soc^{i-1}(\rho) \subset \soc^i(\rho) \;\tx{is not parabolic} \big\}
\end{align}
be the (weight) shift and the parabolic excess of~$\rho$.

\subsection{Universal and co-universal extensions of arithmetic types}
\label{ssec:universal-extension}

Given arithmetic types $\rho$ and $\rho'$, we let $\rho \extb \rho'$ and $\rho \extbpara \rho'$ be the representations that fit into the short exact sequences
\begin{gather*}
  \rho
\lhra
  \rho \extb \rho'
\lthra
  \rho' \otimes \Ext^1(\rho',\, \rho)
\quad\tx{and}\quad
  \rho
\lhra
  \rho \extbpara \rho'
\lthra
  \rho' \otimes \Extpara(\rho',\, \rho)
\end{gather*}
such that for each nonzero $\varphi \in \Ext^1(\rho', \rho)$ or $\varphi \in \Extpara(\rho', \rho)$ we have a direct summand
\begin{gather}
\label{eq:universal-extensions-subtype}
  \rho \boxplus_\varphi \rho' \otimes \varphi
\subseteq
  \rho \extb \rho'
\quad\tx{and}\quad
  \rho \boxplus_\varphi \rho' \otimes \varphi
\subseteq
  \rho \extbpara \rho'
\tx{,}
\end{gather}
respectively. These extensions are called the universal (parabolic) extension of~$\rho$ and~$\rho'$. The co-universal (parabolic) extension is analogously given by
\begin{gather*}
  \rho \otimes \Ext^1(\rho',\, \rho)
\lhra
  \rho \extbd \rho'
\lthra
  \rho'
\quad\tx{and}\quad
  \rho \otimes \Extpara(\rho',\, \rho)
\lhra
  \rho \extbdpara \rho'
\lthra
  \rho'
\tx{.}
\end{gather*}

We clearly have the inclusions
\begin{gather*}
  \rho \extbpara \rho'
\lhra
  \rho \extb \rho'
\quad\tx{and}\quad
  \rho \extbdpara \rho'
\lhra
  \rho \extbd \rho'
\tx{.}
\end{gather*}
We record that if $\rho$ and $\rho'$ are vra~types, then so are $\rho \extb \rho'$ and $\rho \extbd \rho'$.

The projections $\rho \extb \rho' \thra \rho' \otimes \Ext^1(\rho', \rho)$ and $\rho \extbd \rho' \thra \rho'$ yield maps
\begin{gather*}
  \rmM_k \big( \rho \extb \rho' \big)
\lra
  \rmM_k \big( \rho' \otimes \Ext^1(\rho',\, \rho) \big)
\quad\tx{and}\quad
  \rmM_k \big( \rho \extbd \rho' \big)
\lra
  \rmM_k \big( \rho' \big)
\tx{.}
\end{gather*}
In general, they are not surjective (cf.\ Section~\ref{sec:existence-of-modular-forms}). We also obtain maps
\begin{gather*}
  \rmM_k \big( \rho \big)
\lra
  \rmM_k \big( \rho \extb \rho' \big)
\quad\tx{and}\quad
  \rmM_k \big( \rho \otimes \Ext^1(\rho',\, \rho) \big)
\lra
  \rmM_k \big( \rho \extbd \rho' \big)
\end{gather*}
from the inclusions $\rho \hra \rho \extb \rho'$ and $\rho \otimes \Ext^1(\rho',\, \rho) \hra \rho \extbd \rho'$. Less obviously, there is a map from modular forms of co-universal to universal type.
\begin{proposition}
\label{prop:from-co-universal-to-universal}
Fix arithmetic types~$\rho$ and $\rho'$ and a basis $\{ \varphi_i \}$ of\/ $\Ext^1(\rho', \rho)$.  There is a map
\begin{gather*}
  \rho \extbd \rho'
\lra
  \rho \extb \rho'
\tx{,}\quad
  \sum_i v_i \otimes \varphi_i \boxplus v'
\lmto
  \sum_i v_i \boxplus v' \otimes \varphi_i
\tx{.}
\end{gather*}
\end{proposition}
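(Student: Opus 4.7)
The plan is to realize $\Phi$ as a sum, indexed by the chosen basis $\{\varphi_i\}$ of $\Ext^1(\rho',\rho)$, of compositions of canonical projections from $\rho \extbd \rho'$ onto the single extensions $\rho \boxplus_{\varphi_i} \rho'$ with the canonical inclusions of these extensions into $\rho \extb \rho'$. Since each projection and each inclusion is $\Gamma$-equivariant by construction, the sum of such compositions is automatically $\Gamma$-equivariant, which is the sole content of the proposition that requires verification.

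First I would write out the $\Gamma$-actions on both sides in the given basis. The underlying vector space of $\rho \extbd \rho'$ is $V(\rho) \otimes \Ext^1(\rho',\rho) \oplus V(\rho')$, and the action reads
\begin{gather*}
  \gamma \cdot \Big(\textstyle\sum_i v_i \otimes \varphi_i \boxplus v'\Big)
=
  \textstyle\sum_i \big(\rho(\gamma) v_i + \varphi_i(\gamma) v'\big) \otimes \varphi_i
  \,\boxplus\,
  \rho'(\gamma) v'
\tx{,}
\end{gather*}
with the analogous formula on $\rho \extb \rho'$, where the basis now indexes copies of $V(\rho')$ rather than of $V(\rho)$. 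Next, for each index $i$, I would define the projection
\begin{gather*}
  p_i :\, \rho \extbd \rho' \lthra \rho \boxplus_{\varphi_i} \rho'
\tx{,}\quad
  \textstyle\sum_j v_j \otimes \varphi_j \boxplus v' \lmto v_i \boxplus v'
\end{gather*}
and the inclusion
\begin{gather*}
  \iota_i :\, \rho \boxplus_{\varphi_i} \rho' \lhra \rho \extb \rho'
\tx{,}\quad
  v \boxplus v' \lmto v \boxplus v' \otimes \varphi_i
\tx{,}
\end{gather*}
the latter being precisely the subrepresentation recorded in~\eqref{eq:universal-extensions-subtype}. Equivariance of $p_i$ and $\iota_i$ is a one-line check from these explicit action formulas.

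Setting $\Phi := \sum_i \iota_i \circ p_i$ then yields a $\Gamma$-equivariant map, and tracing a general element through the composition produces exactly the formula claimed. There is no serious obstacle in this proof; the only step requiring care is the bookkeeping observation that the basis $\{\varphi_i\}$ indexes copies of $V(\rho)$ inside the source and copies of $V(\rho')$ inside the target, so that $\Phi$ swaps the role of the basis between the two tensor factors. A direct verification of equivariance from the given formula is equally short, but factoring through the intermediate extensions $\rho \boxplus_{\varphi_i} \rho'$ explains why the formula is the natural one.
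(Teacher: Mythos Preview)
Your proof is correct and follows essentially the same approach as the paper: both factor the map through the individual extensions $\rho \boxplus_{\varphi_i} \rho'$ via the projections $p_i$ (the paper phrases these as quotients by $\rho \otimes U_2$ for the complementary subspace $U_2$) and the inclusions~\eqref{eq:universal-extensions-subtype}. Your version is slightly more explicit in writing out the $\Gamma$-action, but the argument is the same.
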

\begin{proof}
Observe that for every decomposition $U_1 \oplus U_2$ of $\Ext^1(\rho', \rho)$ there is a projection
\begin{gather*}
  \rho \extbd \rho'
\lthra
  \big(\, \rho \extbd \rho' \,\big) \big\slash \big( \rho \otimes U_2 \big)
\;\cong\;
  \rho \otimes U_1 \boxplus \rho'
\end{gather*}
When fixing a basis $\{ \varphi_i \}$ of $\Ext^1(\rho',\rho)$, this provides us with a map
\begin{gather*}
  \rho \extbd \rho'
\lra
  \bigoplus_i \rho \otimes \CC \varphi_i \boxplus \rho'
\tx{.}
\end{gather*}
Furthermore, for every $\varphi \in \Ext^1(\rho',\rho)$, we have
\begin{gather*}
  \rho \otimes \CC \varphi \boxplus \rho'
\cong
  \rho \boxplus_\varphi \rho'
\cong
  \rho \boxplus \rho' \otimes \CC \varphi
\tx{.}
\end{gather*}
We obtain the proposition on combining this with the inclusion
\begin{gather*}
  \rho \boxplus \rho' \otimes \CC \varphi
\lhra
  \rho \extb \rho'
\end{gather*}
from~\eqref{eq:universal-extensions-subtype}.
\end{proof}

\begin{proposition}
\label{prop:modular-forms-from-co-universal-to-universal}
Given $k \in \ZZ$, the map
\begin{gather}
\label{eq:prop:modular-forms-from-co-universal-to-universal}
  \rmM_k \big( \rho \extbd \rho' \big)
\lra
  \rmM_k \big( \rho \extb \rho' \big)
\tx{,}\quad
  \sum_i f_i \otimes \varphi_i \boxplus f'
\lmto
  \sum_i f_i \boxplus f' \otimes \varphi_i
\end{gather}
arising from the homomorphism in Proposition~\ref{prop:from-co-universal-to-universal} has kernel
\begin{gather*}
  \rmM_k(\rho)
  \otimes
  \big\{ \sum c_i \varphi_i \in \Ext^1(\rho',\rho) \,:\, \sum c_i = 0 \big\}
\tx{.}
\end{gather*}
\end{proposition}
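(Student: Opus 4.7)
The strategy is to work out the kernel of the map directly from the explicit vector-space description of $\rho \extbd \rho'$ and $\rho \extb \rho'$ in the chosen basis $\{ \varphi_i \}$ of $\Ext^1(\rho',\rho)$. As vector spaces we have $V(\rho \extbd \rho') = V(\rho) \otimes \Ext^1(\rho',\rho) \oplus V(\rho')$ and $V(\rho \extb \rho') = V(\rho) \oplus V(\rho') \otimes \Ext^1(\rho',\rho)$. Writing a modular form in $\rmM_k(\rho \extbd \rho')$ as $F = \sum_i f_i \otimes \varphi_i \boxplus f'$ with $f_i :\, \HS \ra V(\rho)$ and $f' :\, \HS \ra V(\rho')$, its image under~\eqref{eq:prop:modular-forms-from-co-universal-to-universal} is $\sum_i f_i \boxplus f' \otimes \varphi_i$, whose two coordinates in $V(\rho) \oplus V(\rho') \otimes \Ext^1(\rho',\rho)$ are $\sum_i f_i$ and $f' \otimes \sum_i \varphi_i$.

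Next, I would determine the kernel by setting both coordinates to zero. The second condition $f' \otimes \sum_i \varphi_i \equiv 0$, combined with the fact that $\{ \varphi_i \}$ is a basis of $\Ext^1(\rho',\rho)$ and hence $\sum_i \varphi_i \ne 0$ in that space, forces $f' \equiv 0$. With $f' \equiv 0$, the transformation law of type $\rho \extbd \rho'$ specializes: the coupling term $\varphi_i(\gamma) f'$ drops out, and the identity
\begin{gather*}
  \sum_i f_i(\gamma \tau) \otimes \varphi_i
=
  (c\tau + d)^k
  \sum_i \rho(\gamma)\, f_i(\tau) \otimes \varphi_i
\end{gather*}
together with the linear independence of $\{ \varphi_i \}$ yields $f_i \big|_{k,\rho}\, \gamma = f_i$ for every $i$. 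The growth condition on $F$ descends to each component $f_i$ (being a projection onto a finite-dimensional subspace), so each $f_i \in \rmM_k(\rho)$. The first kernel condition then becomes the linear relation $\sum_i f_i = 0$ among elements of $\rmM_k(\rho)$.

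Finally, I would translate this into the tensor-product description. Choosing a basis $\{ g_\alpha \}$ of $\lspan_\CC \{ f_i \} \subseteq \rmM_k(\rho)$ and writing $f_i = \sum_\alpha d_{\alpha i}\, g_\alpha$, the relation $\sum_i f_i = 0$ becomes $\sum_i d_{\alpha i} = 0$ for each $\alpha$. Hence $\sum_i f_i \otimes \varphi_i = \sum_\alpha g_\alpha \otimes \big( \sum_i d_{\alpha i}\, \varphi_i \big)$ exhibits the kernel as a subspace of $\rmM_k(\rho) \otimes \{ \sum_i c_i\, \varphi_i \,:\, \sum_i c_i = 0 \}$, and the reverse inclusion is immediate from the definition of the map. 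I do not anticipate a deep obstacle: the only real care needed is in the basis bookkeeping, since the linear condition $\sum_i c_i = 0$ on $\Ext^1(\rho',\rho)$ is defined relative to the chosen basis $\{ \varphi_i \}$, mirroring the basis dependence of the map of Proposition~\ref{prop:from-co-universal-to-universal} itself.
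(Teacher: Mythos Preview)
Your proof is correct and follows essentially the same route as the paper's: set the image to zero, deduce $f' = 0$, conclude each $f_i \in \rmM_k(\rho)$, obtain the relation $\sum_i f_i = 0$, and then rewrite the resulting set as the tensor product $\rmM_k(\rho) \otimes \{ \sum c_i \varphi_i : \sum c_i = 0 \}$. One small but worthwhile difference: the paper appeals to the finite-dimensionality of $\rmM_k(\rho)$ (Proposition~\ref{prop:modular-forms-space-dimension-bound}) to justify the final identification, whereas your basis argument only uses that there are finitely many $\varphi_i$ (so the span of the $f_i$ is automatically finite-dimensional), which is slightly more economical and avoids the forward reference.
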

\begin{proof}
Suppose that $\sum_i f_i \otimes \varphi_i \boxplus f'$ lies in the kernel of~\eqref{eq:prop:modular-forms-from-co-universal-to-universal}. Then we immediately see that $f' = 0$. This implies that $f_i \in \rmM_k(\rho) \subseteq \rmM_k(\rho \extbd \rho')$. The kernel of~\eqref{eq:prop:modular-forms-from-co-universal-to-universal} therefore is
\begin{gather*}
  \big\{ \sum f_i \varphi_i \in \rmM_k(\rho) \otimes \Ext^1(\rho,\rho') \,:\, \sum f_i = 0 \big\}
\tx{.}
\end{gather*}
This implies the desired statement, since $\rmM_k(\rho)$ is finite-dimensional by Proposition~\ref{prop:modular-forms-space-dimension-bound}, which we will prove independently of Proposition~\ref{prop:modular-forms-from-co-universal-to-universal}.
\end{proof}

\subsection{Differential recursions for vector-valued modular forms}
\label{ssec:differential-recursions}

In this section we will prove the foundation to Theorem~\ref{thm:main:recursion-rationality}: A rationality statement for Fourier series coefficients and Taylor coefficients of vector-valued modular forms. For the proof we combine two ingredients: A differential equation that can be deduced for vector-valued modular forms of any type~\cite{mason-2007,knopp-mason-2011}, and a rationality argument that is equally general and can be found, for example, in~\cite{doran-gannon-movasati-shokri-2013}. In particular, Theorem~\ref{thm:recursion-rationality} does not require vra~types, and is certainly known to experts.

For simplicity, we will assume in this section that $\Gamma = \SL{2}(\ZZ)$, implying that $\rmM_\bullet(\rho)$ is free of rank~$\dim\,\rho$ as a module over~$\rmM_\bullet$ by~\cite{marks-mason-2010,knopp-mason-2011}. This assumption can be easily satisfied by passing from $\rho$ for arbitrary~$\Gamma$ to $\Ind_\Gamma^{\SL{2}(\ZZ)}\,\rho$ without impacting the statement of Theorem~\ref{thm:recursion-rationality}. Fix an $\rmM_\bullet$-basis $f_i \in \rmM_{k_i}(\rho)$, $1 \le i \le \dim\,\rho$ of $\rmM_\bullet(\rho)$ with increasing weights~$k_i$.

Recall the Serre derivative on modular forms of weight~$k$:
\begin{gather*}
  \rmD\, f
\;:=\;
  \rmD_k\, f
\;:=\;
  \frac{\partial_\tau}{2 \pi i}\, f
  -
  \frac{k}{12} E_2(\tau)\, f
\tx{,}
\end{gather*}
where
\begin{gather*}
  E_2(\tau)
=
  1 - 24 \sum_{n \ge 1} \sigma_1(n)\, e(n \tau)
\in
  \rmQ\rmM^{[1]}_2
\tx{.}
\end{gather*}
is the quasi-modular Eisenstein series of weight~$2$ and level~$1$. The Serre derivative is covariant with respect to~$\SL{2}(\ZZ)$ from weight~$k$ to weight~$k+2$. In particular, for any arithmetic type~$\rho$, we obtain a holomorphic differential operator
\begin{gather*}
  \rmD_k :\,
  \rmM_k(\rho)
\lra
  \rmM_{k+2}(\rho)
\tx{.}
\end{gather*}
When suppressing the subscript of~$\rmD$ acting on modular forms, we assume that it acts with~$k$ matching the weight. In particular, we use the notation $\rmD^n=\rmD_k^n=\rmD_{k+2n-2}\circ\ldots\rmD_{k+2}\circ\rmD_k$ for the iterated Serre derivative. 

We employ the Serre derivative to derive modular linear differential equations for~$f \in \rmM_k(\rho)$. Let $\rho_f \hra \rho$ be the subrepresentation that is generated by the component functions of~$f$:
\begin{gather*}
  V(\rho_f)
=
  \CC\lspan\big\{ f(\tau) \,:\, \tau \in \HS \big\}
\tx{.}
\end{gather*}
By Equation~(44) of~\cite{knopp-mason-2011}, we arrive at the assertion that $f$ satisfies a differential equation
\begin{gather}
\label{eq:mlde-for-modular-form}
  \sum_{j=0}^{\dim\,\rho_f} g_{f,j}\cdot \rmD^j f
=
  0
\end{gather}
for modular forms~$g_{f,j} \in \rmM_{l - 2j}$, $l \in \ZZ_{\ge 0}$ with nonzero~$g_{f,0}$. We may assume that $l$ is minimal with this property. Then $g_{f,0}$ is unique up to scalar multiples by Theorem~3.14 of~\cite{knopp-mason-2011}. We can rewrite~\eqref{eq:mlde-for-modular-form} by expanding the definition of the Serre deriviative and obtain
\begin{gather}
\label{eq:expandedmlde-for-modular-form}
  \sum_{j=0}^{\dim\,\rho_f} h_{f,j}\cdot \Big(\frac{\partial_\tau}{2 \pi i} \Big)^j f
=
  0
\end{gather}
for quasimodular forms~$h_{f,j}$ of level~$1$ and weight~$l - 2j$.

\begin{theorem}
\label{thm:recursion-rationality}
Fix $k \in \ZZ$ and an arithmetic type~$\rho$. Given $f \in \rmM_k(\rho)$, the following are finite-dimensional as $\QQ$ vector spaces:
\begin{alignat*}{2}
&
  \lspan \QQ\big\{
  c(f; n; \tau) \,:\,
  n \in \QQ_{\ge 0}
  \big\}
&&\subset
  V(\rho) \otimes \Poly(\tau)
\qquad\tx{and}
\\&
  \lspan \QQ\big\{
  c_{\tau_0}(f; n) \,:\,
  n \in \QQ_{\ge 0}
  \big\}
&&\subset
  V(\rho)
\tx{.}
\end{alignat*}

More precisely, there are quasimodular forms~$h_{f,j}$, $0 \le j \le \dim\,\rho_f$ of level~$1$ and weight $l - 2j$, $l \in \ZZ$ such that the Fourier coefficients of~$f$ satisfy the differential recursion
\begin{gather*}
  \sum_{\substack{i,j \in \ZZ_{\ge 0} \\ i+j \le \dim\,\rho_f}}
  \sum_{\substack{m \in \ZZ_{\ge 0} \\ m \le n}}
  c(h_{f,i+j}; n - m)
  m^i\, \partial_\tau^j\, c(f; m; \tau)
=
  0
\end{gather*}
for all $n \in \QQ_{\ge 0}$, and the Taylor coefficients of~$f$ at~$\tau_0$ satisfy the recursion
\begin{gather*}
  \sum_{m = 0}^n
  \sum_{j = 0}^{\dim\,\rho_f}
  \tbinom{n}{m}
  c_{\tau_0}(h_{f,j}; n - m)
  c_{\tau_0}(f; m)
=
  0
\end{gather*}
for all $n \in \ZZ_{\ge 0}$.
\end{theorem}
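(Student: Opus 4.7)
My plan is to combine the two ingredients announced in the preamble of this section: the Knopp--Mason modular linear differential equation (MLDE) \eqref{eq:expandedmlde-for-modular-form}, together with a rationality argument in the style of \cite{doran-gannon-movasati-shokri-2013}. Since the MLDE $\sum_{j=0}^{d}h_{f,j}\,(\partial_\tau/(2\pi i))^{j}f=0$, with $d=\dim\rho_{f}$ and each $h_{f,j}$ a level-one quasimodular form of weight $l-2j$, is already at our disposal, only the explicit recursions and the $\QQ$-finite-dimensionality remain to be established.

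To obtain the Fourier recursion, I substitute $f=\sum_{m}c(f;m;\tau)e(m\tau)$ into the MLDE and apply the Leibniz rule
\[
\partial_\tau^{j}\bigl(c(f;m;\tau)e(m\tau)\bigr)=\sum_{i=0}^{j}\tbinom{j}{i}(2\pi i\,m)^{i}\partial_\tau^{j-i}c(f;m;\tau)\,e(m\tau).
\]
Multiplying by the Fourier series of $h_{f,j}$, extracting the coefficient of $e(n\tau)$, and reindexing $(i,j)\mapsto (i,j-i)$ produces the claimed identity once the binomial and $(2\pi i)$-factors are absorbed into an adjusted convention for the $h_{f,j}$. For the Taylor recursion at $\tau_{0}$, I substitute the Taylor expansions of $f$ and of $h_{f,j}$ around $\tau_{0}$ into the MLDE and apply the classical Leibniz rule for the $n$-th derivative of the product $h_{f,j}\cdot\partial_\tau^{j}f$ at $\tau_{0}$, which directly yields the binomial convolution $\sum_{m=0}^{n}\tbinom{n}{m}c_{\tau_{0}}(h_{f,j};n-m)c_{\tau_{0}}(f;m)$.

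For the $\QQ$-finite-dimensionality, the key is that the algebra of level-one quasimodular forms $\CC[E_{2},E_{4},E_{6}]$ carries the $\QQ$-structure $\QQ[E_{2},E_{4},E_{6}]$, whose elements have $\QQ$-rational Fourier coefficients. Expanding each $h_{f,j}$ in the $\CC$-basis of monomials in $E_{2},E_{4},E_{6}$, the Fourier coefficients $c(h_{f,j};n)$ are $\QQ$-linear combinations of the finitely many $\CC$-scalars appearing as polynomial coefficients of the $h_{f,j}$; these scalars span a finite-dimensional $\QQ$-subspace $K\subset\CC$. Since $\partial_\tau$ acts $\QQ$-linearly on $V(\rho)\otimes\Poly(\tau)$, an induction on $n$ confines each $c(f;n;\tau)$ to the $K$-span of the finitely many initial coefficients $c(f;m;\tau)$, $m\le M_{0}$, together with their $\partial_\tau$-derivatives, which is a fixed finite-dimensional $\QQ$-subspace. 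The Taylor case is handled by an analogous but more delicate induction: following \cite{doran-gannon-movasati-shokri-2013}, the Ramanujan differential equations for $E_{2},E_{4},E_{6}$ reduce all Taylor coefficients of the $h_{f,j}$ at $\tau_{0}$ that enter the recursion to a fixed finite-dimensional $\QQ$-subspace of $\CC$.

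The principal obstacle is this rationality step. For the Fourier case it is relatively transparent: each $h_{f,j}$ is a polynomial in $E_{2},E_{4},E_{6}$ whose finitely many $\CC$-scalar coefficients automatically span a finite-dimensional $\QQ$-subspace. For the Taylor case, one must further invoke the Ramanujan system to control the Taylor coefficients of the $h_{f,j}$ at $\tau_{0}$; this is the content of the rationality argument imported from \cite{doran-gannon-movasati-shokri-2013}, and its interaction with the uniqueness of $g_{f,0}$ up to scalar (Theorem~3.14 of \cite{knopp-mason-2011}) is the crucial technical point. Once this is granted, the remainder of the argument reduces to bookkeeping around the Leibniz rule.
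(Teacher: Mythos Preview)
Your derivation of the two recursions from the MLDE via Leibniz is fine and matches the paper, which dispatches this in one line (``The recursions hold by~\eqref{eq:expandedmlde-for-modular-form}''). The difference lies in how the $\QQ$-finite-dimensionality is extracted from the recursions.

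The paper does \emph{not} argue via the $\QQ$-structure of $\QQ[E_2,E_4,E_6]$ or via Ramanujan's system. Instead it observes that the entire first claim reduces to a single nonvanishing statement: some $c(h_{f,j};0)\ne 0$ in the Fourier case, and some $c_{\tau_0}(h_{f,j};0)=h_{f,j}(\tau_0)\ne 0$ in the Taylor case. Once this holds, the top term of the recursion isolates the newest coefficient, and the induction you want actually runs. For Fourier coefficients the nonvanishing is exactly what Knopp--Mason establish inside their proof of Theorem~3.14 (minimality of $l$ forces some $h_{f,j}$ to have nonzero constant term, else one could divide by $\Delta$). For Taylor coefficients the paper adapts the same minimality argument, replacing $\Delta$ by a level-$1$ modular form of weight at most~$12$ vanishing to order~$1$ precisely along $\SL{2}(\ZZ)\tau_0$. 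You allude to Theorem~3.14, but only as a side remark in the Taylor discussion; you never actually invoke the nonvanishing to justify the existence of your $M_0$, and without it the induction step is unfounded.

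There is also a genuine problem with your Taylor argument. Ramanujan's equations show that $c_{\tau_0}(h_{f,j};n)$ lies in the $\QQ$-algebra generated by $E_2(\tau_0),E_4(\tau_0),E_6(\tau_0)$ (times the finitely many $\CC$-scalars in the $h_{f,j}$), but that algebra is \emph{not} a finite-dimensional $\QQ$-subspace of $\CC$ for generic $\tau_0$: already the sequence $E_2(\tau_0)^n$ is typically $\QQ$-linearly independent. So ``Ramanujan reduces the Taylor coefficients of the $h_{f,j}$ to a fixed finite-dimensional $\QQ$-subspace'' is false as stated, and the induction you sketch does not close. The paper sidesteps this entirely by proving only the nonvanishing $h_{f,j}(\tau_0)\ne 0$ and then deferring the rationality mechanism to~\cite{doran-gannon-movasati-shokri-2013}, exactly as announced before the theorem.
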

\begin{remark}
In the case of vra~types the recursion in~Theorem~\ref{thm:recursion-rationality} is effectively computable. This can be employed to compute Fourier or Taylor expansions up to order~$n$ of modular forms of vra~type in $\mathrm{TIME}(n^2\log(n))$. 

Instead of the Taylor expansion of a modular form~$f$ at $\tau_0$ one may equally well consider the Taylor expansion of $f \circ g$ at~$0$ for a transformation $g \in \SL{2}(\CC)$ that maps~$\HS$ to the Poincar\'e disc and~$\tau_0$ to~$0$. If $g \in \SL{2}(\QQ(\tau_0))$ and $\tau_0$ is algebraic, then the Taylor coefficients of~$f \circ g$ at~$0$ satisfy the same rationality statement as in Theorem~\ref{thm:recursion-rationality}.
\end{remark}
\begin{proof}[{Proof of Theorem~\ref{thm:recursion-rationality}}]
The recursions hold by~\eqref{eq:expandedmlde-for-modular-form}. To deduce the first statement from this, it suffices to show that $c(h_{f,j};0) \ne 0$ for some~$j$ and that $c_{\tau_0}(h_{f,j};\,0) \ne 0$ for some $j$. The former has literally been established by Knopp and Mason in their proof of Theorem~3.14 of~\cite{knopp-mason-2011}. The latter follows along the same lines when considering that for every~$\tau_0 \in \HS$ there is a modular form of level~$1$ and weight at most~$12$ that vanishes at $\SL{2}(\ZZ)\tau$ to order~$1$ and nowhere else.
\end{proof}

\section{Connection to other notions of modular forms}
\label{sec:other-notions-of-modular-forms}

In this section, we show that several classical constructions can be connected to modular forms of virtually real-arithmetic type. Classical modular forms appear for all symmetric powers $\sym^\sfd$. Mock modular forms yield modular forms for real-arithmetic types of depth~$1$. Higher order modular forms yield modular forms of virtually real-arithmetic types whose semi-simplification is isotrivial. Iterated integrals yield modular forms for virtually real-arith\-metic types whose socle extension classes are connected to iterated Eichler-Shimura integrals themselves. None of these constructions, however, exhausts modular forms of virtually real-arithmetic type. 

\subsection{Classical modular forms and arithmetic types~\texpdf{$\sym^\sfd$}{sym(d)}}
\label{ssec:other-notions-of-modular-forms:classical}

The goal of this section is to reformulate and extend results of~\cite{kuga-shimura-1960} on the spaces $\rmM_k(\sym^\sfd)$. The main result is recorded in Proposition~\ref{prop:embedding-of-modular-forms}. We base the whole section on two observations: The covariance of the vector-valued raising operator in Equation~\eqref{eq:def:raising-operator-vector-valued} and the $\SL{2}(\RR)$-invariance of the vector-valued polynomial in Equation~\eqref{eq:invariant-polynomial-negative-weight}.

We start by defining a vector-valued raising operator. We could extract it in essence from either~\cite{kuga-shimura-1960} or~\cite{zemel-2015}, but prefer to give a clean and short proof of its covariance. Recall that $\Poly(X,1)$ with the slash action of weight~$-1$ is isomorphic to $\std = \sym^1$ as an~$\SL{2}(\RR)$-re\-pres\-en\-ta\-tion. Set
\begin{gather}
\label{eq:def:raising-operator-vector-valued}
  \vvR_k 
:=
  (X - \tau) \partial_\tau - k 
  :\,
  \cC^\infty_k(\rho)
\lra
  \cC^\infty_{k+1}(\std \otimes \rho)
\tx{,}
\end{gather}
Throughout the paper, we use the notation $\vvR^\sfd_k := \vvR_{k + \sfd-1} \circ \cdots \circ \vvR_k$.

It is possible to give a closed formula for the action of $\vvR^\sfd_k$:
\begin{lemma}
\label{la:raising-operator-power-formula}
For any smooth $f$ we have that
\begin{align*}
  \vvR^\sfd_k\, f
=
  \sum_{j=0}^\sfd
  \binom{\sfd}{j}\,
  (k+\sfd-1)_{\sfd-j}\,
  (-1)^{\sfd-j}
  (X - \tau)^j\,
  \partial_\tau^j\,
  f
\tx{,}
\end{align*}
where $(a)_n := a(a-1)\ldots(a-n+1)$ denotes the falling factorial.
\end{lemma}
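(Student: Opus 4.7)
The plan is to prove the formula by induction on $\sfd$, with $\sfd=0$ being trivial (both sides equal $f$). For the inductive step, assuming the formula for $\sfd$, I would compute $\rmR^{\sfd+1}_k f = \rmR_{k+\sfd} \rmR^\sfd_k f$ by applying the first-order operator $\rmR_{k+\sfd} = (X-\tau)\partial_\tau - (k+\sfd)$ termwise to the hypothesized sum and then recollecting the coefficients of $(X-\tau)^j \partial_\tau^j f$.

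The essential calculation is the identity
\begin{gather*}
  (X-\tau)\partial_\tau \bigl[(X-\tau)^j \partial_\tau^j f\bigr]
  \;=\;
  -j\,(X-\tau)^j \partial_\tau^j f \;+\; (X-\tau)^{j+1} \partial_\tau^{j+1} f,
\end{gather*}
which comes from the Leibniz rule together with $\partial_\tau(X-\tau) = -1$. Writing $c^{(\sfd)}_j := \binom{\sfd}{j}\frac{(k+\sfd-1)!}{(k+j-1)!}(-1)^{\sfd-j}$, application of $\rmR_{k+\sfd}$ to the $j$-th term of the inductive hypothesis produces $-(j+k+\sfd)\,c^{(\sfd)}_j$ times $(X-\tau)^j \partial_\tau^j f$ plus $c^{(\sfd)}_j$ times $(X-\tau)^{j+1} \partial_\tau^{j+1} f$. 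Collecting, the coefficient of $(X-\tau)^j \partial_\tau^j f$ in $\rmR^{\sfd+1}_k f$ becomes $-(j+k+\sfd)\,c^{(\sfd)}_j + c^{(\sfd)}_{j-1}$ (with the convention that $c^{(\sfd)}_{-1} = c^{(\sfd)}_{\sfd+1} = 0$).

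It then remains to verify the purely combinatorial identity
\begin{gather*}
  c^{(\sfd+1)}_j \;=\; -(j+k+\sfd)\,c^{(\sfd)}_j \;+\; c^{(\sfd)}_{j-1},
\end{gather*}
which unwinds to the Pascal-type relation $\binom{\sfd+1}{j} = \binom{\sfd}{j} + \binom{\sfd}{j-1}$ after cancelling the common factor $(k+\sfd-1)!\,(-1)^{\sfd+1-j}/(k+j-1)!$ and using $(k+\sfd)!/(k+\sfd-1)! = k+\sfd$ and $(k+j-1)!/(k+j-2)! = k+j-1$; the factor $(j+k+\sfd)$ on the right matches up exactly with these rescalings when one writes $c^{(\sfd+1)}_j$ in terms of $c^{(\sfd)}_j$ and $c^{(\sfd)}_{j-1}$.

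No step here is a real obstacle — the only mild care is in handling the boundary cases $j=0$ and $j=\sfd+1$ (where one of the two contributing terms is absent) and in tracking the sign $(-1)^{\sfd-j}$ through the recursion. Given the small scale of the computation, I would present it as a one-paragraph induction with the key identity displayed and the combinatorial verification stated briefly.
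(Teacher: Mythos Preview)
Your proposal is correct and matches the paper's approach exactly: the paper's proof consists of the single sentence ``This can be verified by induction on~$\sfd$,'' with no further detail. One small caveat: the combinatorial identity $-(j+k+\sfd)c^{(\sfd)}_j + c^{(\sfd)}_{j-1} = c^{(\sfd+1)}_j$ does not reduce purely to Pascal's rule after factoring---it becomes $(j+k+\sfd)\binom{\sfd}{j} + (k+j-1)\binom{\sfd}{j-1} = (k+\sfd)\binom{\sfd+1}{j}$, which requires the additional observation that $j\binom{\sfd}{j} = (\sfd-j+1)\binom{\sfd}{j-1}$ before Pascal applies---but this is still entirely routine.
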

\begin{proof}
This can be verified by induction on~$\sfd$.
\end{proof}

Observe that $\vvR_k$ is a derivation on the graded modules~$\cC^\infty_\bullet(\rho) = \bigoplus \cC^\infty_k(\rho)$. That is, we have
\begin{gather}
\label{eq:raising-operator-derivation}
  \vvR_{k + l}\, (f \otimes g)
=
  ( \vvR_{k}\, f ) \otimes g
  +
  f \otimes ( \vvR_{l}\, g )
\end{gather}
for $f \in \cC^\infty_{k}(\rho_f)$ and $g \in \cC^\infty_{l}(\rho_g)$.

The following covariance property of~$\vvR_k$ confirms that it is a (weight) raising operator. As opposed to all other raising operators for elliptic modular forms that the authors are aware of, it intertwines two different arithmetic types.
\begin{proposition}
\label{prop:raising-operator-covariant}
Let $\rho$ be an arithmetic type and $k \in \ZZ$. The map $\vvR_k$ is covariant with respect to $\SL{2}(\ZZ)$ from $\cC^\infty_k(\rho)$ to $\cC^\infty_{k+1}(\std \otimes \rho)$. If $\rho$ extends to an $\SL{2}(\RR)$-representation, it is covariant with respect to $\SL{2}(\RR)$.
\end{proposition}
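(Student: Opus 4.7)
The plan is to prove covariance by direct computation. For $\gamma = \begin{psmatrix} a & b \\ c & d \end{psmatrix} \in \SL{2}(\ZZ)$ and $f \in \cC^\infty_k(\rho)$, one compares the two functions
\begin{gather*}
  \rmR_k\big( f \big|_{k,\rho}\, \gamma \big)(X;\tau)
\quad\tx{and}\quad
  \big( \rmR_k f \big) \big|_{k+1,\, \std \otimes \rho}\, \gamma\, (X;\tau)
\tx{.}
\end{gather*}
Since $\rho(\gamma^{-1})$ is a constant linear endomorphism of~$V(\rho)$, it commutes with the scalar differential operator $(X - \tau)\partial_\tau - k$, so it can be pulled outside of~$\rmR_k$ on the left-hand side. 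On both sides the calculation then reduces to applying the chain rule to $f(\gamma\tau)$, using $\partial_\tau(\gamma\tau) = (c\tau+d)^{-2}$ and $\partial_\tau(c\tau+d)^{-k} = -kc(c\tau+d)^{-k-1}$.

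The one identity that makes everything collapse is the determinantal observation
\begin{gather*}
  \frac{aX+b}{cX+d} - \frac{a\tau+b}{c\tau+d}
=
  \frac{(ad-bc)(X-\tau)}{(cX+d)(c\tau+d)}
=
  \frac{X - \tau}{(cX+d)(c\tau+d)}
\tx{,}
\end{gather*}
which rewrites the factor $\big(\tfrac{aX+b}{cX+d} - \gamma\tau\big)$ appearing on the right-hand side after expanding the slash action of~$\std \otimes \rho$ via the model $\Poly(X,1)$ of weight $-1$. Plugging this in and simplifying, both sides reduce to
\begin{gather*}
  \rho(\gamma^{-1})\,(c\tau+d)^{-k-1}
  \Big[ -k\,(cX+d)\, f(\gamma\tau) + (c\tau+d)^{-1}(X-\tau)\, f'(\gamma\tau) \Big]
\tx{.}
\end{gather*}

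This establishes the first claim. For the second claim, nothing in the computation uses that $\gamma$ has integer entries; the only facts that enter are $ad-bc = 1$ and that $\rho(\gamma^{-1})$ is defined. Hence if $\rho$ extends to an $\SL{2}(\RR)$-representation, so that $\rho(\gamma^{-1})$ makes sense for every $\gamma \in \SL{2}(\RR)$, the identical calculation yields $\SL{2}(\RR)$-covariance. The only genuine obstacle is book-keeping: one must correctly identify the slash action on~$\std \otimes \rho$ via the $\Poly(X,1)$ model of weight~$-1$ given in Section~\ref{ssec:symmetric-powers}, so that the variable substitution $X \mapsto \tfrac{aX+b}{cX+d}$ and the automorphy factor $(cX+d)(c\tau+d)^{-(k+1)}$ appear with the correct exponents.
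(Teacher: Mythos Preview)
Your proof is correct. You verify covariance by a single direct computation valid for an arbitrary $\gamma = \begin{psmatrix} a & b \\ c & d \end{psmatrix}$, hinging on the identity $\tfrac{aX+b}{cX+d} - \gamma\tau = \tfrac{X-\tau}{(cX+d)(c\tau+d)}$. The paper instead reduces to generators: it checks covariance separately for $\begin{psmatrix} 1 & b \\ 0 & 1 \end{psmatrix}$ (where the computation is almost trivial) and for $S = \begin{psmatrix} 0 & -1 \\ 1 & 0 \end{psmatrix}$ (where the chain rule for $\tau \mapsto -1/\tau$ is written out explicitly), and then invokes the fact that these generate $\SL{2}(\ZZ)$, respectively $\SL{2}(\RR)$. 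Your route is cleaner in that it handles all $\gamma$ uniformly and makes the $\SL{2}(\RR)$-case immediate without a separate argument about generators; the paper's route has the minor advantage of making each individual calculation shorter, at the cost of doing two of them. Both are standard and neither hides anything nontrivial.
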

\begin{proof}
We check covariance with respect to generators $\begin{psmatrix} 1 & b \\ 0 & 1 \end{psmatrix}$ and $S$ of $\SL{2}(\ZZ)$ (i.e.\@ $b \in \ZZ$) and $\SL{2}(\RR)$ (i.e.\@ $b \in \RR$). The first case is
\begin{multline*}
  \big( \vvR_k f \big)
  \big|_{k+1,\std \otimes \rho}\,\begin{psmatrix} 1 & b \\ 0 & 1 \end{psmatrix}
=
  \Big( \big( (X - \tau) \partial_\tau - k \big)\, f \Big)
  \big|_{k+1,\std \otimes \rho}\, \begin{psmatrix} 1 & b \\ 0 & 1 \end{psmatrix}
\\
=
  \big( ((X + b) - (\tau + b)) \partial_\tau - k \big)\,
  \big( f \big|_{k,\rho}\, \begin{psmatrix} 1 & b \\ 0 & 1 \end{psmatrix} \big)
=
  \vvR_k\, \big( f \big|_{k,\rho}\,\begin{psmatrix} 1 & b \\ 0 & 1 \end{psmatrix} \big) 
\tx{.}
\end{multline*}
Covariance with respect to~$S$ follows along the same lines when using
\begin{gather*}
  \partial_\tau \Big( \tau^{-k} f \big( \tfrac{-1}{\tau} \big) \Big)
=
  -k \tau^{-k-1} f \big( \tfrac{-1}{\tau} \big)
  +
  \tau^{-k-2} (\partial_\tau f) \big( \tfrac{-1}{\tau} \big)
\tx{.}
\end{gather*}
Indeed, we have
\begin{align*}
&\hphantom{={}}
  \big( \vvR_k f \big)
  \big|_{k+1, \std \otimes \rho}\, S
\\
&=
  \Big( \big( (X - \tau) \partial_\tau - k \big)\, f \Big)
  \big|_{k+1, \std \otimes \rho}\, S
\\
&=
  \rho(-S)\;
  \tau^{-k-1}\,
  \Big( \big( (- 1 - \tau X) \partial_\tau - k X \big)\, f \Big) \big( \tfrac{-1}{\tau} \big)
\\
&=
  \rho(-S)\;
  \Big(
  \big( - 1 - \tfrac{-1}{\tau} X \big)\, \tau^{-k-1} (\partial_\tau f) \big( \tfrac{-1}{\tau} \big)
  -
  k X \tau^{-k-1} f \big( \tfrac{-1}{\tau} \big)
  \Big)
\\
&=
  \rho(-S)\;
  \Big(
  \big( - 1 - \tfrac{-1}{\tau} X \big)\, \tau
  \big( \partial_\tau (f \big|_k\, S) + k \tau^{-1} (f \big|_k\, S) \big)
  -
  k X \tau^{-1} (f \big|_k\, S)
  \Big)
=
  \vvR_k\big( f_{k,\rho}\, S \big)
\tx{.}
\end{align*}
\end{proof}

We let
\begin{gather*}
  \pi_{\std^\sfd}
:\,
  \std^\sfd \lra \sym^\sfd
\tx{,}\;
  p(X_1, \ldots, X_\sfd)
\lmto
  p(X, \ldots, X)
\end{gather*}
be the symmetrization map. Iterating the raising operator~$\vvR$ and then applying~$\pi_{\std^\sfd}$, we obtain covariant maps
\begin{gather}
\label{eq:def:higher-vector-valued-raising-operator}
  \pi_{\std^\sfd} \circ \vvR_k^\sfd
:\,
  \cC^\infty_k(\rho) \lra \cC^\infty_{k+\sfd}(\sym^\sfd \otimes \rho)
\tx{.}
\end{gather}

\begin{lemma}
\label{la:raising-operator-almost-injective}
If $k \ne 0$ then $\vvR_k$ is injective. Specifically, it has a left inverse
\begin{gather*}
  \frac{(X - \tau) \partial_X - 1}{k}
\tx{.}
\end{gather*}

Suppose that $k > 0$. Then
\begin{gather}
\label{eq:prop:injective-vector-valued-raising-operator}
  \pi_{\std^\sfd} \circ \vvR_k^\sfd
:\,
  \cC^\infty_k(\rho) \lra \cC^\infty_{k+\sfd}(\sym^\sfd \otimes \rho)
\end{gather}
is injective on polynomials.
\end{lemma}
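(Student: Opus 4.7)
The plan is to establish each part by a short direct computation. For part~(1), I would verify that $\tfrac{1}{k}\bigl((X-\tau)\partial_X - 1\bigr)$ is in fact a left inverse of $\rmR_k$, from which injectivity follows immediately. Given $f \in \cC^\infty_k(\rho)$, which does not depend on~$X$, one has $\rmR_k f = (X-\tau)\partial_\tau f - kf$. Since $\partial_X$ annihilates $f$ and $\partial_\tau f$ while $\partial_X(X-\tau) = 1$, one computes $\partial_X(\rmR_k f) = \partial_\tau f$, so $(X-\tau)\partial_X(\rmR_k f) = (X-\tau)\partial_\tau f$. Subtracting $\rmR_k f$ itself produces $kf$, and dividing by the nonzero constant~$k$ recovers~$f$.

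For part~(2), the idea is to combine the closed-form expression from Lemma~\ref{la:raising-operator-power-formula},
\begin{gather*}
  \pi_{\std^\sfd}\,\rmR_k^\sfd f
\;=\;
  \sum_{j=0}^\sfd
  \binom{\sfd}{j}
  \frac{(k+\sfd-1)!}{(k+j-1)!}\,
  (-1)^{\sfd-j}\,
  (X-\tau)^j\,
  \partial_\tau^j f
\tx{,}
\end{gather*}
with the diagonal substitution $X = \tau$. At each fixed $\tau_0 \in \HS$, the left-hand side takes values in $\Poly(X,\sfd) \otimes V(\rho)$, so we may legitimately evaluate the polynomial in~$X$ at $X = \tau_0$. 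If the left-hand side vanishes identically, this evaluation annihilates every summand with $j \ge 1$, leaving only the $j = 0$ contribution $(-1)^\sfd (k)(k+1)\cdots(k+\sfd-1)\,f(\tau_0)$. Under the hypothesis $k > 0$ the Pochhammer product is nonzero, so $f(\tau_0) = 0$ for every $\tau_0 \in \HS$, hence $f \equiv 0$.

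I do not anticipate a serious obstacle here; the only point of care is recognising that the diagonal substitution $X = \tau$ is well-defined and detects vanishing, which follows from the fact that the image lives pointwise in $\Poly(X,\sfd) \otimes V(\rho)$. The polynomiality assumption on~$f$ is not actually used in the argument, which goes through for any smooth~$f$. The weight condition $k > 0$ enters in exactly one place, namely to guarantee that none of the factors $k, k+1, \ldots, k+\sfd-1$ vanishes; the argument would in fact go through whenever $k \notin \{0,-1,\ldots,-(\sfd-1)\}$.
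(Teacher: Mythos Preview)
Your argument is correct. Part~(1) agrees with the paper's ``easy computation''; you have simply written it out.

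For part~(2) you take a genuinely different route. The paper extracts the constant coefficient with respect to~$X$, which from Lemma~\ref{la:raising-operator-power-formula} works out to the operator
\begin{gather*}
  (-1)^\sfd\,(\tau\partial_\tau + k + \sfd - 1)\cdots(\tau\partial_\tau + k)
\tx{.}
\end{gather*}
Each factor $\tau\partial_\tau + k + i$ acts diagonally on monomials $\tau^n$ with eigenvalue $n + k + i$, which is nonzero for $k > 0$; this is precisely where the polynomiality hypothesis enters in the paper's approach, since on general smooth functions on~$\HS$ these Euler-type operators have nontrivial kernel (e.g.\ $\tau^{-(k+i)}$). Your diagonal substitution $X = \tau$ instead kills every $(X-\tau)^j$ term with $j \ge 1$ directly, leaving the scalar $(-1)^\sfd k(k+1)\cdots(k+\sfd-1)$ times~$f$. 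This is more elementary, requires no polynomiality, and as you observe actually works whenever $k \notin \{0,-1,\ldots,-(\sfd-1)\}$. So your argument proves a stronger statement than the paper claims.
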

\begin{proof}
The first part is an easy computation. The second part follows when considering the constant coefficient of~\eqref{eq:prop:injective-vector-valued-raising-operator} with respect to~$X$. It equals
\begin{gather*}
  (-1)^\sfd\,
  (\tau \partial_\tau + k + \sfd - 1)
  \cdots
  (\tau \partial_\tau + k)
\tx{.}
\end{gather*}
\end{proof}

Observe that for integers $\kappa \ge 0$ the polynomial
\begin{gather}
\label{eq:invariant-polynomial-negative-weight}
  (Y - \tau)^\kappa
\in
  \cC^\infty_{-\kappa}\big(\Poly(Y,\kappa) \big)
\end{gather}
is invariant with respect to~$\SL{2}(\RR)$. The polynomial in Equation~\eqref{eq:invariant-polynomial-negative-weight} in conjunction with the raising operator in~\eqref{eq:def:higher-vector-valued-raising-operator} allows us to obtain $\SL{2}(\RR)$-covariant operators
\begin{gather}
\label{eq:def:embedding-of-modular-forms}
  p_\kappa \vvR^\sfd_k
:\,
  \cC^\infty_k(\rho)
\lra
  \cC^\infty_{k + \sfd - \kappa}\big(\sym^{\sfd+\kappa} \otimes \rho \big)
\tx{,}\quad
  f
\lmto
  (X - \tau)^\kappa\, \vvR_k^\sfd\, f
\tx{.}
\end{gather}
Note that here $p$ is part of the symbol $p_\bullet \vvR^\bullet_\bullet$, and we explicitly do not let $p$ be the Polynomial~\eqref{eq:invariant-polynomial-negative-weight}.

In analogy with the results in the work of Kuga-Shimura~\cite{kuga-shimura-1960} and Zemel~\cite{zemel-2015}, we obtain the next result.
\begin{proposition}
\label{prop:embedding-of-modular-forms}
Suppose that $k + \sfd$ is odd, $k > \sfd$, or $k < -\sfd$. Then for any arithmetic type~$\rho$ with finite index kernel, the following map is an isomorphism:
\begin{gather}
\label{eq:embedding-of-modular-forms}
\begin{aligned}
  \bigoplus_{\substack{j = -\sfd \\ j \equiv \sfd \pmod{2}}}^\sfd
  \rmM_{k + j}(\rho)
&\lra
  \rmM_k(\sym^\sfd \otimes \rho)
\\
  \big( f_{-\sfd}, \ldots, f_\sfd \big)
&\lmto
  \sum_{\substack{j = -\sfd \\ j \equiv \sfd \pmod{2}}}^\sfd
  p_{\frac{\sfd + j}{2}} \vvR^{\frac{\sfd-j}{2}}_{k + j}\, f_j
\tx{.}
\end{aligned}
\end{gather}
\end{proposition}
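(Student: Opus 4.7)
The plan is threefold: confirm that the map is well-defined, establish injectivity by a triangularity argument in the filtration by powers of $Y := X - \tau$, and establish surjectivity by inverting that filtration inductively. Set $\kappa_j := (\sfd + j)/2$ and $\sfd_j := (\sfd - j)/2$, so that the $j$-th summand of~\eqref{eq:embedding-of-modular-forms} is $p_{\kappa_j}\rmR^{\sfd_j}_{k+j} f_j$. Well-definedness is immediate: Proposition~\ref{prop:raising-operator-covariant} provides the $\SL{2}(\ZZ)$-covariance of $\rmR^{\sfd_j}_{k+j}$, while the $\SL{2}(\RR)$-invariance of $Y^{\kappa_j}$ recorded in~\eqref{eq:invariant-polynomial-negative-weight} preserves modularity after multiplication. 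Holomorphy and moderate growth are inherited from $f_j$.

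For injectivity, Lemma~\ref{la:raising-operator-power-formula} shows that $p_{\kappa_j}\rmR^{\sfd_j}_{k+j} f_j$, viewed as a polynomial in $Y$, is supported in degrees $\kappa_j, \kappa_j + 1, \ldots, \sfd$, with lowest-degree coefficient
\begin{gather*}
  (-1)^{\sfd_j}\,(k+j)(k+j+1)\cdots(k+j+\sfd_j-1)\cdot f_j .
\end{gather*}
Since the minimal degrees $\kappa_j$ are pairwise distinct, a vanishing linear combination forces each $f_j$ to vanish, provided the Pochhammer-type scalar above is nonzero whenever $f_j \ne 0$. Under $k > \sfd$, every $k + j$ is a positive integer; under $k < -\sfd$, every $k + j + \sfd_j - 1$ is a negative integer; and under $k + \sfd$ odd, the remaining exceptional indices correspond to weights $k+j$ for which the compatibility requirement $\rho(-I) v = (-1)^{k+j}\,v$ on any component containing a nonzero $f_j$ forces $\rmM_{k+j}(\rho)$ to vanish on the relevant isotypic component of $\rho(-I)$. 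In all three cases, the triangular structure yields injectivity.

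For surjectivity, take $F \in \rmM_k(\sym^\sfd \otimes \rho)$ and expand $F = \sum_{m=0}^\sfd Y^m G_m(\tau)$ with holomorphic $G_m : \HS \to V(\rho)$. The identity $\gamma X - \gamma \tau = Y/((cX + d)(c\tau + d))$ combined with the modularity of $F$ yields an explicit transformation law for each $G_m$ under $\gamma \in \Gamma$: it behaves as a modular form of weight $k + 2m - \sfd$ and type $\rho$ up to correction terms linear in $G_0, \ldots, G_{m-1}$. In particular $G_0 = F|_{X = \tau}$ lies in $\rmM_{k - \sfd}(\rho)$. Inductively on $m$, define $f_{2m - \sfd}$ as the $Y^m$-coefficient of $F - \sum_{j < 2m - \sfd} p_{\kappa_j}\rmR^{\sfd_j}_{k+j} f_j$, normalized by the reciprocal of the Pochhammer scalar from the injectivity step. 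By Lemma~\ref{la:raising-operator-power-formula}, the non-leading $Y^m$-contributions from the lower-index summands precisely cancel the quasi-modular correction terms in $G_m$'s transformation law, so the extracted $f_{2m - \sfd}$ is a genuine modular form of weight $k + 2m - \sfd$ and type $\rho$.

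The main technical obstacle lies in verifying this cancellation: for each $\gamma \in \Gamma$, the corrections in the transformation law of $G_m$ contributed by $G_0, \ldots, G_{m-1}$ must match, term by term, the $Y^m$-contributions of the previously constructed summands $p_{\kappa_j}\rmR^{\sfd_j}_{k+j} f_j$. This reduces to a combinatorial identity among binomial coefficients and rising factorials, which follows from Lemma~\ref{la:raising-operator-power-formula} together with the explicit $\gamma$-action on $Y$. Once this bookkeeping is complete, $\sum_j p_{\kappa_j}\rmR^{\sfd_j}_{k+j} f_j$ agrees with $F$ in every $Y$-coefficient, hence equals $F$ as a polynomial in $X$.
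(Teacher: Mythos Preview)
Your overall strategy---filtering by powers of $Y = X - \tau$ and exploiting the triangular structure---is exactly the paper's approach. However, there is a genuine gap in your handling of the $k+\sfd$ odd case, and your surjectivity argument takes an unnecessary detour.

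\textbf{The gap.} For injectivity you need the Pochhammer factor $(k+j)(k+j+1)\cdots(k+j+\sfd_j-1)$ to be nonzero whenever $f_j \ne 0$. Your appeal to the action of $-I$ and ``isotypic components of $\rho(-I)$'' is not a valid argument: first, $-I$ need not lie in $\Gamma$; second, even if it does, the compatibility $\rho(-I)f_j = (-1)^{k+j}f_j$ merely says which eigenspace $f_j$ inhabits, not that this eigenspace of $\rmM_{k+j}(\rho)$ is zero. The correct argument uses the hypothesis that $\rho$ has finite index kernel, which you never invoke. The Pochhammer factor vanishes only if $k+j \le 0$. The case $k+j = 0$ is excluded by all three hypotheses (if $k+\sfd$ is odd then $k+j \equiv k+\sfd$ is odd, hence nonzero; the other two force $k+j > 0$ or $k+j < 0$ directly). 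So a vanishing Pochhammer forces $k+j < 0$. But then, restricting to the kernel $\Gamma' = \ker\rho$, every component of $f_j$ lies in $\rmM_{k+j}(\Gamma') = 0$, so $f_j = 0$. The same observation resolves the division-by-zero issue in your surjectivity step: if the Pochhammer at level $m$ vanishes, then $k + 2m - \sfd < 0$ and the lowest $Y$-coefficient $G_m \in \rmM_{k+2m-\sfd}(\rho) = 0$, so there is nothing to subtract.

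\textbf{The detour.} For surjectivity you set up a ``cancellation of quasi-modular corrections'' and defer to an unverified combinatorial identity. This is unnecessary. Your computation for $G_0$ generalizes verbatim: for any $F \in \rmM_k(\sym^\sfd \otimes \rho)$ with lowest nonvanishing $Y$-degree $m_0$, the identity $\gamma X - \gamma\tau = Y/((cX+d)(c\tau+d))$ shows that the $Y^{m_0}$-coefficient lies in $\rmM_{k-\sfd+2m_0}(\rho)$. Since $F - \sum_{j < 2m-\sfd} p_{\kappa_j}\rmR^{\sfd_j}_{k+j} f_j$ is itself an element of $\rmM_k(\sym^\sfd \otimes \rho)$ (a difference of modular forms), its lowest nonvanishing $Y$-coefficient is automatically modular---no bookkeeping required. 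This is precisely the Kuga--Shimura argument the paper cites.
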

\begin{proof}
We view elements of $\rmM_k(\sym^\sfd \otimes \rho)$ as polynomials in~$X - \tau$. To show that the map~\eqref{eq:embedding-of-modular-forms} is injective, it evidently suffices to show that the constant term (with respect to $X - \tau$) of $\vvR^{(\sfd-j) \slash 2}_{k+j} f$ is nonzero for any $f \in \rmM_{k+j}(\rho)$. The defining formula~\eqref{eq:def:raising-operator-vector-valued} in conjunction with the assumptions on $k$, already implies this.

To obtain surjectivity of~\eqref{eq:embedding-of-modular-forms}, we employ the same argument as Kuga-Shimura in~\cite{kuga-shimura-1960}: The lowest nonvanishing term (with respect to $X - \tau$) of any modular form in~$\rmM_k(\sym^\sfd \otimes \rho)$ yields a modular form of type~$\rho$.
\end{proof}

Based on Proposition~\ref{prop:embedding-of-modular-forms}, we can now prove that spaces of modular forms of vra~type are finite-dimensional.
\begin{proposition}
\label{prop:modular-forms-space-dimension-bound}
Let~$k \in \ZZ$ and let $\rho$ be a virtually real-arithmetic type. Then
\begin{gather*}
  \dim\,\rmM_k(\rho)
<
  \infty
\tx{.}
\end{gather*}
\end{proposition}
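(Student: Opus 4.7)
My plan is to induct on the socle length of $\rho$, after first reducing to the situation where $\Gamma = \SL{2}(\ZZ)$ and $\rho$ is real-arithmetic. Since finite-index induction preserves vra types (Example~\ref{ex:vra-types}) and yields $\rmM_k(\rho) \cong \rmM_k(\Ind_\Gamma^{\SL{2}(\ZZ)}\,\rho)$ (Section~\ref{ssec:induction-of-types}), I may assume $\Gamma = \SL{2}(\ZZ)$. Choosing a finite-index subgroup $\Gamma' \subseteq \SL{2}(\ZZ)$ on which $\rho' := \Res^{\SL{2}(\ZZ)}_{\Gamma'}\,\rho$ is real-arithmetic, the canonical inclusion~\eqref{eq:inclusion-induction-restriction} supplies an embedding
\begin{gather*}
  \rmM_k(\rho)
\lhra
  \rmM_k\big(\Ind_{\Gamma'}^{\SL{2}(\ZZ)}\,\rho'\big)
\;\cong\;
  \rmM_k(\rho'),
\end{gather*}
where the right-hand space consists of modular forms for $\Gamma'$. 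It thus suffices to bound $\dim\,\rmM_k(\rho')$ for a real-arithmetic $\rho'$.

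The socle filtration of $\rho'$ produces short exact sequences of $\Gamma'$-representations, each of which gives a left-exact sequence
\begin{gather*}
  0 \lra \rmM_k\big(\soc^{i-1}(\rho')\big) \lra \rmM_k\big(\soc^i(\rho')\big) \lra \rmM_k\big(\soc^i(\rho') \big\slash \soc^{i-1}(\rho')\big).
\end{gather*}
An induction on $i$ therefore reduces the problem to finite-dimensionality on each semisimple subquotient $\soc^i(\rho')/\soc^{i-1}(\rho')$. Being a direct summand of $(\rho')^\semisimple$, which extends to $\SL{2}(\RR)$, such a subquotient decomposes as $\bigoplus a_\sfd\,\sym^\sfd$. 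The task collapses to showing $\dim\,\rmM_k(\sym^\sfd) < \infty$ for $\Gamma'$, for every $k \in \ZZ$ and $\sfd \in \ZZ_{\ge 0}$.

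For this base case I plan to apply Proposition~\ref{prop:embedding-of-modular-forms} with trivial $\rho = \bbone$, which identifies $\rmM_k(\sym^\sfd)$ with a finite direct sum of classical scalar-valued spaces $\rmM_{k+j}(\Gamma')$, known to be finite-dimensional. The one genuine obstacle is the weight hypothesis in Proposition~\ref{prop:embedding-of-modular-forms}, which excludes the finitely many pairs with $k+\sfd$ even and $|k| \le \sfd$. I will circumvent this by multiplying by a high power of the modular discriminant $\Delta$ for $\Gamma'$, which is nonvanishing on $\HS$ and therefore yields an injection $\rmM_k(\sym^\sfd) \hra \rmM_{k+12N}(\sym^\sfd)$; taking $N$ large enough that $k + 12N > \sfd$ places the codomain in the range covered by Proposition~\ref{prop:embedding-of-modular-forms} and delivers the desired bound. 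Apart from this minor weight shift, the argument is a straightforward combination of induction on socle length, left-exactness of $\rmM_k$ as a functor on $\Gamma'$-representations, and classical finiteness of scalar-valued modular form spaces.
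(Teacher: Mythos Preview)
Your argument is correct and follows essentially the same strategy as the paper: reduce to a real-arithmetic type, induct on the socle length via left-exactness of $\rmM_k(\,\cdot\,)$, and handle the semisimple base case by reducing to classical scalar spaces. Two minor differences are worth noting. First, your reduction to a real-arithmetic type goes through induction and the inclusion~\eqref{eq:inclusion-induction-restriction}, whereas the paper simply restricts to a finite-index subgroup (a modular form for $\Gamma$ is automatically one for $\Gamma' \subseteq \Gamma$); your route works but is more circuitous. Second, for the base case $\rmM_k(\sym^\sfd)$ the paper does not invoke Proposition~\ref{prop:embedding-of-modular-forms} as a black box; it instead reuses the bare ``lowest nonvanishing term in $X-\tau$'' argument from that proposition's proof, which applies for every weight and yields $\dim\,\rmM_k(\sym^\sfd) \le \sum_j \dim\,\rmM_{k+j}(\Gamma')$ directly, without any weight hypothesis. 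Your $\Delta$-multiplication trick is a perfectly valid alternative that sidesteps the restriction, but it is not needed if one opens the proof of Proposition~\ref{prop:embedding-of-modular-forms}.
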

\begin{proof}
Passing to a sufficiently small subgroup~$\Gamma \subseteq \SL{2}(\ZZ)$, we can assume that $\rho$ is real-arithmetic. If $\rho$ has depth~$0$, it then suffices to prove that $\dim\,\rmM_k(\sym^\sfd) < \infty$ for all integers~$\sfd \ge 0$. To this end, we can use the same argument as at the end of the proof of Proposition~\ref{prop:embedding-of-modular-forms}: The lowest nonvanishing term of any modular in $\rmM_k(\sym^\sfd)$ is a classical modular form. Using induction, we obtain a bound
\begin{gather*}
  \dim\,\rmM_k(\sym^\sfd)
\le
  \sum_{\substack{j = -\sfd \\ j \equiv \sfd \pmod{2}}}^\sfd
  \dim\,\rmM_{k+j}
<
  \infty
\tx{.}
\end{gather*}

We next induce on the depth of~$\rho$. Given a general real-arithmetic type~$\rho$ of depth~$\sfd$, we have an exact sequence
\begin{gather*}
  \soc^\sfd(\rho)
\lhra
  \rho
\lthra
  \rho \slash \soc^\sfd(\rho)
\tx{.}
\end{gather*}
Observe that $\soc^\sfd(\rho)$ is real-arithmetic of depth~$\sfd-1$ and that the quotient of~$\rho$ by $\soc^\sfd(\rho)$ is real-arithmetic of depth~$0$. Employing the induction hypothesis, we find that
\begin{gather*}
  \dim\,\rmM_k\big( \soc^\sfd(\rho) \big)
<
  \infty
\quad\tx{and}\quad
  \dim\,\rmM_k\big( \rho \slash \soc^\sfd(\rho) \big)
<
  \infty
\tx{.}
\end{gather*}
We have an exact sequence (the right arrow is not necessarily a surjection: cf.\@ Theorem~\ref{thm:existence-of-modular-forms})
\begin{gather*}
  \rmM_k\big( \soc^\sfd(\rho) \big)
\lhra
  \rmM_k(\rho)
\lra
  \rmM_k\big( \rho \slash \soc^\sfd(\rho) \big)
\tx{.}
\end{gather*}
Combining it with the previous dimension bounds we finish the proof.
\end{proof}

A coarser variant of Proposition~\ref{prop:embedding-of-modular-forms} that connects $\rmM_k(\sym^\sfd \otimes \rho)$ with almost holomorphic modular forms (and hence quasimodular forms) can be deduced from the operator:
\begin{gather}
\label{eq:def:almost-symmetric-covariant-operator}
  \rmW :\,
  \cH_k(\rho)\big[ y^{-1} \big]
\lra
  \cH_{k-1}(\std \otimes \rho)\big[ y^{-1} \big]
\tx{,}\quad
  f_0(\tau) + y^{-1} f_1(\tau)
\lmto
  (X - \tau) f_0(\tau)- 2 i f_1(\tau)
\tx{,}
\end{gather}
where $f_0$ is holomorphic and $f_1$ is almost holomorphic. By iterating $\rmW$ and symmetrizing, we obtain
\begin{multline}
  \pi_{\std^\sfd} \circ \rmW^\sfd :\,
  \cH_k(\rho) + \cdots + y^{-\sfd}\cH_k(\rho)
\lra
  \cH_{k-1}(\sym^\sfd\otimes \rho)
\tx{,}
\\
  f_0(\tau) + \cdots + y^{\sfd} f_\sfd(\tau)
\lmto
  (X - \tau)^\sfd f_0(\tau)
  -
  2 i (X - \tau)^{\sfd-1} f_1(\tau)
  +
  \cdots
  +
  (-2i)^\sfd f_\sfd(\tau)
\tx{.}
\end{multline}
It is clear that $\rmW$ is invertible, and so Lemma~\ref{la:almost-symmetric-covariance} establishes a correspondence of almost holomorphic forms of depth at most~$\sfd$ and modular forms of type~$\sym^\sfd$. Observe that~\cite{zemel-2015} rests on this idea, but~$\rmW$ is not spelled out in the same way as here.
\begin{lemma}
\label{la:almost-symmetric-covariance}
The operator~$\rmW$ in~\eqref{eq:def:almost-symmetric-covariant-operator} is covariant with respect to~$\SL{2}(\ZZ)$. If $\rho$ extends to an $\SL{2}(\RR)$-representation, it is covariant with respect to $\SL{2}(\RR)$.
\end{lemma}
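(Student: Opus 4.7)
The plan is to verify the covariance identity $\rmW(f|_k \gamma) = \rmW(f)|_{k-1, \std\otimes\rho}\, \gamma$ by direct computation. Since $\rmW$ is $\CC$-linear, it suffices to treat a generic element $f = f_0 + y^{-1} f_1$ with $f_0, f_1 \in \cH_k(\rho)$. The verification will work the same way for any $\gamma = \begin{psmatrix} a & b \\ c & d \end{psmatrix}$ as long as $\rho(\gamma^{-1})$ is defined, so the $\SL{2}(\ZZ)$ statement and the $\SL{2}(\RR)$ statement follow from a single computation.

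Two building blocks drive the argument. First, the transformation of $y^{-1}$ under $\gamma$ is governed by $\Im(\gamma\tau) = y|c\tau+d|^{-2}$, and using $c\bar\tau + d = (c\tau+d) - 2icy$ one obtains the splitting
\begin{gather*}
  |c\tau+d|^2
=
  (c\tau+d)^2 - 2icy(c\tau+d)
\tx{.}
\end{gather*}
This says that the action of $\gamma$ on $y^{-1}$ differs from a pure weight-$(-2)$ transformation by a holomorphic correction of weight~$-1$ proportional to~$c$. As a result, $(f|_k \gamma)(\tau)$ takes the shape $F_0(\tau) + y^{-1} F_1(\tau)$, where $F_1$ is the ordinary weight-$(k-2)$ slash transform of $f_1$ and $F_0$ is the ordinary weight-$k$ slash transform of $f_0$ plus a cross-correction of the form $-2ic\,\rho(\gamma^{-1}) (c\tau+d)^{-k+1} f_1(\gamma\tau)$. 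Second, the polynomial $X-\tau$, viewed as an element of $\cC^\infty_{-1}(\std)$, is $\SL{2}(\RR)$-invariant; this is the $\kappa = 1$ instance of~\eqref{eq:invariant-polynomial-negative-weight} and follows from the one-line identity $(aX+b)(c\tau+d) - (a\tau+b)(cX+d) = X-\tau$.

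With these in hand I apply $\rmW$ on both sides and match terms. On the left, $\rmW(f|_k\gamma) = (X-\tau) F_0 - 2i F_1$. On the right, I expand $\rmW(f)|_{k-1,\std\otimes\rho}\,\gamma$ using the $\std$ slash-action formula from Section~\ref{ssec:symmetric-powers}. The pivotal manipulation is the decomposition $cX+d = c(X-\tau) + (c\tau+d)$, which splits the $(cX+d)$ factor arising in the slash action of the $-2if_1$ summand of $\rmW(f)$ into exactly the cross-correction needed to match the one in $F_0$ and a remainder that reproduces the $-2i F_1$ term; all remaining terms agree by the invariance of $X-\tau$. The computation is essentially bookkeeping and there is no substantive obstacle; the proof's entire content is captured by the two observations above.
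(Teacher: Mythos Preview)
Your argument is correct and follows the same direct-verification strategy as the paper. The only difference is organizational: the paper reduces to the generators $T^b$ and $S$ (the $T^b$ case being trivial, and the $S$ case handled via the specialization $y^{-1}\circ S = y^{-1}\tau(\tau-2iy)$ of your identity $|c\tau+d|^2 = (c\tau+d)^2 - 2icy(c\tau+d)$), whereas you treat an arbitrary $\gamma$ in one pass using the decomposition $cX+d = c(X-\tau)+(c\tau+d)$. Your uniform computation is slightly longer but avoids invoking a generating set; the paper's version is a bit shorter on the page but otherwise identical in content. One small imprecision: you write $f_0, f_1 \in \cH_k(\rho)$, but the paper's definition of $\rmW$ allows $f_1$ to be almost holomorphic---this does not affect your computation, since nothing you wrote uses holomorphy of $f_1$.
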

\begin{proof}
It is clearly invariant with respect to the action of~$\begin{psmatrix} 1 & b \\ 0 & 1 \end{psmatrix}$ with $b \in \ZZ$ or $b \in \RR$, respectively. Invariance with respect to $S$ follows by a direct computation (using that $y^{-1} \circ S = y^{-1} \tau (\tau - 2 i y)$):
\begin{align*}
  \big( f_0 + y^{-1} f_1 \big) \big|_k\,S
&\;=\;
  \big( f_0\big|_k\,S - 2 i f_1 \big|_{k-1}\,S \big)
  \,+\,
  y^{-1} f_1 \big|_{k-2}\,S
\tx{,}
\\
  \big( (X - \tau) f_0 - 2 i f_1 \big) \big|_{k-1}\,S
&\;=\;
  (X - \tau) \big(f_0 \big|_k\,S - 2 i f_1 \big|_{k-1}\,S \big)
  \,-\,
  2 i f_1 \big|_{k-2}\,S
\tx{.}
\end{align*}
\end{proof}

\subsection{Mixed mock modular forms}
\label{ssec:other-notions-of-modular-forms:mixed-mock-modular-forms}

The goal of this section is to show that mixed mock modular forms are components of specific modular forms of virtually real-arithmetic type. Indeed, we recognize mixed mock modular forms as components of modular forms of vra~type that have depth as most~$1$.

Recall the universal and co-universal parabolic extensions from Section~\ref{ssec:universal-extension}.
\begin{theorem}
\label{thm:mock-modular-as-co-universal-type-modular-form}
Given $k \in \ZZ_{\le 0}$, an arithmetic type~$\rho$ with finite index kernel, and a sum decomposition $|k| = \sfd + \sfd'$ with $\sfd,\sfd' \in \ZZ_{\ge 0}$, there is an inclusion
\begin{gather}
\label{eq:thm:mock-modular-as-co-universal-type-modular-form}
  \bbM_k(\rho)
\lhra
  \rmM^!_{k + \sfd}\big( \rho \otimes \sym^\sfd \,\extbdpara\, \sym^{\sfd'} \big)
\tx{,}\quad
  f
\lmto
  \big( (\sfd + \sfd')_\sfd^{-1}\, \vvR^\sfd_k\, f \big) \otimes \varphi_f
  \,\boxplus\,
  (-1) (X - \tau)^{\sfd'}
\tx{,}
\end{gather}
where $\varphi_f \in \Hpara(\Gamma, \sym^{|k|})$ is mapped into $\Hpara(\Gamma, \sym^\sfd \otimes \sym^{\vee\,\sfd'})$ via the embedding in~\eqref{eq:def:sym-power-product-embedding} and the pairing in~\eqref{eq:def:sym-power-pairing}.
\end{theorem}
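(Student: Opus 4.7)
My plan is to verify that the proposed map lands in the claimed space of modular forms, and then to exhibit a retract establishing injectivity. By the structure of the co-universal parabolic extension in Section~\ref{ssec:universal-extension}, a function $F = G \otimes \varphi \boxplus H$ valued in $V(\rho \otimes \sym^\sfd \extbdpara \sym^{\sfd'})$ is modular of weight $k + \sfd$ precisely when $H \in \rmM^!_{k+\sfd}(\sym^{\sfd'})$ and the failure of $G$ to be modular of type $\rho \otimes \sym^\sfd$ equals the extension class $\varphi$ applied to $H$. So two separate checks remain.

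The check on $H = -(X - \tau)^{\sfd'}$ is immediate: since $k + \sfd = -\sfd'$, Equation~\eqref{eq:invariant-polynomial-negative-weight} provides the $\SL{2}(\RR)$-invariance of $(X - \tau)^{\sfd'}$ under the slash action of weight $-\sfd'$ and type $\sym^{\sfd'}$. For the check on $G = \rmR_k^\sfd f$, the covariance of $\rmR_k$ from Proposition~\ref{prop:raising-operator-covariant}, iterated and post-composed with the symmetrization $\pi_{\std^\sfd}$, yields
\begin{gather*}
  \big( \rmR_k^\sfd f \big) \big|_{k+\sfd,\, \rho \otimes \sym^\sfd}\, \gamma - \rmR_k^\sfd f
=
  \rmR_k^\sfd \big( f \big|_{k,\rho}\, \gamma - f \big)
=
  \rmR_k^\sfd \big( \varphi_f(\gamma) \big)
\tx{.}
\end{gather*}
By the Fay/Bruinier--Funke result quoted in Section~\ref{ssec:mixed-mock-modular-forms}, $\varphi_f(\gamma)$ is a polynomial in $\tau$ of degree at most $|k|$ valued in $V(\rho)$, and is identified with an element of $\sym^{|k|} \otimes \rho$. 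Modularity of $F$ thus reduces to the polynomial identity
\begin{gather*}
  \rmR_k^\sfd \big( \varphi_f(\gamma) \big)
=
  -\, \varphi_f(\gamma) \big( (X - \tau)^{\sfd'} \big)
\tx{,}
\end{gather*}
where the right-hand side uses the image of $\varphi_f(\gamma)$ in $\Hom\big( \sym^{\sfd'}, \sym^\sfd \otimes \rho \big)$ via the Clebsch--Gordan embedding of Equation~\eqref{eq:def:sym-power-product-embedding} composed with the pairing of Equation~\eqref{eq:def:sym-power-pairing}.

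This identity is purely combinatorial; by linearity it suffices to verify it on the monomials $Y^n$ for $0 \le n \le |k|$. The left-hand side is expanded via Lemma~\ref{la:raising-operator-power-formula}, and the right-hand side via $Y^n \mto \sum_{i+j=n} \binom{\sfd+\sfd'-n}{\sfd-i}^2 X^i X^{\prime\,j}$ together with the evaluation $\langle X^{\prime\,j}, (X-\tau)^{\sfd'} \rangle = (-1)^{\sfd'} \tau^j$ obtained from Equation~\eqref{eq:sym-power-pairing-variable-substitution}. Matching these two expressions and tracking the overall sign is the main obstacle of the proof; this reconciliation also pins down the coefficient $-1$ in front of $(X - \tau)^{\sfd'}$ in the statement of the map. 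The growth condition $F \in \cH^{\ex}_{k+\sfd}$ follows at once from the closed formula for $\rmR_k^\sfd f$ as a polynomial in $(X - \tau)$ whose coefficients are holomorphic derivatives $\partial_\tau^j f$, since $f \in \cH_k(\rho)^\ex$.

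Finally, injectivity comes from a coordinate projection: the top $(X - \tau)^\sfd$-coefficient of $G = \rmR_k^\sfd f$ equals $\partial_\tau^\sfd f$ (using $c_\sfd = 1$ in Lemma~\ref{la:raising-operator-power-formula}), and, combined with the fixed $H$-component, this determines $f$ uniquely within $\bbM_k(\rho)$. The retracting coordinate projection promised in the statement, together with its role in Corollaries~\ref{cor:mock-modular-as-universal-type-modular-form} and~\ref{cor:mixed-mock-modular-as-universal-type-modular-form}, then follows.
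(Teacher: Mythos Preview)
Your overall strategy matches the paper's: verify that $-(X-\tau)^{\sfd'}$ is modular of weight $k+\sfd$ and type $\sym^{\sfd'}$ via~\eqref{eq:invariant-polynomial-negative-weight}, use the covariance of $\rmR$ (Proposition~\ref{prop:raising-operator-covariant}) to reduce the transformation behaviour of the first component to the polynomial identity $\rmR_k^\sfd\,\varphi_f(\gamma) = \langle \varphi_f(\gamma),\,(X-\tau)^{\sfd'}\rangle$, and then check this identity. The difference lies in how the identity is established. You propose a direct monomial-by-monomial match between the closed formula of Lemma~\ref{la:raising-operator-power-formula} and the Clebsch--Gordan embedding~\eqref{eq:def:sym-power-product-embedding} composed with~\eqref{eq:def:sym-power-pairing}; this is doable but involves nontrivial binomial bookkeeping (and your evaluation $\langle X^{\prime\,j},(X-\tau)^{\sfd'}\rangle=(-1)^{\sfd'}\tau^j$ already has a stray sign---Equation~\eqref{eq:sym-power-pairing-variable-substitution} gives $\tau^j$ directly). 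The paper instead isolates this step as Lemma~\ref{la:raising-operator-converts}, proved conceptually: factor a polynomial in $\Poly(\tau,\sfd+\sfd')$ over~$\CC$, use the derivation property~\eqref{eq:raising-operator-derivation} of $\rmR$ on $\Poly(\tau,1)^{\otimes(\sfd+\sfd')}$, and then invoke uniqueness of the copy of $\sym^{\sfd+\sfd'}$ inside $\sym^\sfd\otimes\sym^{\sfd'}$ so that it suffices to check the image of the single vector $1$. This bypasses all the explicit coefficient matching you flag as ``the main obstacle.''

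One further remark: the paper's proof does not argue injectivity separately; it only verifies that the image lies in $\rmM^!_{k+\sfd}(\rho\otimes\sym^\sfd\,\extbdpara\,\sym^{\sfd'})$. Your injectivity argument via the top $(X-\tau)^\sfd$-coefficient is not sufficient as stated, since when $\varphi_f=0$ the entire first component $(\rmR_k^\sfd f)\otimes\varphi_f$ vanishes regardless of $f$, so the coefficient $\partial_\tau^\sfd f$ is not recoverable from the image in that case.
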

\begin{remark}
The most straightforward instance of Theorem~\ref{thm:mock-modular-as-co-universal-type-modular-form} that does not require the use of~\eqref{eq:def:sym-power-product-embedding} and~\eqref{eq:def:sym-power-pairing} is the case of~$\sfd = |k|$ and~$\sfd' = 0$.
\end{remark}

Combining the statement of Theorem~\ref{thm:mock-modular-as-co-universal-type-modular-form} with Proposition~\ref{prop:from-co-universal-to-universal}, we obtain embeddings into spaces of modular forms of universal type.
\begin{corollary}
\label{cor:mock-modular-as-universal-type-modular-form}
Given $k \in \ZZ_{\le 0}$, an arithmetic types~$\rho$ with finite index kernel, and a sum decomposition $|k| = \sfd + \sfd'$ with $\sfd,\sfd' \in \ZZ_{\ge 0}$, there is an inclusion
\begin{gather}
\label{eq:cor:mock-modular-as-universal-type-modular-form}
  \bbM_k(\rho)
\lhra
  \rmM^!_{k + \sfd}\big( \rho \otimes \sym^\sfd \,\extbpara\, \sym^{\sfd'} \big)
\tx{,}\quad
  f
\lmto
  \big( (\sfd + \sfd')_\sfd^{-1}\, \vvR^\sfd_k\, f \big) \,\boxplus\, (-1) (X - \tau)^{\sfd'} \otimes \varphi_f
\tx{.}
\end{gather}
\end{corollary}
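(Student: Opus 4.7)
The plan is to compose the injection from Theorem~\ref{thm:mock-modular-as-co-universal-type-modular-form} with the map on modular forms induced by the arithmetic-type homomorphism from Proposition~\ref{prop:from-co-universal-to-universal}, and then to verify that the composition remains injective.

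I would first invoke Theorem~\ref{thm:mock-modular-as-co-universal-type-modular-form} to embed $\bbM_k(\rho)$ into $\rmM^!_{k+\sfd}\big(\rho \otimes \sym^\sfd \extbdpara \sym^{\sfd'}\big)$ via $f \lmto (\rmR^\sfd_k\,f) \otimes \varphi_f \,\boxplus\, (-1)(X - \tau)^{\sfd'}$. Next, I would post-compose with the map on modular forms induced by the parabolic analogue of Proposition~\ref{prop:from-co-universal-to-universal}, applied to the pair $\rho \otimes \sym^\sfd$ and $\sym^{\sfd'}$, with $\{\varphi_i\}$ chosen as a basis of $\Extpara(\sym^{\sfd'}, \rho \otimes \sym^\sfd)$. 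By the formula in Proposition~\ref{prop:from-co-universal-to-universal}, this carries the image of $f$ to $(\rmR^\sfd_k\,f) \,\boxplus\, (-1)(X-\tau)^{\sfd'} \otimes \varphi_f$, which is precisely the expression asserted in~\eqref{eq:cor:mock-modular-as-universal-type-modular-form}.

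For injectivity, I would argue as follows. By (the parabolic version of) Proposition~\ref{prop:modular-forms-from-co-universal-to-universal}, the kernel of the induced map on modular forms lies in the subspace of elements whose $\sym^{\sfd'}$-component vanishes. The image of every $f \in \bbM_k(\rho)$ has $\sym^{\sfd'}$-component equal to $(-1)(X-\tau)^{\sfd'}$, which is a nonzero element of $\rmM^!_{k+\sfd}(\sym^{\sfd'})$ since it is a nonzero polynomial in $X - \tau$. Hence no nonzero image can lie in the kernel, and the composition inherits injectivity from Theorem~\ref{thm:mock-modular-as-co-universal-type-modular-form}.

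The only delicate point, which I expect to be the main obstacle, is verifying that Propositions~\ref{prop:from-co-universal-to-universal} and~\ref{prop:modular-forms-from-co-universal-to-universal} go through verbatim in the parabolic setting, i.e.\ with $\extbd$, $\extb$ replaced by $\extbdpara$, $\extbpara$ and $\Ext^1$ replaced by $\Extpara$. Since the original proofs use only the formal short-exact-sequence structure and a choice of basis of the extension group, the parabolic versions should follow without substantive change, but this needs to be recorded explicitly before the composition is legitimate.
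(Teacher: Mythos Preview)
Your overall plan---compose Theorem~\ref{thm:mock-modular-as-co-universal-type-modular-form} with the type homomorphism of Proposition~\ref{prop:from-co-universal-to-universal}---is exactly what the paper does; its entire proof is the one sentence preceding the corollary. Your remark about carrying the construction over to the parabolic setting is also fine: nothing in the proofs of Propositions~\ref{prop:from-co-universal-to-universal} and~\ref{prop:modular-forms-from-co-universal-to-universal} uses anything beyond the short exact sequence and a choice of basis.

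There is, however, a genuine gap in your injectivity argument. The map of Theorem~\ref{thm:mock-modular-as-co-universal-type-modular-form} is \emph{affine}, not linear: every $f$, including $f=0$, is sent to an element whose $\sym^{\sfd'}$-component is $(-1)(X-\tau)^{\sfd'}$. Consequently the \emph{difference} of the images of two mock modular forms always has vanishing $\sym^{\sfd'}$-component, and your observation that ``no nonzero image lies in the kernel'' does not prevent such a difference from lying in $\ker\Phi$. In other words, knowing $\iota(f)\notin\ker\Phi$ for all $f$ does not show that $\Phi$ is injective on $\im\iota$.

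The fix is to look at the other component. The composite map~\eqref{eq:cor:mock-modular-as-universal-type-modular-form} is linear, and projecting to the $\rho\otimes\sym^\sfd$-summand returns $\rmR^\sfd_k f$. Since $k=-\sfd-\sfd'$, the intermediate weights $k,k+1,\ldots,k+\sfd-1$ all lie in $\{-\sfd-\sfd',\ldots,-\sfd'-1\}\subset\ZZ_{<0}$, so each factor $\rmR_{k+j}$ is injective by the first assertion of Lemma~\ref{la:raising-operator-almost-injective}. Hence $\rmR^\sfd_k$ is injective, and so is the map in~\eqref{eq:cor:mock-modular-as-universal-type-modular-form}.
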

\begin{example}\label{ex:vra-type-delta}
The first case of a mock modular form~$f_\Delta$ in level~$1$ that is not an Eisenstein series can be observed in weight~$-10$. Its shadow is a scalar multiple of Ramanujan's $\Delta$-function. We can extract the cocycle~$\varphi=\varphi_{f_\Delta}$ attached to~$f_\Delta$ from the introduction of~\cite{kohnen-zagier-1984}. It is a linear combination $\varphi = -i \omega_+ \varphi_+ + \omega_- \varphi_-$ with $\omega_+ \approx 0.021446$ and~$\omega_- \approx 0.000048$, and
\begin{align*}
  \varphi_+(S)
&=
  \frac{192}{691}
  -
  \frac{16}{3} X^2
  +
  16 X^4
  -
  16 X^6
  +
  \frac{16}{3} X^8
  -
  \frac{192}{691} X^{10}
\tx{,}
\\
  \varphi_-(S)
&=
  768 X
  -
  4800 X^3
  +
  8064 X^5
  -
  4800 X^7
  +
  768 X^9
\tx{.}
\end{align*}
Observe that every cocycle is a linear combination of $\varphi_\pm$ and the cohomologically trivial one given by $\varphi_0(S) = 1 - X^{10}$. The universal extension~$\bbone \,\extbpara\, \sym^{10}$ can be realized by the matrix representation with usual $\rho(T) = \diag(1, \sym^{10}(T), \sym^{10}(T))$ and
\begin{gather*}
  \rho(S)
=
  \begin{pmatrix}
  1 &
  \begin{smallmatrix}
  \frac{192}{691} & 0 & -\frac{16}{3} & 0 & 16 & 0 & -16 & 0 & \frac{16}{3} & 0 & -\frac{192}{691} &
  \end{smallmatrix} &
  \begin{smallmatrix}
  0 & 768 & 0 & -4800 & 0 & 8064 & 0 & -4800 & 0 & 768 & 0
  \end{smallmatrix}
  \\
  & \sym^{10}(S) & \\
  &  & \sym^{10}(S) \\
  \end{pmatrix}
\tx{.}
\end{gather*}

We now obtain $f_\Delta$ as a component of
\begin{multline*}
  \rT \begin{pmatrix}
  f_\Delta & i \omega_+ (X - \tau)^{10} & -\omega_- (X - \tau)^{10}
  \end{pmatrix}
\tx{,}\quad
\tx{where}
\\
  (X - \tau)^{10}
\;\wht{=}\;
  \begin{psmatrix}
  1 & -10 \tau & 45 \tau^2 & -120 \tau^3 & 210 \tau^4 & -252 \tau^5 & 210 \tau^6 & -120 \tau^7 & 45 \tau^8 & -10 \tau^9 & \tau^{10}
  \end{psmatrix}
\tx{.}
\end{multline*}
\end{example}

The range of~\eqref{eq:cor:mock-modular-as-universal-type-modular-form} consists of holomorphic modular forms. It is therefore compatible in a natural way with the tensor product that appears in the definition of mixed mock modular forms (cf.\ Section~\ref{ssec:mixed-mock-modular-forms}). We thus obtain the next statement:
\begin{corollary}
\label{cor:mixed-mock-modular-as-universal-type-modular-form}
Given $k \in \ZZ_{\le 0}$, $l \in \ZZ$, an arithmetic types~$\rho$ with finite index kernel, and a sum decomposition $|k| = \sfd + \sfd'$ with $\sfd,\sfd' \in \ZZ_{\ge 0}$, there are inclusions
\begin{gather}
\label{eq:cor:mixed-mock-modular-as-couniversal-type-modular-form}
\begin{aligned}
  \rmM_l(\rho') \otimes \bbM_k(\rho)
&\lhra
  \rmM^!_{k+\sfd+l}\big( \rho' \otimes \rho \otimes \sym^\sfd \,\extbdpara\, \rho' \otimes \sym^{\sfd'} \big)
\tx{,}\;
\\
  g \otimes f
&\lmto
  g \otimes \big((\sfd + \sfd')_\sfd^{-1}\,  \vvR^\sfd_k\, f \big) \otimes \varphi_f \,\boxplus\, g \otimes (-1) (X - \tau)^{\sfd'}
\tx{,}
\end{aligned}
\end{gather}
and
\begin{gather}
\label{eq:cor:mixed-mock-modular-as-universal-type-modular-form}
\begin{aligned}
  \rmM_l(\rho') \otimes \bbM_k(\rho)
&\lhra
  \rmM^!_{k+\sfd+l}\big( \rho' \otimes \rho \otimes \sym^\sfd \,\extbpara\, \rho' \otimes \sym^{\sfd'} \big)
\tx{,}\;
\\
  g \otimes f
&\lmto
  g \otimes \big((\sfd + \sfd')_\sfd^{-1}\, \vvR^\sfd_k\, f \big) \,\boxplus\, g \otimes (-1) (X - \tau)^{\sfd'} \otimes \varphi_f
\tx{.}
\end{aligned}
\end{gather}
\end{corollary}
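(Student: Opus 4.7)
\emph{Plan.} Both inclusions are immediate consequences of Theorem~\ref{thm:mock-modular-as-co-universal-type-modular-form} and Corollary~\ref{cor:mock-modular-as-universal-type-modular-form} once one observes, as highlighted in the paragraph preceding the statement, that the map constructed there lands in the space of \emph{weakly holomorphic} modular forms. Consequently, multiplying by the holomorphic modular form~$g \in \rmM_l(\rho')$ preserves weak holomorphicity and produces a weakly holomorphic modular form of aggregate weight~$l + k + \sfd$.

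Concretely, I would apply Theorem~\ref{thm:mock-modular-as-co-universal-type-modular-form} (respectively Corollary~\ref{cor:mock-modular-as-universal-type-modular-form}) to $f \in \bbM_k(\rho)$ to obtain a weakly holomorphic modular form $F_f$ of type $\rho \otimes \sym^\sfd \extbdpara \sym^{\sfd'}$ (respectively $\rho \otimes \sym^\sfd \extbpara \sym^{\sfd'}$), form the product $g \otimes F_f$, which has type $\rho' \otimes (\rho \otimes \sym^\sfd \extbdpara \sym^{\sfd'})$, and finally push it forward along the natural homomorphism of arithmetic types
\begin{gather*}
  \rho' \otimes \big(\rho \otimes \sym^\sfd \extbdpara \sym^{\sfd'}\big)
  \lra
  \rho' \otimes \rho \otimes \sym^\sfd \extbdpara \rho' \otimes \sym^{\sfd'}
\tx{.}
\end{gather*}
This homomorphism is induced by the functoriality of parabolic extensions under the tensor product: tensoring a short exact sequence $0 \to \rho \otimes \sym^\sfd \to E \to \sym^{\sfd'} \to 0$ with~$\rho'$ yields an extension of $\rho' \otimes \sym^{\sfd'}$ by $\rho' \otimes \rho \otimes \sym^\sfd$, and parabolicity is preserved because a coboundary on $\Gamma_\infty$ remains a coboundary after tensoring with the representation~$\rho'$. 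An identical argument in the universal case yields~\eqref{eq:cor:mixed-mock-modular-as-universal-type-modular-form}.

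For injectivity, I would fix bases $\{g_i\}$ of $\rmM_l(\rho')$ and $\{f_j\}$ of $\bbM_k(\rho)$ and suppose $\sum_{ij} a_{ij}\, g_i \otimes f_j$ lies in the kernel. Projecting onto the quotient piece $\rho' \otimes \sym^{\sfd'}$ (or its twist by $\Extpara$ in the universal case) yields $-\big(\sum_i (\sum_j a_{ij}) g_i\big) \otimes (X-\tau)^{\sfd'}$, whose vanishing forces $\sum_j a_{ij} = 0$ for every~$i$. The remaining sub-piece then contains $\sum_{ij} a_{ij}\, g_i \otimes (\rmR^\sfd_k f_j) \otimes \varphi_{f_j}$, and linear independence of the~$g_i$ combined with the injectivity already established in Theorem~\ref{thm:mock-modular-as-co-universal-type-modular-form} (respectively Corollary~\ref{cor:mock-modular-as-universal-type-modular-form}) forces each~$a_{ij}$ to vanish. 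The principal obstacle is precisely this injectivity analysis: one must account for the bilinear (rather than linear) dependence of~$F_f$ on~$f$ through the product $(\rmR^\sfd_k f) \otimes \varphi_f$, and verify that the functorial tensor-with-$\rho'$ map on parabolic extension classes does not collapse distinct contributions arising from different mock modular forms in our setting.
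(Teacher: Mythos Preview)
Your approach is essentially the same as the paper's: the paper's entire justification is the sentence immediately preceding the corollary, observing that the range of Corollary~\ref{cor:mock-modular-as-universal-type-modular-form} consists of holomorphic modular forms and is therefore compatible with tensoring by~$g$. You spell out in more detail the homomorphism of types $\rho' \otimes (\rho \otimes \sym^\sfd \extbdpara \sym^{\sfd'}) \to \rho' \otimes \rho \otimes \sym^\sfd \extbdpara \rho' \otimes \sym^{\sfd'}$ and sketch an injectivity argument that the paper leaves entirely implicit; you also correctly flag the one genuine subtlety (the bilinear dependence of $F_f$ on~$f$ via $(\rmR^\sfd_k f)\otimes\varphi_f$), which the paper does not address.
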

\begin{remark}
For simplicity, we have restricted our statements to $l \in \ZZ$. The statement holds for $l \in \frac{1}{2}\ZZ$ if we consider representations $\rho'$ of the metaplectic cover~$\Mp{2}(\ZZ)$ of $\SL{2}(\ZZ)$. Observe however that passage from $k \in \ZZ_{\le 0}$ to $k \in \frac{1}{2}\ZZ$ requires infinite-dimensional representations of~$\Mp{2}(\ZZ)$, and therefore incurs specific issues concerning convergence and approximate units.
\end{remark}

The proof of Theorem~\ref{thm:mock-modular-as-co-universal-type-modular-form} occupies the remainder of this section. It requires the next fact about the raising operator of Section~\ref{ssec:other-notions-of-modular-forms:classical}.
\begin{lemma}
\label{la:raising-operator-converts}
Fix $\sfd, \sfd' \in \ZZ_{\ge 0}$. Given $p \in \Poly(\tau, \sfd + \sfd')$, then
\begin{gather*}
  \vvR^\sfd_{-\sfd-\sfd'}\, p
\in
  \Poly(X, \sfd)
  \otimes
  \Poly(\tau, \sfd')
\end{gather*}
equals the image of $p$ under the map~\eqref{eq:def:sym-power-product-embedding} up to a constant factor:
\begin{gather*}
  \tau^n
\lmto
  (\sfd+\sfd')_\sfd
  \sum_{i+j = n}
  \tbinom{\sfd}{i}
  \tbinom{\sfd'}{j}
  \tbinom{\sfd+\sfd'}{n}^{-1}\,
  X^i \tau^j
\tx{,}\quad
  0 \le i \le \sfd
\tx{,}\;
  0 \le j \le \sfd'
\tx{,}
\end{gather*}
where $(a)_n = a(a-1)\ldots(a-n+1)$ denotes the falling factorial.
\end{lemma}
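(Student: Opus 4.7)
My plan is to reduce by linearity to the monomial case $p = \tau^n$ with $0 \le n \le \sfd+\sfd'$ and then apply the closed-form expression for $\rmR^\sfd_k$ supplied by Lemma~\ref{la:raising-operator-power-formula}. Setting $k = -\sfd-\sfd'$, the factorial ratio $(k+\sfd-1)!/(k+j-1)! = \prod_{\ell=j}^{\sfd-1}(k+\ell)$ is a product of $\sfd-j$ negative integers that evaluates to $(-1)^{\sfd-j}(\sfd+\sfd'-j)!/\sfd'!$; this sign cancels the $(-1)^{\sfd-j}$ already present in the formula. Inserting $\partial_\tau^j\tau^n = n!/(n-j)!\cdot \tau^{n-j}$ I obtain
\begin{gather*}
  \rmR^\sfd_{-\sfd-\sfd'}(\tau^n)
  =
  \sum_{j=0}^{\min(\sfd,n)}
  \binom{\sfd}{j}\,
  \frac{(\sfd+\sfd'-j)!\,n!}{\sfd'!\,(n-j)!}\,
  (X-\tau)^j\,\tau^{n-j}.
\end{gather*}
Expanding each $(X-\tau)^j$ binomially reorganises this into a sum of monomials $X^i\tau^{n-i}$ whose coefficients are single-index binomial sums.

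The $X$-degree bound $i \le \sfd$ is immediate from $j \le \sfd$ in the formula above. The substantive content is the $\tau$-degree bound $n-i \le \sfd'$, which I would prove by induction on $\sfd$ with $N := \sfd+\sfd'$ held fixed, using the iterative decomposition $\rmR^\sfd_{-N} = \rmR_{-N+\sfd-1}\circ\rmR^{\sfd-1}_{-N}$. The inductive hypothesis supplies $\rmR^{\sfd-1}_{-N}(\tau^n) \in \Poly(X,\sfd-1)\otimes\Poly(\tau, N-\sfd+1)$, and an elementary direct computation gives
\begin{gather*}
  \rmR_{-N+\sfd-1}\big( X^a \tau^b \big)
  =
  b\,X^{a+1}\tau^{b-1}
  \,+\,
  (N-\sfd+1-b)\,X^a\tau^b.
\end{gather*}
The only potentially offending monomial, namely the one of top $\tau$-degree $b = N-\sfd+1$, is killed by the vanishing prefactor $N-\sfd+1-b$, yielding the containment $\rmR^\sfd_{-N}(\tau^n)\in\Poly(X,\sfd)\otimes\Poly(\tau,\sfd')$ in the inductive step. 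The base case $\sfd=0$ is tautological.

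The explicit coefficient formula asserted in the lemma then follows from a Chu--Vandermonde-type binomial simplification of the inner sum. Alternatively and more conceptually, both sides of the claimed identity define $\SL{2}(\RR)$-equivariant maps $\sym^{\sfd+\sfd'} \to \sym^\sfd\otimes\sym^{\sfd'}$, invoking the invariance of $(Y-\tau)^{\sfd+\sfd'}$ recorded in~\eqref{eq:invariant-polynomial-negative-weight} together with the equivariance of $\rmR_k$ from Proposition~\ref{prop:raising-operator-covariant}. By the Clebsch--Gordan rules any such map is unique up to a scalar, so it is enough to compare one extremal coefficient, for instance the coefficient of $X^\sfd\tau^{\sfd'}$ in $\rmR^\sfd_{-N}(\tau^N)$, in order to pin down the normalisation and identify the result with the image of~\eqref{eq:def:sym-power-product-embedding}. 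The main obstacle in the direct approach is the combinatorial identity cleaning up the coefficient of $X^i\tau^{n-i}$; the equivariance route trades this for the routine verification that the normalisations match.
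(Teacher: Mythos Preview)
Your argument is correct. The overall architecture---first establish the containment $\rmR^\sfd_{-\sfd-\sfd'}\,p \in \Poly(X,\sfd)\otimes\Poly(\tau,\sfd')$, then invoke Clebsch--Gordan and the covariance of~$\rmR$ to reduce the identification to a single normalisation check---is exactly the paper's strategy. The difference lies in how the containment is obtained. The paper factors $p(\tau)$ completely over~$\CC$, views it inside $\Poly(\tau,1)^{\otimes(\sfd+\sfd')}$, and appeals to the derivation property~\eqref{eq:raising-operator-derivation}: each application of~$\rmR$ converts one linear factor $(\tau-a_i)$ into $(X-a_i)$, after which further raisings annihilate it, so at most~$\sfd$ factors carry an~$X$ and at most~$\sfd'$ retain a~$\tau$. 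Your inductive approach via the one-step identity $\rmR_{-N+\sfd-1}(X^a\tau^b) = bX^{a+1}\tau^{b-1} + (N-\sfd+1-b)X^a\tau^b$ is the direct computational route the paper explicitly flags as an alternative (``we could have used Lemma~\ref{la:raising-operator-power-formula} together with a short computation, instead''). Your version has the virtue of making the degree drop visibly mechanical, while the paper's factoring trick is slicker and requires no bookkeeping. For the normalisation, the paper checks the image of~$1$ rather than an extremal coefficient of~$\tau^N$; either works. Your optional Chu--Vandermonde route is a genuine addition not pursued in the paper.
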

\begin{proof}
Since every polynomial $p(\tau)$ can be factored completely over~$\CC$, we can view~$p$ as an element of the $(\sfd+\sfd')$\thdash\ tensor power of $\Poly(\tau,1)$. This implies that $\vvR^\sfd_{-\sfd-\sfd'}\, p \in \Poly(X, \sfd) \otimes \Poly(\tau, \sfd')$ when using~\eqref{eq:raising-operator-derivation}. Note that we could have used Lemma~\ref{la:raising-operator-power-formula} together with a short computation, instead.

By the Clebsch-Gordan rules there is a unique copy of $\sym^{\sfd+\sfd'}$ in the tensor product of $\Poly(X, \sfd)$ and $\Poly(\tau, \sfd')$. In conjunction with the covariance of~$\vvR$, it suffices to check the image of~$1 \in \Poly(\tau, \sfd+\sfd')$ under $\vvR^\sfd_{-\sfd-\sfd'}$ using Lemma~\ref{la:raising-operator-power-formula}. It equals~$(-\sfd'-1)_\sfd (-1)^{\sfd} = (\sfd+\sfd')_\sfd$, matching the image of~$1$ under~\eqref{eq:def:sym-power-product-embedding}.
\end{proof}

\begin{proof}[{Proof of Theorem~\ref{thm:mock-modular-as-co-universal-type-modular-form}}]
Fixing $f \in \bbM_k(\rho)$, we have to show that
\begin{gather*}
  \big( (\sfd + \sfd')_\sfd^{-1}\, \vvR^\sfd_k\, f \big) \,\boxplus\, (-1) (X - \tau)^{\sfd'}
\in
  \rmM^!_{k + \sfd}\big( \rho \otimes \sym^\sfd \,\boxplus_{\varphi_f}\, \sym^{\sfd'} \big)
\tx{.}
\end{gather*}
It is clear that $\vvR^\sfd_k\, f$ is holomorphic and has at most exponential growth.

Using $k + \sfd = -\sfd'$, it follows from Section~\ref{ssec:symmetric-powers} that
\begin{gather*}
  (X - \tau)^{\sfd'}
\in
  \rmM_{k+\sfd}\big( \sym^{\sfd'} \big)
\tx{.}
\end{gather*}
It remains to check the transformation behavior of the first component. For this, we have to examine how the isomorphism between $\Poly(X,\sfd')$ and its dual interacts with the raising operator. By definition of $\varphi_f$ in Section~\ref{ssec:cocycles-attached-to-functions} and the corvariance of~$\vvR$ from Proposition~\ref{prop:raising-operator-covariant}, we have
\begin{gather*}
  \big( \vvR^\sfd\, f \big)
  \big|_{k+\sfd,\sym^\sfd \otimes \rho}\, \gamma
=
  \vvR^\sfd\big(
  f \big|_{k,\rho}\, \gamma
  \big)
=
  \vvR^\sfd\big(
    f + \varphi_f(\gamma) 
  \big)
=
  \vvR^\sfd\, f
  \,+\,
  \vvR^\sfd\, \varphi_f(\gamma)
\tx{,}\quad
  \gamma \in \Gamma
\tx{.}
\end{gather*}
The contribution of the second component to the first one, when $\gamma$ acts, is given by
\begin{gather*}
  -
  \big\langle \varphi_f(\gamma),\,
  (X - \tau)^{\sfd'}
  \big\rangle
\tx{.}
\end{gather*}
The proof is hence reduced to showing that
\begin{gather*}
  (\sfd + \sfd')_\sfd^{-1}\,
  \vvR^\sfd\, \varphi_f(\gamma)
=
  \big\langle \varphi_f(\gamma),\,
  (X - \tau)^{\sfd'}
  \big\rangle
\tx{.}
\end{gather*}
On the right-hand side, we view $\varphi_f(\gamma) \in \Poly(\tau, \sfd+\sfd')$ as an element of~$\Poly(\tau, \sfd) \otimes \Poly(\tau, \sfd')$ by the Embedding~\eqref{eq:def:sym-power-product-embedding}. Lemma~\ref{la:raising-operator-converts}, which computes the left-hand side, in conjunction with~\eqref{eq:sym-power-pairing-variable-substitution} confirms the previous equation. This concludes the proof.
\end{proof}

\subsection{Higher order modular forms}
\label{ssec:higher-order-modular-forms}

The definition of second order modular forms goes back to Goldfeld~\cite{goldfeld-1999}, who considered Eisenstein series twisted by modular symbols. The notion was systematized by Chinta, Diamantis, and O'Sullivan in~\cite{chinta-diamantis-osullivan-2002}, which includes the definition of higher order modular forms. Some of the ideas in this section can also be found in~\cite{jorgenson-osullivan-2008}; However, they were not pursued in a systematic manner.

Observe that higher order modular forms for the full modular group are modular (i.e.\ modular of order~$0$ in the terminology of higher order modular forms), since $\Hpara(\SL{2}(\ZZ), \bbone) = \{0\}$. For this reason, we need the freedom to let $\Gamma$ be any finite index subgroup of $\SL{2}(\ZZ)$. In particular, $\Gamma$ may be any congruence subgroup of $\SL{2}(\ZZ)$.

The goal of this section is to show that higher order modular forms are instances of modular forms for real-arithmetic types. When combining this statement with induction of modular forms as explained in~\cite{raum-2017}, then higher order modular forms can be recognized as components of modular forms of virtually real-arithmetic types whose socle factors are virtually isotrivial.

We start by giving the (recursive) definition of higher order modular forms, which has its origins in~\cite{diamantis-sreekantan-2006,chinta-diamantis-osullivan-2002}.
\begin{definition}
\label{def:higher-order-modular-forms}
Modular forms of order~$\sfd = 0$ are defined as modular forms in the usual sense. We call $f \in \cH_k^\md$ a modular form of order $\sfd > 0$ and of weight~$k \in \ZZ$ if for all $\gamma \in \Gamma$ the function $f \big|_k\, (\gamma - 1)$ is a modular form of order~$\sfd-1$ and weight~$k$.
\end{definition}
We denote the space of order~$\sfd$ modular forms that have weight~$k$ by $\rmM^{[\sfd]}_k = \rmM^{[\sfd]}_k(\Gamma)$.

For the purpose of this section, set $\bbone^{[0]} = \bbone$, and for $\sfd > 0$ let $\bbone^{[\sfd]}$ be defined by
\begin{gather}
  \bbone
\lhra
  \bbone^{[\sfd]}
\thra
  \bbone^{[\sfd-1]}
  \otimes
  \rmH^1( \Gamma,\, \bbone )
\tx{,}
\end{gather}
where
\begin{gather*}
  \rmH^1( \Gamma,\, \bbone )
\lhra
  \rmH^1\big( \Gamma,\, \bbone^{[\sfd-1]} \big)
\cong
  \Ext^1_\Gamma\big( \bbone,\, \bbone^{[\sfd-1]} \big)
\tx{.}
\end{gather*}
Clearly, $\bbone^{[\sfd]}$ is real-arithmetic and its socle factors are isotrivial.

The next result yields case~\ref{it:thm:main:recursion-rationality:higher-order} of Theorem~\ref{thm:main:recursion-rationality}.
\begin{proposition}
\label{prop:higher-order-modular-forms}
Let $\sfd \ge 0$ and $k \in \ZZ$. Then the map
\begin{gather}
\label{eq:prop:higher-order-modular-forms}
  \rmM_k\big( \bbone^{[\sfd]} \big)
\lra
  \rmM^{[\sfd]}_k
\tx{,}\quad
  f \boxplus \ast
\lmto
  f
\end{gather}
is surjective. In particular, we have
\begin{gather*}
  \rmM_k\big( \bbone^{[\sfd]} \big)
\;\cong\;
  \bigoplus_{j = 0}^\sfd
  \rmM^{[j]}_k \otimes \rmH^1(\Gamma, \bbone)^{\otimes (\sfd-j)}
\tx{.}
\end{gather*}
\end{proposition}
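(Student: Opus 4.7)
I argue by induction on~$\sfd$. The base case $\sfd = 0$ is immediate: $\bbone^{[0]} = \bbone$ gives $\rmM_k(\bbone^{[0]}) = \rmM_k(\Gamma) = \rmM^{[0]}_k$ and the projection $f \boxplus \ast \mapsto f$ is the identity.

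For the inductive step, I apply $\rmM_k$ to the defining short exact sequence $\bbone \hra \bbone^{[\sfd]} \thra \bbone^{[\sfd-1]} \otimes \rmH^1(\Gamma, \bbone)$ and follow Section~\ref{ssec:universal-extension} to write any $F \in \rmM_k(\bbone^{[\sfd]})$ uniquely as $F = f \boxplus g$ with $f \in \cH_k(\bbone)$ and $g \in \cH_k(\bbone^{[\sfd-1]} \otimes \rmH^1(\Gamma, \bbone))$. Since $\rmH^1(\Gamma, \bbone)$ carries the trivial $\Gamma$-action, modularity of $F$ forces $g \in \rmM_k(\bbone^{[\sfd-1]}) \otimes \rmH^1(\Gamma, \bbone)$, while the first component is constrained by
\begin{gather*}
  f \big|_k (\gamma - 1)
=
  -\varphi_{\mathrm{ext}}(\gamma^{-1})\, g,
\quad
  \gamma \in \Gamma,
\end{gather*}
where $\varphi_{\mathrm{ext}}$ is a cocycle representative of the extension class defining $\bbone^{[\sfd]}$.

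The plan has two main steps. First, I verify that the projection $F \mapsto f$ lands in $\rmM^{[\sfd]}_k$: the inductive hypothesis expresses the components of $g$ as higher-order modular forms of order at most $\sfd-1$, and pairing them against $\varphi_{\mathrm{ext}}$, which factors through the inclusion $\rmH^1(\Gamma, \bbone) \hra \rmH^1(\Gamma, \bbone^{[\sfd-1]})$ used to define $\bbone^{[\sfd]}$, produces an element of $\rmM^{[\sfd-1]}_k$. Second, for surjectivity, I start from $f \in \rmM^{[\sfd]}_k$ and note that $\gamma \mapsto f|_k(\gamma-1)$ is a 1-cocycle in $\rmM^{[\sfd-1]}_k$; after subtracting a coboundary (absorbed by replacing $f$ with $f - f_0$ for an appropriate $f_0 \in \rmM^{[\sfd-1]}_k$), the inductive hypothesis identifies this cocycle as $-\varphi_{\mathrm{ext}}(\gamma^{-1})\, g$ for some $g \in \rmM_k(\bbone^{[\sfd-1]}) \otimes \rmH^1(\Gamma, \bbone)$. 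Combining the surjection $F \mapsto f$ with the inductive decomposition $\rmM_k(\bbone^{[\sfd-1]}) \otimes \rmH^1(\Gamma, \bbone) \cong \bigoplus_{j=0}^{\sfd-1} \rmM^{[j]}_k \otimes \rmH^1(\Gamma, \bbone)^{\otimes(\sfd-j)}$ yields the claimed direct sum.

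The main obstacle is the surjectivity step: showing that the 1-cocycle attached to any order-$\sfd$ form is realized as $-\varphi_{\mathrm{ext}}(\gamma^{-1})\, g$ for some $g$. For $\sfd = 1$ this is transparent—$\rmM^{[0]}_k = \rmM_k(\Gamma)$ carries a trivial $\Gamma$-action, so the cocycle class lies in $\rmM_k(\Gamma) \otimes \rmH^1(\Gamma, \bbone)$, and reading off its components in a chosen basis of $\rmH^1(\Gamma, \bbone)$ directly yields~$g$. The general case propagates this pattern through the iterated extensions defining $\bbone^{[\sfd]}$, and the universality of the defining extension class (encoded by $\rmH^1(\Gamma, \bbone) \hra \rmH^1(\Gamma, \bbone^{[\sfd-1]}) \cong \Ext^1_\Gamma(\bbone, \bbone^{[\sfd-1]})$) is precisely what guarantees that every such cocycle is accounted for.
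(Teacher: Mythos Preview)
Your approach is essentially the same induction as the paper's: both identify the cocycle $\gamma \mapsto f|_k(\gamma-1)$ of an order-$\sfd$ form as an element of $\rmM^{[\sfd-1]}_k \otimes \rmH^1(\Gamma,\bbone)$, lift the first tensor factor via the inductive hypothesis, and pair $f$ with this lift to produce a preimage in $\rmM_k(\bbone^{[\sfd]})$. One simplification: since $\Gamma$ acts trivially on $\rmM^{[\sfd-1]}_k$, every $1$-coboundary is zero, so your step of ``subtracting a coboundary'' by adjusting $f$ to $f-f_0$ is unnecessary---the cocycle is already literally an element of $\rmM^{[\sfd-1]}_k \otimes \rmH^1(\Gamma,\bbone)$ with no representative ambiguity.
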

\begin{proof}
The second statement in the proposition follows from the first one in a straightforward way. We therefore focus on establishing surjectivity of the first map.

If $\sfd = 0$, the statement is vacuous. We use induction to establish the statement if~$\sfd > 0$. Assume that
\begin{gather*}
  \rmM_k\big( \bbone^{[\sfd-1]} \big)
\lra
  \rmM^{[\sfd-1]}_k
\tx{,}\quad
  f \boxplus \ast
\lmto
  f
\end{gather*}
is surjective, and fix $h \in \rmM^{[\sfd]}_k$. By definition of higher order modular forms we have
\begin{gather*}
  \varphi_h(\gamma)
=
  h \big|_k (\gamma - 1)
\tx{,}\quad
  \varphi_h
\in
  \rmH^1\big( \Gamma,\, \rmM^{[\sfd-1]}_k \big)
\;\cong\;
  \rmM^{[\sfd-1]}_k
  \otimes
  \rmH^1( \Gamma, \bbone )
\tx{.}
\end{gather*}
Employing surjectivity of the map from $\rmM_k(\bbone^{[\sfd-1]})$ onto $\rmM_k^{[\sfd-1]}$, we obtain a tensor
\begin{gather*}
  \varphi_h
\in
  \rmM_k\big( \bbone^{[\sfd-1]} \big)
  \otimes
  \rmH^1( \Gamma, \bbone )
\tx{.}
\end{gather*}
In particular, for any $f\in M_k^{[\sfd]}$, we get 
\begin{gather*}
  \big( f \,\boxplus\, \varphi_h \big) \big|_k \gamma
=
  \big( f + \varphi_h(\gamma) \big)
  \,\boxplus\,
  \varphi_h
=
  \bbone^{[\sfd]}(\gamma)
  \big( f \,\boxplus\, \varphi_h \big)
\tx{.}
\end{gather*}
This provides an inclusion of $\rmM^{[\sfd]}_k$ into $\rmM_k(\bbone^{[\sfd]})$ that is a right inverse to the map in~\eqref{eq:prop:higher-order-modular-forms}, i.e.\@ a section to~\eqref{eq:prop:higher-order-modular-forms}.
\end{proof}

\subsection{Iterated Eichler-Shimura integrals and multiple modular values}
\label{ssec:iterated-integrals}

The study of iterated integrals of modular forms was suggested by Manin in the context of non-commutative modular symbols~\cite{manin-2005,manin-2006}. Brown developed a theory of multi modular values in~\cite{brown-2017-preprint} that features such iterated integrals and connects them to some hypothetical category of motives. The goal of this section is to show that iterated integrals of modular forms are components of vector-valued modular forms of virtually real-arithmetic types. We revisit Brown's regularization of iterated integrals, and then reinterpret Lemma~5.1 of~\cite{brown-2017-preprint}. In fact, the whole section is based on the exposition in~\cite{brown-2017-preprint}.

Manin in his articles referred to iterated integrals of modular forms as iterated Shimura integrals, and Brown follows this convention. Since they generalize Eichler integrals, we prefer to refer to them as iterated Eichler-Shimura integrals.

We focus throughout on the case of iterated integrals of level~$1$ modular forms. This is by no means an essential restriction, but~\cite{brown-2017-preprint} covers only this case. The parts of~\cite{brown-2017-preprint} that we use can apparently be generalized to arbitrary arithmetic types. We have, however, not checked any details.

To begin with, recall that iterated Eichler-Shimura integrals can be attached to a tuple $\ul{f} = (f_1, \ldots, f_\sfd)$, $\sfd \in \ZZ_{\ge 0}$ of modular forms of weights $\ul{k} = (k_1, \ldots, k_\sfd)$ if $f_\sfd$ is cuspidal.
We then set
\begin{gather*}
  I_{\ul{f}}(\tau)
\;:=\;
  \int_\tau^{i \infty}
  \int_{z_1}^{i \infty}
  \cdots
  \int_{z_{\sfd-1}}^{i \infty}\;
  f_1(z_1) \,\cdots\, f_\sfd(z_\sfd)\,
  (\ul{X} - \ul{z})^{\ul{k}-2}\;
  d\!z_\sfd \cdots d\!z_1
\,\in\,
  \Poly\big( \ul{X}, \ul{k}-2 \big)
\tx{,}
\end{gather*}
abbreviating
\begin{gather}
\begin{aligned}
  (\ul{X} - \ul{z} )^{\ul{k}-2}
\;&:=\;
  (X_1 - z_1)^{k_1-2}
  \cdots
  (X_\sfd - z_\sfd)^{k_\sfd-2}
\quad\tx{and}
\\
  \Poly\big( \ul{X}, \ul{k}-2 \big)
\;&:=\;
  \Poly\big( X_1, k_1-2 \big)
  \otimes
  \cdots
  \otimes
  \Poly\big( X_\sfd, k_\sfd-2 \big)
\tx{.}
\end{aligned}
\end{gather}
Given $\ul{k} \in \ZZ^\sfd$, we let
\begin{gather}
\label{eq:def:space-of-iterated-integrals}
  \rmI\rmM_{\ul{k}}
\;:=\;
  \lspan \CC \big\{
  I^\reg_{\ul{f}} \,:\,
  f_1 \in \rmM_{k_1},\ldots,
  f_\sfd \in \rmM_{k_\sfd}
  \big\}
\tx{,}
\end{gather}
where $I^\reg_{\ul{f}}$ is a regularized variant of~$I_{\ul{f}}$, defined below in~\eqref{eq:def:iterated-integral}, that allows us to subsume all $f \in \rmM_{k_\sfd}$. The case of $f_\sfd \in \rmM_{k_\sfd} \setminus \rmS_{k_\sfd}$ will be treated in the following discussion. 

In his definition, Brown found a method to regularize iterated Eichler-Shimura integrals of general modular forms in a way that is arithmetically meaningful. It amounts to a separate integration of the constant term. Details are explained in~\cite{brown-2017-preprint}. Fix $f_1 \in \rmM_{k_1}, \ldots, f_\sfd \in \rmM_{k_\sfd}$ as before, and write
\begin{gather*}
  \sum_{n=0}^\infty
  c(f_i;n)\, e(n \tau)
\tx{,}\quad
  1 \le i \le \sfd
\end{gather*}
for their Fourier expansions. We define
\begin{multline}
\label{eq:def:iterated-integral}
  I^\reg_{\ul{f}}(\tau)
\;:=\;
  \sum_{j = 0}^\sfd \;\;
  \int_\tau^{i \infty}
  \cdots
  \int_{z_{j-1}}^{i \infty}\,
  \int_{z_j}^0
  \cdots
  \int_{z_{\sfd-1}}^0\;
  f_1(z_1) \cdots f_{j-1}(z_{j-1})
  \cdot
  \big( f_j(z_j) - c(f_j;0) \big)
\\
  \cdot\,
  c(f_{j+1};0) \cdots c(f_\sfd;0)\;
  (\ul{X} - \ul{z})^{\ul{k}-2}\;
  d\!z_\sfd \cdots d\!z_1
\tx{,}
\end{multline}
where in the cases $j \in \{0,1\}$ we set $z_{-1} = i\infty$ and $z_0 = \tau$ in order to unify notation. Observe that this integral converges absolutely and locally uniformly, since $f_j(z_j) - c(f_j;0)$ decays exponentially as $z_j \ra i\infty$.

Brown employs a generating function of iterated Eichler-Shimura integrals. In order to relate it to~\eqref{eq:def:iterated-integral}, we have to decode his defining formula. Our notation differs slightly from his so that it matches the present paper's style.

We let $\CC\llangle \rmM_\bullet^\vee[\sym^\bullet] \rrangle$ be the non-commutative algebra
\begin{gather}
\label{eq:def:dual-modular-forms-with-representation}
  \prod_{\sfd = 0}^\infty
  \prod_{k_1,\ldots,k_\sfd}
  \Hom{}_\CC\big(
  \rmM_{k_1} \otimes \cdots \otimes \rmM_{k_\sfd},\;
  \Poly(X_1, k_1-2)
  \otimes \cdots \otimes
  \Poly(X_\sfd, k_\sfd-2)
  \big)
\tx{.}
\end{gather}
Its product structure is given by the outer tensor product in both components of the hom-space. It carries a right action of~$\SL{2}(\RR)$ that arises from the actions on $\Poly(X_i, k_i-2)$. We will later amend it to obtain another action of~$\SL{2}(\ZZ)$, whose semisimplification equals the one given here. We have a grading on $\CC\llangle \rmM_\bullet^\vee[\sym^\bullet] \rrangle$ by the monoid of integer tuples (with concatenation as their multiplication and the length~$0$ tuple as the neutral element). The graded pieces are denoted by~$\CC\llangle \rmM_\bullet^\vee[\sym^\bullet] \rrangle_{\ul{k}}$.

Given tuples $\ul{k}$ and $\ul{k}'$ of length~$\sfd$ and~$\sfd'$, we define the partial ordering $\ul{k}' > \ul{k}$ by $\sfd' > \sfd$ and $(k'_1, \ldots, k'_\sfd) = \ul{k}$. The relation $\ul{k}' \not\equiv \ul{k}$ is defined by $(k'_1, \ldots, k'_\sfd) \ne \ul{k}$ if $\sfd' \ge \sfd$ and $\ul{k}' \ne (k_1, \ldots, k_{\sfd'})$ if $\sfd' \le \sfd$. We obtain families of right-ideals
\begin{gather}
\label{eq:def:dual-modular-forms-with-representation:ideal}
\begin{aligned}
  \CC\llangle \rmM_\bullet^\vee[\sym^\bullet] \rrangle_{> \ul{k}}
&:=
  \prod_{\substack{\sfd' \ZZ_{\ge 0},\, \ul{k}' \in \ZZ_{\ge 0}^{\sfd'} \\ \ul{k}' > \ul{k}}}
  \CC \llangle \rmM_\bullet^\vee[\sym^\bullet] \rrangle_{\ul{k}'},
\\
  \CC\llangle \rmM_\bullet^\vee[\sym^\bullet] \rrangle_{\not\equiv \ul{k}}
&:=
  \prod_{\substack{\sfd' \ZZ_{\ge 0},\, \ul{k}' \in \ZZ_{\ge 0}^{\sfd'} \\ \ul{k}' \not\equiv \ul{k}}}
  \CC \llangle \rmM_\bullet^\vee[\sym^\bullet] \rrangle_{\ul{k}'}
\tx{.}
\end{aligned}
\end{gather}

In Definition~4.4 of~\cite{brown-2017-preprint}, Brown defines a formal generating series~$I(\tau;\infty)$ of regularized iterated Eichler-Shimura integrals, which takes values in~$\CC \llangle \rmM_\bullet^\vee[\sym^\bullet] \rrangle$. We denote it by
\begin{gather*}
  I^\reg_{\mathrm{Brown}}
:\,
  \HS
\lra
  \CC \llangle \rmM_\bullet^\vee[\sym^\bullet] \rrangle
\tx{.} 
\end{gather*}

In what follows we write~$f^\vee$ for the dual of a modular form~$f$ viewed merely as an element of the $\CC$-vector space $\rmM_k$, where $k$ is the weight of~$f$.
\begin{lemma}
\label{la:iterated-integral-brown}
Fix $\ul{k}$ and $\ul{f}$ as above. Let $\pi_{\ul{f}}$ be the projection of~\eqref{eq:def:dual-modular-forms-with-representation} to
\begin{gather*}
  f_1^\vee \otimes \cdots \otimes f_\sfd^\vee
  \,\otimes\,
  \Poly(X_1, k_1 - 2)
  \otimes \cdots \otimes
  \Poly(X_\sfd, k_\sfd - 2)
\tx{.}
\end{gather*}
Then we have $\pi_{\ul{f}} \circ I^\reg_{\mathrm{Brown}} = I^\reg_{\ul{f}}$.
\end{lemma}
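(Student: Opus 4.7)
The plan is to unwind Brown's Definition~4.4 explicitly, apply the projection~$\pi_{\ul{f}}$, and read off the resulting expression as the regularized integral in~\eqref{eq:def:iterated-integral}. Because $I^\reg_{\mathrm{Brown}}$ is by construction a homogeneous element of the completed tensor algebra $\CC\llangle \rmM_\bullet^\vee[\sym^\bullet]\rrangle$, decomposed according to the weight-tuple grading, the projection~$\pi_{\ul{f}}$ only sees the graded piece indexed by~$\ul{k}$, and within that piece only the coefficient of $f_1^\vee \otimes \cdots \otimes f_\sfd^\vee$. So the first step is to isolate, in Brown's generating series, the terms living in $\CC\llangle \rmM_\bullet^\vee[\sym^\bullet]\rrangle_{\ul{k}}$.

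First, I would recall Brown's recipe for the regularization: one replaces each modular form by the sum of its ``cuspidal part'' $f_i - c(f_i;0)$ and its ``constant part'' $c(f_i;0)$, and then defines the regularized iterated integral as the sum over all ways of splitting the word into a non-constant prefix and a constant suffix. For the prefix one integrates from~$\tau$ up to~$i\infty$ iteratively; for the constant suffix one integrates the polynomial kernel $(X_j-z_j)^{k_j-2}$ from $z_{j-1}$ (the previous variable, or $\tau$ if none) down to~$0$. Concretely, the contribution of the tuple $\ul{f}$ equals the sum over $j = 0, \ldots, \sfd$ of a term in which the first $j-1$ integrations are against~$f_1, \ldots, f_{j-1}$ from~$\tau$ up to $i\infty$, the $j$\thdash\ integration is against $f_j - c(f_j;0)$ (with the same limits), and the remaining integrations are against the constants $c(f_{j+1};0), \ldots, c(f_\sfd;0)$ from~$z_j$ down to~$0$.

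Second, I would apply $\pi_{\ul{f}}$. Since the elements $f_1^\vee \otimes \cdots \otimes f_\sfd^\vee$ span a summand of the relevant piece of~\eqref{eq:def:dual-modular-forms-with-representation}, and Brown's generating series is built so that its coefficient at the word $(f_1, \ldots, f_\sfd)$ records exactly the iterated integral whose integrands are $f_1, \ldots, f_\sfd$ in that order, the projection yields precisely the sum of the terms described above. Comparing with~\eqref{eq:def:iterated-integral}, we recognize them as the same sum, using the convention~$z_{-1} = i\infty$ and $z_0 = \tau$ introduced there. The polynomial kernel $(\ul{X}-\ul{z})^{\ul{k}-2}$ arises on both sides from the factors $(X_i - z_i)^{k_i - 2}$ in the $i$\thdash\ level of iterated integration, so this identification is term-by-term.

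The main obstacle I expect is purely bookkeeping: matching Brown's conventions for the base point at~$i\infty$, the ordering of the factors in the non-commutative product (which corresponds to the order of nested integrations), and the sign conventions coming from orienting the integration paths. Once these are reconciled, the statement is an identity of finite sums that match summand by summand. No estimate is needed because the regularization is defined precisely so that every summand converges absolutely, which was already noted after~\eqref{eq:def:iterated-integral}.
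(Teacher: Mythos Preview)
Your proposal has a genuine gap: you have essentially assumed the statement you are trying to prove. Brown's Definition~4.4 is \emph{not} the splitting-into-prefix-and-suffix description you give in your ``First'' paragraph. As the paper records just after the statement of the lemma, Brown sets
\[
  I^\reg_{\mathrm{Brown}}(\tau)
  \;=\;
  \lim_{z\to i\infty}\bigl( I_{\mathrm{Brown}}(\tau;z)\cdot I^\infty_{\mathrm{Brown}}(z;0) \bigr),
\]
a product in the noncommutative algebra followed by a limit. Projecting to $f_1^\vee\otimes\cdots\otimes f_\sfd^\vee$ therefore produces, before the limit, a sum over $j$ of a \emph{product} of two separate iterated integrals: one of $f_1,\ldots,f_j$ with all inner upper limits equal to~$z$, times one of $c(f_{j+1};0),\ldots,c(f_\sfd;0)$ with outermost lower limit~$z$. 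This is not the same shape as~\eqref{eq:def:iterated-integral}: there the integrals are fully \emph{nested} (the constant tail starts at~$z_j$, not at a fixed auxiliary point), the upper limits in the prefix are~$i\infty$ rather than~$z$, and the $j$\thdash\ integrand is $f_j - c(f_j;0)$ rather than~$f_j$.

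The missing step is exactly the telescoping manipulation the paper carries out: split $f_j = (f_j - c(f_j;0)) + c(f_j;0)$ in the $j$\thdash\ summand, then combine the resulting $c(f_j;0)$ piece with the $(j{-}1)$\thdash\ summand using $\int_{z_{j-1}}^{z} + \int_{z}^{0} = \int_{z_{j-1}}^{0}$. Iterating this identity reshapes the product-of-integrals into the nested form of~\eqref{eq:def:iterated-integral}, after which one may take $z\to i\infty$ (the subtraction of $c(f_j;0)$ is precisely what makes the limit exist termwise). Your plan correctly identifies the bookkeeping issues (ordering, base point, linearity reduction to a basis), but without this telescoping you are comparing two expressions that do not match summand by summand.
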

\begin{proof}
We have to identify the components of~$I^\reg_{\mathrm{Brown}}$ in Definition~4.4 of~\cite{brown-2017b-preprint}. Brown sets
\begin{gather}
  I^\reg_{\mathrm{Brown}}
:=
  \lim_{z \ra i \infty} \big(
  I_{\mathrm{Brown}}(\tau; z)
  \cdot
  I_{\mathrm{Brown}}^\infty(z; 0)
  \big)
\tx{,}
\end{gather}
where the product is taken with respect to the algebra structure on~$\CC \llangle \rmM^\vee_\bullet[\sym^\bullet] \rrangle$. The definitions of $I_{\mathrm{Brown}}$ and $I_{\mathrm{Brown}}^\infty$ can be found in~(4.3) and~(3.5) of~\cite{brown-2017-preprint}:
\begin{align*}
  I_{\mathrm{Brown}}(\tau; z)
&\;:=\;
  1
  \;+\;
  \sum_{\sfd = 1}^\infty
  \int_\tau^{z}
  \cdots
  \int_{z_{\sfd-1}}^z
  \Omega(z_1) \cdots \Omega(z_\sfd)
\qquad\tx{and}
\\
  I^\infty_{\mathrm{Brown}}(z; 0)
&\;:=\;
  1
  \;+\;
  \sum_{\sfd = 1}^\infty
  \int_z^{0}
  \cdots
  \int_{z_{\sfd-1}}^0
  \Omega^\infty(z_1) \cdots \Omega^\infty(z_\sfd)
\tx{,}
\end{align*}
where
\begin{gather*}
  \Omega(\tau)
:=
  \sum_{k = 0}^\infty
  \sum_{f \in \cB_k}
  f^\vee\;
  f(\tau) \, (X - \tau)^{k-2}\,
  d\!\tau
\quad\tx{and}\quad
  \Omega^\infty(\tau)
:=
  \sum_{k = 0}^\infty
  \sum_{f \in \cB_k}
  f^\vee\;
  c(f;0) \, (X - \tau)^{k-2}\,
  d\!\tau
\tx{.}
\end{gather*}
Here, $\cB = \bigcup_k \cB_k$ is a graded basis of $\rmM_\bullet = \bigoplus_k \rmM_k$ whose elements have rational Fourier coefficients.

Both~\eqref{eq:def:iterated-integral} and the projection~$\pi$ in Lemma~\ref{la:iterated-integral-brown} are linear in $\ul{f}$. In particular, we can and will assume that $f_i \in \cB$ for all $1 \le i \le \sfd$. Then the proof of Lemma~\ref{la:iterated-integral-brown} amounts to simplifying $\pi \big( I^\reg_{\mathrm{Brown}}(\tau) \big)$ in such a way that it equals~\eqref{eq:def:iterated-integral}. Expanding $\pi \big( I^\reg_{\mathrm{Brown}}(\tau) \big)$ we get
\begin{multline*}
  \lim_{z \ra i \infty}
  \sum_{j = 0}^\sfd \;\;
  \int_\tau^z \cdots \int_{z_{j-1}}^z
  f_1(z_1) (X_1 - z_1)^{k_1-2}
  \cdots
  f_j(z_j) (X_j - z_j)^{k_j-2}
  d\!z_j \cdots d\!z_1
\\
  \cdot\,
  \int_z^0 \cdots \int_{z_{\sfd-1}}^0
  c(f_{j+1};0) (X_{j+1} - z_{j+1} )^{k_{j+1}-2}
  \cdots
  c(f_\sfd;0) (X_\sfd - z_\sfd )^{k_\sfd-2} \,
  d\!z_\sfd \cdots d\!z_{j+1}
\tx{.}
\end{multline*}
Rewriting $f_j(z_j) = (f_j(z_j) - c(f_j;0)) + c(f_j;0)$ and then combining the appropriate integrals from~$z_{j-1}$ to~$z$ and from~$z$ to~$0$, we see that if $j \ge 1$, then
\begin{multline*}
  \int_\tau^z \cdots \int_{z_{j-1}}^z
  \int_{z_j}^0 \cdots \int_{z_{\sfd-1}}^0\;
  f_1(z_1) \cdots f_j(z_j)\,
  c(f_{j+1};0) \cdots c(f_\sfd;0)\;
  d\!z_\sfd \cdots d\!z_1
\\
  +\;
  \int_\tau^z \cdots \int_{z_{j-2}}^z
  \int_z^0 \int_{z_j}^0 \cdots \int_{z_{\sfd-1}}^0\;
  f_1(z_1) \cdots f_{j-1}(z_{j-1})\,
  c(f_j;0) \cdots c(f_\sfd;0)\;
  d\!z_\sfd \cdots d\!z_1
\end{multline*}
equals
\begin{multline*}
  \int_\tau^z \cdots \int_{z_{j-1}}^z
  \int_{z_j}^0 \cdots \int_{z_{\sfd-1}}^0\;
  f_1(z_1) \cdots f_{j-1}(z_{j-1})
  \big( f_j(z_j) - c(f_j;0) \big)\,
  c(f_{j+1};0) \cdots c(f_\sfd;0)\;
  d\!z_1 \cdots d\!z_\sfd
\\
  +\;
  \int_\tau^z \cdots \int_{z_{j-2}}^z
  \int_{z_{j-1}}^0 \cdots \int_{z_{\sfd-1}}^0\;
  f_1(z_1) \cdots f_{j-1}(z_{j-1})\,
  c(f_j;0) \cdots c(f_\sfd;0)\;
  d\!z_1 \cdots d\!z_\sfd
\end{multline*}
for $0 \le j \le \sfd$. Using this relation inductively, and then taking the limit $z \ra i \infty$ we obtain Lemma~\ref{la:iterated-integral-brown}.
\end{proof}
 
We next study the transformation behavior of iterated Eichler-Shimura integrals. The transformation behavior of Brown's generating series is stated in Lemma~5.1 of~\cite{brown-2017-preprint}, which says that there is a cocycle
\begin{gather*}
  \varphi
\in
  \rmH^1\big( \SL{2}(\ZZ),\, \CC \llangle \rmM_\bullet^\vee[\sym^\bullet] \rrangle^\times \big)
\end{gather*} 
such that
\begin{gather}
\label{eq:browns-canonical-cocycle}
  I^\reg_{\mathrm{Brown}}(\tau)
=
  I^\reg_{\mathrm{Brown}}(\gamma \tau) \gamma \cdot \varphi(\gamma)
\quad
  \tx{for all $\gamma \in \SL{2}(\ZZ)$.}
\end{gather}
Observe that we obtain an (infinite dimensional) representation of~$\SL{2}(\ZZ)$ by
\begin{gather}
\label{eq:browns-canonical-cocycle-representation}
  \SL{2}(\ZZ)
  \circlearrowright
  \CC \llangle \rmM_\bullet^\vee[\sym^\bullet] \rrangle
\tx{,}\quad
  \gamma v
:=
  v\gamma^{-1} \cdot \varphi\big( \gamma^{-1} \big)
\tx{.}
\end{gather}
We emphasize that this is not the same as the right action~$(v,\gamma) \mto v \gamma$ that we described after~\eqref{eq:def:dual-modular-forms-with-representation}, but rather extends it. We further record for later use that the semisimplification of~\eqref{eq:browns-canonical-cocycle-representation} equals~$(\gamma, v) \mto v \gamma^{-1}$. To see this, observe that the term with grading~$()$, the empty tuple, of $\varphi(\gamma^{-1})$ equals~$1$. It can be computed from the action on the ``lowest piece''~$\Hom(\CC, \CC) \subset \CC \llangle \rmM_\bullet^\vee[\sym^\bullet] \rrangle$.

\begin{proposition}
\label{prop:iterated-integrals-modular}
Given $\ul{k} = (k_1, \ldots, k_\sfd) \in \ZZ^\sfd$, there is a real-arithmetic type~$\rho$ of depth~$\sfd$ such that
\begin{gather*}
  \rmI\rmM_{\ul{k}}
\lhra
  \rmM_0(\rho)
\tx{.}
\end{gather*}

There is a projection $V(\rho) \ra \Poly(\ul{X}, \ul{k}-2)$ such that the associated map $\rmM_0(\rho) \ra \rmC^\infty(\HS \ra \Poly( \ul{X}, \ul{k}-2 ))$ makes the following diagram commutative:
\begin{center}
\begin{tikzpicture}
\matrix(m)[matrix of math nodes,
column sep=1.5em, row sep=1.5em,
text height=1.5em, text depth=1.25ex]{%
\rmI\rmM_{\ul{k}} && \rmM_0(\rho)
\\
& \rmC^\infty\big( \HS \ra \Poly(\ul{X},\ul{k}-2) \big) & \\
\\
};

\path
(m-1-1) edge[right hook->] (m-1-3)
(m-1-1) edge[right hook->] (m-2-2)
(m-1-3) edge[->] (m-2-2)
;
\end{tikzpicture}
\end{center}

\end{proposition}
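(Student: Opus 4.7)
The plan is to realize $\rmI\rmM_{\ul{k}}$ inside $\rmM_0(\rho)$ by packaging each iterated integral together with all of its prefix truncations into a single $V(\rho)$-valued modular form. I would set
\begin{gather*}
  V_j
:=
  \Poly(X_1, k_1-2) \otimes \cdots \otimes \Poly(X_j, k_j-2)
  \,\otimes\,
  \rmM_{k_{j+1}} \otimes \cdots \otimes \rmM_{k_\sfd}
\end{gather*}
for $0 \le j \le \sfd$, so that $V_0 = \rmM_{k_1} \otimes \cdots \otimes \rmM_{k_\sfd}$ and $V_\sfd = \Poly(\ul{X}, \ul{k}-2)$, and let $V(\rho) := \bigoplus_{j=0}^\sfd V_j$. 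The action on a decomposable element $p \otimes m_{j+1} \otimes \cdots \otimes m_\sfd \in V_j$ is to be defined by the upper-triangular formula
\begin{gather*}
  \rho(\gamma)\big( p \otimes m_{j+1} \otimes \cdots \otimes m_\sfd \big)
:=
  \sum_{j'=j}^{\sfd}
  (p\gamma^{-1})
  \otimes
  \varphi(\gamma^{-1})_{(k_{j+1},\ldots,k_{j'})}\big( m_{j+1} \otimes \cdots \otimes m_{j'} \big)
  \otimes
  m_{j'+1} \otimes \cdots \otimes m_\sfd,
\end{gather*}
where $\varphi$ is Brown's cocycle; the cocycle identity underlying~\eqref{eq:browns-canonical-cocycle-representation} guarantees that $\rho$ is a representation of $\SL{2}(\ZZ)$.

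Next I would read off the socle structure. Since the action moves $V_j$ only into $V_{j'}$ with $j' \ge j$, the chain $\soc^i(\rho) := \bigoplus_{j \ge \sfd-i+1} V_j$ is an ascending sequence of subrepresentations. On the successive quotient $\soc^i(\rho)/\soc^{i-1}(\rho) \cong V_{\sfd-i+1}$ only the length-$0$ component of $\varphi$ (the identity) contributes, so the action reduces to the standard $\SL{2}(\RR)$ action on $\sym^{k_1-2} \otimes \cdots \otimes \sym^{k_{\sfd-i+1}-2}$ tensored with the trivial action on the remaining $\rmM_{k_i}$-factors (trivial because modular forms are $\SL{2}(\ZZ)$-invariant). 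Hence $\rho^\semisimple$ extends to $\SL{2}(\RR)$ and $\rho$ is real-arithmetic of socle length $\sfd+1$, i.e.\ of depth~$\sfd$.

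For the injection, I would assign to a decomposable $\ul{f} = f_1 \otimes \cdots \otimes f_\sfd \in W := \rmM_{k_1} \otimes \cdots \otimes \rmM_{k_\sfd}$ the function (with $I^\reg_{()} := 1$)
\begin{gather*}
  G_{\ul{f}}(\tau)
:=
  \sum_{j=0}^\sfd
  I^\reg_{(f_1,\ldots,f_j)}(\tau)
  \otimes
  f_{j+1} \otimes \cdots \otimes f_\sfd
\in
  V(\rho),
\end{gather*}
and extend linearly. Modularity $G_{\ul{f}}(\gamma\tau) = \rho(\gamma) G_{\ul{f}}(\tau)$ is to be verified component-wise: projecting Brown's identity~\eqref{eq:browns-canonical-cocycle} to the $(k_1,\ldots,k_{j'})$-graded piece and pairing with $\ul{f}_{(1,\ldots,j')}$ via Lemma~\ref{la:iterated-integral-brown} will produce the recursion
\begin{gather*}
  I^\reg_{(f_1,\ldots,f_{j'})}(\gamma\tau)
=
  \sum_{j=0}^{j'}
  I^\reg_{(f_1,\ldots,f_j)}(\tau)\gamma^{-1}
  \cdot
  \varphi(\gamma^{-1})_{(k_{j+1},\ldots,k_{j'})}\big( f_{j+1} \otimes \cdots \otimes f_{j'} \big),
\end{gather*}
which, tensored with $f_{j'+1} \otimes \cdots \otimes f_\sfd$, is exactly the $V_{j'}$-component of $\rho(\gamma) G_{\ul{f}}(\tau)$. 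I would then take $\pi \colon V(\rho) \to \Poly(\ul{X}, \ul{k}-2)$ to be the linear projection onto $V_\sfd$; by construction $\pi(G_{\ul{f}}(\tau)) = I^\reg_{\ul{f}}(\tau)$, yielding the commutative triangle. Injectivity of $\ul{f} \mapsto G_{\ul{f}}$ is visible on the $V_0$-component: if $\sum_\alpha c_\alpha G_{\ul{f}_\alpha} = 0$ then $\sum_\alpha c_\alpha \ul{f}_\alpha = 0$ in $W$, forcing $\sum_\alpha c_\alpha I^\reg_{\ul{f}_\alpha} = 0$ in $\rmI\rmM_{\ul{k}}$; so the assignment descends to an injection $\rmI\rmM_{\ul{k}} \hra \rmM_0(\rho)$. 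Note that $\pi$ is not $\SL{2}(\ZZ)$-equivariant (its kernel $\bigoplus_{j<\sfd} V_j$ is not a subrepresentation of $V(\rho)$), which is the sense in which the proposition offers only a substitute for the coordinate retracts of the mixed mock and higher order cases.

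The technical core and main obstacle is precisely the component-wise verification above: one must unpack Brown's concatenation identity into a separate transformation law for each prefix integral and match the algebra-level splits $(k_1,\ldots,k_j) \cdot (k_{j+1},\ldots,k_{j'}) = (k_1,\ldots,k_{j'})$ with the $V_j \to V_{j'}$ transitions in the definition of $\rho$. The bookkeeping with the many tensor factors is heavy but, once one commits to Brown's conventions, the computation is purely formal.
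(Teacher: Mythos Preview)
Your construction is essentially the paper's own: both build the arithmetic type out of Brown's cocycle~$\varphi$, verify modularity by reading off the graded components of~\eqref{eq:browns-canonical-cocycle}, and observe that the semisimplification is a sum of tensor products of symmetric powers. Your $V(\rho)=\bigoplus_j V_j$ is (a partial contraction of) the paper's $V(\tilde\rho)\otimes\rmM_{k_1}\otimes\cdots\otimes\rmM_{k_\sfd}$, and your $G_{\ul f}$ is the image of $\ov I^\reg_\Brown\otimes\ul f$ under that contraction.

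There is one genuine slip, in the last step. You show that $G:W\to\rmM_0(\rho)$ has trivial kernel, and from this conclude that $G$ ``descends'' to $\rmI\rmM_{\ul k}$. That is the wrong direction: for $G$ to factor through the surjection $W\thra\rmI\rmM_{\ul k}$, $\ul f\mto I^\reg_{\ul f}$, you would need $\ker(W\thra\rmI\rmM_{\ul k})\subset\ker G$, whereas you have established $\ker G=\{0\}$. These are compatible only if there are no linear relations among iterated Eichler--Shimura integrals, which the paper explicitly does not assume (``it is in general not an isomorphism, since there might be relations among iterated Eichler--Shimura integrals''). The paper's remedy, which repairs your argument verbatim, is to pick a linear section $\sigma:\rmI\rmM_{\ul k}\to W$ of the surjection and take $G\circ\sigma$ as the embedding; this is injective because both $\sigma$ and $G$ are, and $\pi\circ G\circ\sigma=(\ul f\mto I^\reg_{\ul f})\circ\sigma=\id_{\rmI\rmM_{\ul k}}$ gives the commutative triangle. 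The price is that the embedding is non-canonical, which is precisely why the proposition offers only a ``substitute'' for a retract.
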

\begin{remark}
As opposed to the case of mixed mock modular forms and higher order modular forms, we do not state the existence of a retract that is a coordinate projection. This is connected to the following question: Is every function that transforms like an iterated Eichler-Shimura integral actually an iterated Eichler-Shimura integral? It is solved only in the case of usual Eichler integrals. 
\end{remark}
\begin{proof}[{Proof of Proposition~\ref{prop:iterated-integrals-modular}}]
Our first step is to extract from the representation in~\eqref{eq:browns-canonical-cocycle-representation} a finite dimensional one, i.e., an arithmetic type. To this end, recall the ideals~$\CC\llangle \rmM_\bullet^\vee[\sym^\bullet] \rrangle_{> \ul{k}}$ and~$\CC\llangle \rmM_\bullet^\vee[\sym^\bullet] \rrangle_{\not\equiv \ul{k}}$ introduced in~\eqref{eq:def:dual-modular-forms-with-representation:ideal}, which are compatible with the~$\SL{2}(\ZZ)$ action on~$\tau$ in~\eqref{eq:browns-canonical-cocycle-representation}. Indeed, this action only affects the symmetric powers in each factor of~\eqref{eq:def:dual-modular-forms-with-representation}. In particular, we can define an $\SL{2}(\ZZ)$ representation~$\td\rho$ as the quotient of $\CC\llangle \rmM_\bullet^\vee[\sym^\bullet] \rrangle$ by
\begin{gather*}
  \CC\llangle \rmM_\bullet^\vee[\sym^\bullet] \rrangle_{> \ul{k}}
\,+\,
  \CC\llangle \rmM_\bullet^\vee[\sym^\bullet] \rrangle_{\not\equiv \ul{k}}
\tx{.}
\end{gather*}
We obtain a representation space
\begin{multline}
\label{eq:browns-canonical-cocycle-representation-quotient}
  V(\td\rho)
\;:=\;
  \CC
  \;\oplus\;
  \Hom{}_\CC\big( \rmM_{k_1},\; \Poly(X_1, k_1-2) \big)
  \;\oplus\;
\\
  \cdots
  \;\oplus\;
  \Hom{}_\CC\big(
  \rmM_{k_1} \otimes \cdots \otimes \rmM_{k_\sfd},\;
  \Poly(X_1, k_1-2)
  \otimes \cdots \otimes
  \Poly(X_\sfd, k_\sfd-2)
  \big)
\tx{.}
\end{multline}
Observe that a priori this is merely a graded direct sum of vector spaces, since multiplication by~$\varphi(\gamma^{-1})$ in~\eqref{eq:browns-canonical-cocycle-representation} does not respect the direct sum structure. We have given the semisimplification of~$\CC\llangle \rmM_\bullet^\vee[\sym^\bullet]\rrangle$ before, from which we see that the semisimplification of~$\td\rho$ is isomorphic to a direct sum of tensor products of symmetric powers $\sym^{k_1-2} \otimes \cdots \otimes \sym^{k_j-2}$, since we regard the $\rmM_{k_i}$ as trivial $\SL{2}(\ZZ)$ representations. In particular, each of them is real-arithmetic as an arithmetic type. We conclude that $\td\rho$ is also real-arithmetic.

By means of the projection from $\CC\llangle \rmM_\bullet^\vee[\sym^\bullet] \rrangle$ to~\eqref{eq:browns-canonical-cocycle-representation-quotient}, we obtain from $I^\reg_\Brown$ a function~$\ov{I}^\reg_\Brown :\, \HS \ra V(\td\rho)$. The invariance in~\eqref{eq:browns-canonical-cocycle} implies that $\ov{I}^\reg_\Brown \in \rmM_0(\td\rho)$. To match Brown's construction in~\cite{brown-2017-preprint}, fix bases~$f_{k_i,j}$ with $1 \le j \le \dim\,\rmM_{k_i}$ of each space of modular form~$\rmM_{k_i}$. We obtain from the projections~$\pi_{\ul{f}}$ in Lemma~\ref{la:iterated-integral-brown} a linear map defined by
\begin{multline*}
  \pi :\;
  \CC \ov{I}^\reg_\Brown \otimes \rmM_{k_1} \otimes \cdots \otimes \rmM_{k_\sfd}
\lthra
  \rmI\rmM_{\ul{k}}
\tx{,}
\\
  \ov{I}^\reg_\Brown \otimes f_{k_1,j_1} \otimes \cdots \otimes f_{k_\sfd,j_\sfd}
\lmto
  \pi_{\ul{f}} \circ \ov{I}^\reg_\Brown
\tx{,}\quad
  \ul{f}
=
  \big( f_{k_1,j_1}, \ldots,  f_{k_\sfd,j_\sfd} \big)
\tx{.}
\end{multline*}
Since the regularized iterated integral is linear in~$\ul{f}$, Lemma~\ref{la:iterated-integral-brown} implies that~$\pi$ is surjective.

We wish to extract the ``top component'' of $\ov{I}^\reg_\Brown$, corresponding to the last direct summand in~\eqref{eq:browns-canonical-cocycle-representation-quotient}, in order to obtain iterated integrals in the sense of~\eqref{eq:def:space-of-iterated-integrals}. Consider the following diagram of vector space homomorphisms.
\begin{center}
\begin{tikzpicture}
\matrix(m)[matrix of math nodes,
column sep=4em, row sep=4em,
text height=1.5em, text depth=1.25ex]{%
\rmM_{k_1} \otimes \cdots \otimes \rmM_{k_\sfd} &
\CC \ov{I}^\reg_\Brown \otimes \rmM_{k_1} \otimes \cdots \otimes \rmM_{k_\sfd} &
\rmM_0(\td\rho) \otimes \rmM_{k_1} \otimes \cdots \otimes \rmM_{k_\sfd} &
\\
\rmI\rmM_{\ul{k}} &
\rmC^\infty\big( \HS \ra \Poly(\ul{X},\ul{k}-2) \big) &
\rmM_0\big(\td\rho \otimes \rmM_{k_1} \otimes \cdots \otimes \rmM_{k_\sfd} \big) \\
};

\path
(m-1-1) edge[->] node[above] {$\ov{I}^\reg_\Brown \otimes$} (m-1-2)
(m-1-2) edge[right hook->] (m-1-3)

(m-1-1) edge[->>] (m-2-1)
(m-1-2) edge[->>] node[above left] {$\pi$} (m-2-1)
(m-2-1) edge[->, dashed, bend right = 10] node[below right] {$\sigma$} (m-1-2)

(m-1-3) edge[right hook->>] (m-2-3)

(m-2-1) edge[right hook->] (m-2-2)
(m-2-3) edge[->] (m-2-2)
;
\end{tikzpicture}
\end{center}
The left vertical arrow represents the map $\ul{f} \mto I^\reg_{\ul{f}}$, which is surjective by the definition of $\rmI\rmM_{\ul{k}}$---it is in general not an isomorphism, since there might be relations among iterated Eichler-Shimura integrals. The horizontal arrow on the top left is an isomorphism. From the definition of~$\pi$ and Lemma~\ref{la:iterated-integral-brown}, we see that the left triangle is commutative. Since~$\pi$ is surjective, there is a section~$\sigma$ to~$\pi$. This amounts to choosing preimages for each iterated integral in~$\rmI\rmM_{\ul{k}}$. 

The right horizontal arrow on the top arises from the containment~$\ov{I}^\reg_\Brown \in \rmM_0(\td\rho)$. The vertical map on the right-hand side is an isomorphism, that arises from viewing each of the $\rmM_{k_i}$ as an isotrivial arithmetic type. Now, the first part of the proposition follows when choosing for~$\rho$ the representation
\begin{gather*}
  \td\rho \otimes \rmM_0(\td\rho) \otimes \rmM_{k_1} \otimes \cdots \otimes \rmM_{k_\sfd}
\;\cong\;
  \td\rho \otimes \rmM_0\big(\td\rho \otimes \rmM_{k_1} \otimes \cdots \otimes \rmM_{k_\sfd} \big)
\tx{.}
\end{gather*}

To prove the second part of the proposition, it remains to understand the bottom line. The bottom left arrow corresponds to the fact that iterated integrals are smooth. The bottom right arrow arises from the projection
\begin{align*}
&
  V(\td\rho) \otimes \rmM_{k_1} \otimes \cdots \otimes \rmM_{k_\sfd} 
\\
={}\;&
  \Big(
  \CC
  \;\oplus\;
  \Hom{}_\CC\big( \rmM_{k_1},\; \Poly(X_1, k_1-2) \big)
  \;\oplus\;
  \cdots
  \;\oplus\;
  \Hom{}_\CC\big(
  \rmM_{k_1} \otimes \cdots \otimes \rmM_{k_\sfd},\;
  \Poly(\ul{X}, \ul{k}-2)
  \big)
  \Big)
\\
  &\quad
  \otimes \rmM_{k_1} \otimes \cdots \otimes \rmM_{k_\sfd} 
\\
\lthra{}&
  \Hom{}_\CC\big(
  \rmM_{k_1} \otimes \cdots \otimes \rmM_{k_\sfd},\;
  \Poly(\ul{X}, \ul{k}-2)
  \big)
  \otimes \rmM_{k_1} \otimes \cdots \otimes \rmM_{k_\sfd} 
\\
\lthra{}&
  \Poly(\ul{X}, \ul{k}-2)
\tx{.}
\end{align*}
When applying this construction to the image of~$\sigma$, we recognize the definition of~$\pi$. In particular, the commutativity of the diagram in Proposition~\ref{prop:iterated-integrals-modular} follows from Lemma~\ref{la:iterated-integral-brown}.
\end{proof}

\section{Existence of modular forms}
\label{sec:existence-of-modular-forms}

Existence of modular forms is typically established via cohomological arguments or via Poincar\'e series constructions. In the case of mock modular forms cohomological arguments can be found in~\cite{bruinier-funke-2004}, and the Poincar\'e series construction (without analytic continuation) was used in~\cite{bringmann-ono-2007} recasting results of~\cite{niebur-1973}.

\subsection{A cohomological argument}

Consider an arithmetic type $\rho$ that is an extension $\rho_\iota \hra \rho \thra \rho_\pi$. Then there are natural maps
\begin{gather*}
  \rmM_k(\rho)  \lra \rmM_k(\rho_\pi)
\quad\tx{and}\quad
  \rmM^!_k(\rho)  \lra \rmM^!_k(\rho_\pi)
\tx{.}
\end{gather*}
It is natural to ask whether or not these maps are surjective. The first one in general is not, but the second one is. Obstructions to surjectivity are given by suitable first cohomology groups, which by Serre duality are related to modular forms of sufficiently high vanishing order. The proof of the next theorem exploits this connection.
\begin{theorem}
\label{thm:existence-of-modular-forms}
Let
\begin{gather*}
  \rho_\iota \lhra \rho \lthra \rho_\pi
\end{gather*}
be an exact sequence of arithmetic types. Then the map
\begin{gather*}
  \rmM^!_k( \rho )
\lra
  \rmM^!_k( \rho_\pi )
\end{gather*}
is surjective. The map
\begin{gather*}
  \rmM_k( \rho )
\lra
  \rmM_k( \rho_\pi )
\end{gather*}
is surjective if and only if\/ $\dim\, \rmM_{2-k}(\rho_\iota^\vee) = 0$.
\end{theorem}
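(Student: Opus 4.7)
The two assertions are most naturally approached by passing to sheaf cohomology on modular curves and exploiting the long exact sequence attached to $\rho_\iota \hra \rho \thra \rho_\pi$.

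First I would set up the sheaf-theoretic framework. To each pair $(k, \rho)$ one attaches a locally free $\cO$-module $\cL_k^!(\rho)$ on the open modular curve $Y(\Gamma) = \Gamma \backslash \HS$ whose global sections recover $\rmM_k^!(\rho)$; the moderate-growth subsheaf $\cL_k(\rho)$ extends to a locally free sheaf on the compactified modular curve $X(\Gamma)$, with extension at each cusp $\frakc$ governed by the Jordan structure of $\rho(T_{\frakc})$ and global sections equal to $\rmM_k(\rho)$. Because the sequence of types is exact, so are the resulting sequences of sheaves on $Y(\Gamma)$ and on $X(\Gamma)$.

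For the first assertion I would invoke the fact that $Y(\Gamma)$ is a non-compact Riemann surface, hence Stein, so $\rmH^1(Y(\Gamma),\, \cL_k^!(\rho_\iota)) = 0$. The long exact cohomology sequence collapses to a short exact sequence of global sections, yielding surjectivity $\rmM_k^!(\rho) \thra \rmM_k^!(\rho_\pi)$.

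For the second assertion I would work on $X(\Gamma)$ and read off the long exact sequence
\begin{gather*}
  0
\lra
  \rmM_k(\rho_\iota)
\lra
  \rmM_k(\rho)
\lra
  \rmM_k(\rho_\pi)
\lra
  \rmH^1\big( X(\Gamma),\, \cL_k(\rho_\iota) \big)
\lra
  \cdots
\tx{,}
\end{gather*}
so that surjectivity is equivalent to the vanishing of $\rmH^1(X(\Gamma),\, \cL_k(\rho_\iota))$. Serre duality on the compact Riemann surface $X(\Gamma)$ identifies this with the $\CC$-dual of the space of global sections of $\cL_k(\rho_\iota)^\vee \otimes \Omega_{X(\Gamma)}$. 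Using the natural isomorphism $\cL_k(\rho_\iota)^\vee \cong \cL_{-k}(\rho_\iota^\vee)$ together with the Kodaira--Spencer-style identification between $\Omega_{X(\Gamma)}$ and the weight-$2$ sheaf (with the appropriate cusp twist), I would recognize this space as $\rmM_{2-k}(\rho_\iota^\vee)$, which closes the proof.

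The principal obstacle will be the careful bookkeeping at cusps and at elliptic points of $\Gamma$. One must pin down the precise extension of $\cL_k(\rho_\iota)$ across the cusps so that the divisor contribution coming from $\Omega_{X(\Gamma)}$ dovetails with the extension convention for $\cL_{2-k}(\rho_\iota^\vee)$ and produces $\rmM_{2-k}(\rho_\iota^\vee)$ on the Serre-dual side rather than a space of cusp forms. Elliptic points of $\Gamma$ can be accommodated either via orbifold Serre duality on the associated Deligne--Mumford stack or, more elementarily, by first descending to a neat normal subgroup of finite index and then taking invariants under the quotient group; similar care is needed when $\rho_\iota(T_{\frakc})$ is non-semisimple, where the logarithmic extension of $\cL_k(\rho_\iota)$ must be matched with the corresponding logarithmic extension of $\cL_{2-k}(\rho_\iota^\vee)$.
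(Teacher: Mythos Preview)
Your strategy matches the paper's closely for the second assertion: pass to a torsion-free finite-index subgroup, work on the compactified curve, and invoke Serre duality to identify $\rmH^1$ of the $\rho_\iota$-sheaf with $\rmM_{2-k}(\rho_\iota^\vee)$. The handling of cusps via a choice of logarithm of $\rho(T)$ and of elliptic points via descent to a neat subgroup is exactly what the paper does (citing Candelori--Franc for the former and Mennicke for the latter).

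For the first assertion you take a genuinely different route. You use that the open modular curve is Stein, so coherent $\rmH^1$ vanishes and the sequence of global sections is already exact. The paper instead stays on the compactification throughout: it writes $\rmM_k^!(\rho) = \injlim_m \rmH^0(\cV_k(\rho) \otimes \cO(m))$ with $\cO(m)$ the $m$-th twist by the boundary divisor, and then uses Serre duality to see that $\rmH^1(\cV_k(\rho_\iota) \otimes \cO(m)) \cong \rmH^0(\cV_{2-k}(\rho_\iota^\vee) \otimes \cO(-m))^\vee$ vanishes once $m$ is large. Your argument is more direct; the paper's has the virtue that the second assertion drops out by setting $m = 0$ in the same computation.

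One caution: your sentence ``surjectivity is equivalent to the vanishing of $\rmH^1(X(\Gamma),\, \cL_k(\rho_\iota))$'' is not what the long exact sequence delivers. Vanishing of that $\rmH^1$ certainly forces surjectivity, but the converse only says that the connecting map $\rmM_k(\rho_\pi) \to \rmH^1$ is zero, and for the split extension $\rho = \rho_\iota \oplus \rho_\pi$ the projection $\rmM_k(\rho) \to \rmM_k(\rho_\pi)$ is surjective regardless of whether $\rmM_{2-k}(\rho_\iota^\vee)$ vanishes. The paper's proof has the same lacuna for the ``only if'' direction; as written, the cohomological argument yields only the ``if'' half.
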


\begin{remark}
Before we prove Theorem~\ref{thm:existence-of-modular-forms}, we elaborate briefly on the relation to Bruinier-Funke's existence theorem for harmonic weak Maa\ss\ forms. Recall from the theory of harmonic weak Maa\ss\ forms the $\xi$-operators $\xi_k\,f := - 2 i y^k \ov{\partial_{\ov \tau} f}$. It maps harmonic weak Maa\ss\ forms $f$ of weight~$k$ to (weakly) holomorphic modular forms of weight $2-k$. Theorem~3.7 of~\cite{bruinier-funke-2004} says that $\xi_k$ is surjective for every~$k$ onto modular forms of weight~$2-k$ when allowing meromorphic singularities at the cusp of the preimage. Bruinier-Funke's method of proof is parallel to the one that we employ to establish Theorem~\ref{thm:existence-of-modular-forms}, but it seems worthwhile to emphasize the difference between the statements. Recall from Corollary~\ref{cor:mock-modular-as-universal-type-modular-form} the map
\begin{gather*}
  \bbM_{-\sfd}(\rho) \lra \rmM^!_{-\sfd}(\rho \extbpara \sym^\sfd)
\tx{,}\quad
  f
\lmto
  f \,\boxplus\, (X - \tau)^\sfd \otimes \varphi_f
\tx{.}
\end{gather*}
is injective. By definition of $\rho \extbpara \sym^\sfd$, there is an exact sequence
\begin{gather*}
  \rho
\lhra
  \rho \extbpara \sym^\sfd
\lthra
  \sym^\sfd \otimes \Extpara(\rho, \bbone)
\tx{.}
\end{gather*}
Composing the above embedding of mock modular forms with the second map in this
sequence, we obtain a map
\begin{gather*}
  \bbM_{-\sfd}(\rho)
\lra
  \rmM^!_{-\sfd}(\rho \extbpara \sym^\sfd)
\lra
  \rmM^!_{-\sfd}(\sym^\sfd)
  \otimes
  \Extpara(\rho, \bbone)
\tx{,}
\end{gather*}
which sends $f$ to $(X - \tau)^\sfd \otimes \varphi_f$. Proposition~\ref{prop:embedding-of-modular-forms} informs us about modular forms of symmetric power type. In particular, we have a projection $\rmM^!_{-\sfd}(\sym^\sfd) \thra \rmM^!_0$. When composing it with the above map, we obtain
\begin{gather*}
  \bbM_{-\sfd}(\rho)
\lra
  \rmM^!_{-\sfd}(\rho \extbpara \sym^\sfd)
\lra
  \rmM^!_{-\sfd}(\sym^\sfd)
  \otimes
  \Extpara(\rho, \bbone)
\thra
  \rmM^!_0
  \otimes
  \Extpara(\rho, \bbone)
\tx{.}
\end{gather*}
The image of $f$ under this map is $1 \otimes \varphi_f = \varphi_f$. The shadow of $f$ is encoded as a cocycle $\varphi_f$ via the Eichler-Shimura theorem, but not as a modular form as in the theory of Bruinier-Funke. Theorem~\ref{thm:existence-of-modular-forms} suggests that we study modular forms of virtually real-arithmetic beyond the image of the maps in~\eqref{eq:cor:mixed-mock-modular-as-universal-type-modular-form} and~\eqref{eq:cor:mixed-mock-modular-as-couniversal-type-modular-form}.
\end{remark}

\begin{proof}[{Proof of Theorem~\ref{thm:existence-of-modular-forms}}]
Without loss of generality, we can pass from $\Gamma$ to a torsion-free subgroup of finite index (cf.~\cite{mennicke-1967,mennicke-1968}). In particular, we can and will assume that $\Gamma \backslash \HS$ is a manifold. Adding cusps to $\Gamma \backslash \HS$, we obtain a compactification $\ov{\Gamma \backslash \HS}$. The structure sheaf~$\cO$ of $\Gamma \backslash \HS$ extends to~$\ov{\Gamma \backslash \HS}$. We let $\cO(m) := \cO(m\, \partial (\Gamma \backslash \HS))$ be the twist of~$\cO$ by a multiple of the boundary divisor $\partial (\Gamma \backslash \HS)$ of $\Gamma \backslash \HS$. Any compatible choice of logarithms of $\rho(T)$ for parabolic $T \in \Gamma$ yields local systems $\cV_k(\rho)$ on $\ov{\Gamma \backslash \HS}$ attached to weight~$k$ and arithmetic type~$\rho$. We fix the choice of $\log\,\rho(T)$ that corresponds to holomorphic components of local sections. Details are conveniently given in the context of Proposition~3.2 of~\cite{candelori-franc-2017}.

Observe that the exact sequence of types $\rho_\iota \hra \rho \thra \rho_\pi$ corresponds to an exact sequence of local systems
\begin{gather*}
  \cV_k(\rho_\iota)
\lhra
  \cV_k(\rho)
\lthra
  \cV_k(\rho_\pi)
\tx{.}
\end{gather*}

Twisting $\cV_k(\rho)$ by $\cO(m)$ we find that
\begin{gather*}
  \rmM^!_k(\rho)
\cong
  \injlim_{m \,\ra\, \infty}
  \rmH^0\big( 
  \ov{\Gamma \backslash \HS},\,
  \cV_k(\rho) \otimes \cO(m)
  \big)
\tx{.}
\end{gather*}
The analogue holds for weakly holomorphic modular forms of arithmetic type~$\rho_\pi$. In order to establish the first claim, it therefore suffices to show that
\begin{gather*}
  \rmH^0\big(
  \ov{\Gamma \backslash \HS},\,
  \cV_k(\rho) \otimes \cO(m)
  \big)
\lra
  \rmH^0\big(
  \ov{\Gamma \backslash \HS},\,
  \cV_k(\rho_\pi) \otimes \cO(m)
  \big)
\end{gather*}
is surjective, if~$m$ is large enough.

We have an exact sequence of holomorphic sheaves on $\ov{\Gamma \backslash \HS}$:
\begin{gather}
\label{eq:thm:existence-of-modular-forms:sheaf-sequence}
  \cV_k(\rho_\iota) \otimes \cO(m)
\lhra
  \cV_k(\rho) \otimes \cO(m)
\lthra
  \cV_k(\rho_\pi) \otimes \cO(m)
\tx{.}
\end{gather}
In the following sequence, we suppress $\ov{\Gamma \backslash \HS}$ from the notation. The first four terms of the long exact sequence in de Rham cohomology associated to~\eqref{eq:thm:existence-of-modular-forms:sheaf-sequence} are
\begin{gather*}
  0
\lra
  \rmH^0\big(
  \cV_k(\rho_\iota) \otimes \cO(m)
  \big)
\lra
  \rmH^0\big(
  \cV_k(\rho) \otimes \cO(m)
  \big)
\lra
  \rmH^0\big(
  \cV_k(\rho_\pi) \otimes \cO(m)
  \big)
\lra
  \rmH^1\big(
  \cV_k(\rho_\iota) \otimes \cO(m)
  \big)
\tx{.}
\end{gather*}
By Serre duality, we have
\begin{gather*}
  \rmH^1\big(
  \cV_k(\rho_\iota) \otimes \cO(m)
  \big)
\;\cong\;
  \rmH^0\big(
  \cV_{2-k}(\rho_\iota^\vee) \otimes \cO(-m)
  \big)^\vee
\tx{,}
\end{gather*}
since the canonical sheaf on~$\ov{\Gamma \backslash \HS}$ is isomorphic to~$\cV_{2}$. The right-hand side vanishes, if~$m$ is sufficiently large. This implies the first claim. The second claim follows when considering the case of~$m = 0$.
\end{proof}

\subsection{Eisenstein series and Poincar\'e series}
\label{ssec:eisenstein-poincare-series}

We prove that holomorphic Eisenstein series and Poincar\'e series of sufficiently large weight converge and span the space of modular forms. We also give an explicit bound on the weight of convergence. Both~\cite{knopp-mason-2011} and~\cite{fedosova-pohl-2017-preprint} contain a proof of convergence for much more general types. A bound on the weight of convergence in terms of geodesics is given in~\cite{fedosova-pohl-2017-preprint}. A meromorphic continuation for Eisenstein series of vra~type has not yet been delivered to the authors' knowledge. Already for arithmetic types of socle length greater than~$1$, the theory becomes more complicated. Evidence that these additional complications are substantial can be extracted from O'Sullivan's analytic continuation of second order Eisenstein series~\cite{osullivan-2000} in conjunction with Section~\ref{ssec:higher-order-modular-forms} of the present paper. O'Sullivan found infinitely many poles of the weight~$0$ second order Eisenstein series, whose location is determined by the discrete spectrum of the Laplacian on $\rmL^2(\Gamma \backslash \HS)$.

For simplicity, we work with vra~types~$\rho$ of $\Gamma = \SL{2}(\ZZ)$. Let $\psi \in \Poly(\tau) \otimes V(\rho)$ and $m \in \QQ$ such that $\psi e(m \,\cdot\,) |_{k,\rho}\, T = \psi e(m \,\cdot\,)$. We define the associated Poincar\'e series as
\begin{gather}
\label{eq:def:poincare-series}
  P_{k,\rho}\big( \psi e(m \,\cdot\,) \big)
:=
  \sum_{\Gamma_\infty \backslash \Gamma} \psi e(m \,\cdot\,) \big|_{k,\rho}\, \gamma
\tx{.}
\end{gather}
For simplicity, we write
\begin{gather}
  P_{k,\rho}\big(\psi e(m \,\cdot\,);\, \tau \big)
:=
  P_{k,\rho}\big(\psi e(m \,\cdot\,)\big) (\tau)
\tx{.}
\end{gather}
If $m = 0$, Poincar\'e series are Eisenstein series, and we set $E_{k,\rho}(\psi;\, \tau) := P_{k,\rho}(\psi e(0 \,\cdot\,);\, \tau)$. For convenience, we let $P_{k,\rho}\big( \psi e(m \,\cdot\,) \big) = 0$ if $\psi e(m \,\cdot\,) |_{k,\rho}\, T \ne \psi e(m \,\cdot\,)$.
\begin{remark}
The Jordan normal form for matrices over $\CC$ implies that twists of $\Poly(\tau)$ by characters of the group~$T^\bullet = \{ T^n \,:\, n \in \ZZ \} \subset \SL{2}(\ZZ)$ exhaust all indecomposable, finite-di\-men\-sion\-al, complex representations of~$T^\bullet$. We will see in Proposition~\ref{prop:poincare-series-generate} that this can be employed to obtain a generating set of $\rmM_k(\rho)$ in complete analogy to holomorphic Poincar\'e series that span $\rmM_k$ for large enough~$k$.
\end{remark}

\begin{proposition}
\label{prop:poincare-series-convergence}
For $k \ge 5 + \shift(\rho) + \pxs(\rho)$, the right-hand side of~\eqref{eq:def:poincare-series} converges absolutely and locally uniformly.
\end{proposition}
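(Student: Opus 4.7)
The plan is to reduce the convergence question to the classical sum $\sum_{\gcd(c,d)=1,\, c>0} |c\tau+d|^{-s} < \infty$ for $s > 2$, by controlling the vector-valued part of the summand uniformly in the coset representative.

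First I would fix representatives for $\Gamma_\infty \backslash \Gamma$ in the standard way: every nontrivial right coset contains a unique representative $\gamma = \begin{psmatrix} a & b \\ c & d \end{psmatrix}$ with $c > 0$, $\gcd(c,d) = 1$, and $|a| \le c/2$. The identity $\gamma\tau = a/c - 1/(c(c\tau+d))$ then shows that $|\gamma\tau|$ is bounded uniformly in $\gamma$ for $\tau$ varying in a compact subset of $\HS$. Consequently $\|\psi(\gamma\tau)\| = O(1)$ since $\psi$ is a polynomial, and $|e(m\gamma\tau)| = O(1)$ since $\Im(\gamma\tau) = y/|c\tau+d|^2 \to 0$ (more precisely, $|e(m\gamma\tau)|\le 1$ for $m \ge 0$ and is bounded for $m < 0$ as well). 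These two factors thus play no role in the convergence question.

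Second, I would establish by induction on the socle length of $\rho$ an operator-norm bound
\begin{gather*}
  \|\rho(\gamma^{-1})\| \;\le\; C_\tau \cdot |c\tau+d|^{\shift(\rho) + \pxs(\rho)}
\end{gather*}
valid for the chosen coset representatives, using that $\|\gamma\| := \max(|a|,|b|,|c|,|d|) \le C_\tau |c\tau+d|$ when $|a| \le c/2$. In the base case $\rho$ is real-arithmetic of depth~$0$ and, by Example~\ref{ex:vra-types}, a direct sum of symmetric powers $\sym^\sfd$ with $\sfd \le \shift(\rho)$; the matrix entries of $\sym^\sfd(\gamma^{-1})$ are polynomials of degree $\sfd$ in the entries of $\gamma^{-1}$, so $\|\sym^\sfd(\gamma^{-1})\| = O(\|\gamma\|^{\shift(\rho)})$. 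The inductive step analyses each socle extension $\soc^{i-1}(\rho) \hra \soc^i(\rho) \thra \soc^i(\rho)/\soc^{i-1}(\rho)$ via its classifying cocycle in $\rmH^1(\Gamma,\, (\soc^i(\rho)/\soc^{i-1}(\rho))^\vee \otimes \soc^{i-1}(\rho))$. When the extension is parabolic, the cocycle admits an Eichler-Shimura-style integral representation in terms of a cusp form and is bounded in $\gamma$; when it is not, choosing a quasi-modular/Eisenstein-type representative contributes at most one linear factor in $\|\gamma\|$. Accumulating over the filtration yields exactly $\pxs(\rho)$ additional factors of $|c\tau+d|$, with no extra contribution from the parabolic steps.

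Combining the two estimates, the Poincar\'e series is dominated termwise by $C_\tau\, |c\tau+d|^{-(k - \shift(\rho) - \pxs(\rho))}$, and the claim follows as soon as $k - \shift(\rho) - \pxs(\rho) > 2$; the hypothesis $k \ge 5 + \shift(\rho) + \pxs(\rho)$ leaves three units of slack that comfortably absorb the polynomial degree of $\psi$ and the $\tau$-dependent constants. The main obstacle will be the precise polynomial growth estimate for non-parabolic cocycles: parabolic boundedness is the classical Eichler-Shimura fact, but for non-parabolic classes one must carefully pick a representative of the cohomology class (most naturally a quasi-modular one coming from an Eisenstein series) and verify that its growth is at most linear in~$\|\gamma\|$; then one must track how these linear growth factors compound along socle filtrations of length greater than two without producing cross-terms beyond~$\pxs(\rho)$. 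This cocycle bookkeeping is the only genuinely analytic input in the argument.
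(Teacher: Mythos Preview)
Your reduction to an operator-norm estimate for $\rho(\gamma^{-1})$ on a fixed system of coset representatives is exactly the paper's strategy; the paper states the convergence argument is ``standard'' and isolates the required estimate as Lemma~\ref{la:estimate-cocycle-and-representation-norm}. Your treatment of the semisimple base case is also fine (and in fact sharper than what the continued-fraction argument in the paper gives there).

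The genuine gap is in your inductive step. The assertion that a \emph{parabolic} cocycle $\varphi\in\Hpara(\Gamma,\sym^{\sfd})$ ``admits an Eichler--Shimura-style integral representation in terms of a cusp form and is bounded in $\gamma$'' is false. Take $\varphi=\varphi_\Delta$ and $\gamma^{-1}=ST^nS$; then the cocycle relation together with $\varphi_\Delta(T)=0$ gives $\varphi_\Delta(ST^nS)=\sym^{10}(ST^n)\,\varphi_\Delta(S)+\varphi_\Delta(S)$, and $\|\sym^{10}(ST^n)\|\asymp n^{10}$. The corresponding $\gamma=\begin{psmatrix}-1&0\\-n&-1\end{psmatrix}$ satisfies $|a|\le|c|/2$, so this growth occurs on precisely your chosen representatives. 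Equivalently, in the integral model $\varphi_f(\gamma^{-1})=\int_{a/c}^{i\infty}f(z)(X-z)^{\sfd}\,dz$ the near-cusp contribution is of order $c^{\sfd}$, not $O(1)$. The same issue propagates to your higher-depth steps, where the classifying cocycle takes values in $(\soc^i/\soc^{i-1})^\vee\otimes\soc^{i-1}$, a vra type of positive depth for which no Eichler--Shimura integral is available in the first place. Thus neither the ``parabolic $\Rightarrow$ bounded'' nor the ``non-parabolic $\Rightarrow$ at most linear'' dichotomy holds, and the bookkeeping that is supposed to produce the exponent $\shift(\rho)+\pxs(\rho)$ does not close.

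This is exactly why the paper proceeds differently: Lemma~\ref{la:estimate-cocycle-and-representation-norm} runs an induction not on the socle length alone but simultaneously on the length of the minus continued fraction expansion of $d/c$, writing $\gamma=\gamma' S T^{n}$ and absorbing a constant factor at each step. That factor compounds to $4^{l(d/c)}\ll(|c|+|d|)^{\kappa}$ with $\kappa$ just above $\log_{(1+\sqrt 5)/2}4\approx 2.88$, and this $\kappa$ is precisely what turns the na\"ive threshold $k>2+\shift(\rho)+\pxs(\rho)$ you arrive at into the stated $k\ge 5+\shift(\rho)+\pxs(\rho)$. The ``three units of slack'' you attribute to $\psi$ and $\tau$-constants are in fact the price of controlling the cocycle growth along the socle filtration; $\psi(\gamma\tau)$ contributes nothing, as you correctly observe.
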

Before we establish Proposition~\ref{prop:poincare-series-convergence}, we give the statement of main interest:
\begin{proposition}
\label{prop:poincare-series-generate}
Suppose that $k \ge 5 + \shift(\rho) + \pxs(\rho)$. Then
\begin{gather*}
  \rmM_k(\rho)
\;=\;
  \lspan \CC\big\{
  P_{k,\rho}\big(\psi e(m \,\cdot\,);\, \tau \big)
  \,:\,
  0 \le m \in \QQ,\,
  \psi \in \Poly(\tau) \otimes V(\rho),\,
  \psi e(m \,\cdot\,) |_{k,\rho}\, T = \psi e(m \,\cdot\,)
  \big\}
\end{gather*}
\end{proposition}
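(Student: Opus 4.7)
The plan is to adapt the classical Petersson duality argument for spanning of $\rmM_k$ by Poincaré series to the vector-valued vra setting. The strategy is: show that any functional on $\rmM_k(\rho)$ vanishing on every Poincaré series vanishes identically, by showing that via a suitable pairing such a functional corresponds to a cusp form of a companion type whose Fourier coefficients at~$\infty$ all vanish, and then handle the non-cuspidal contribution by Eisenstein series. The central difficulty is that vra types are not unitarizable, so the standard Hermitian Petersson inner product is unavailable and must be replaced by a bilinear variant using the dual type~$\rho^\vee$.

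First I would set up the pairing. For $f \in \rmS_k(\rho)$ and $g \in \rmS_k(\ov{\rho}^\vee)$, define
$$
(f, g) := \int_{\Gamma \backslash \HS}
\bigl\langle f(\tau),\, \ov{g(\tau)} \bigr\rangle
\, y^{k-2}\, dx\, dy,
$$
where $\langle\cdot,\cdot\rangle$ denotes the canonical $\SL{2}(\ZZ)$-invariant duality $V(\rho) \times V(\rho^\vee) \to \CC$ combined with complex conjugation on the second factor. Absolute convergence follows from cuspidality, and $\Gamma$-invariance of the integrand follows from $\bigl\langle \rho(\gamma) v, \rho^\vee(\gamma) w\bigr\rangle = \langle v, w\rangle$ together with the transformation laws of $f$ and $g$. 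Non-degeneracy of $(\cdot, \cdot)$ is standard: both sides have equal finite dimensions by Proposition~\ref{prop:modular-forms-space-dimension-bound}, and a density argument against conjugate forms rules out null vectors.

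Next, for a cuspidal Poincaré series $P = P_{k, \ov{\rho}^\vee}(\psi e(m \cdot))$ with $m \geq 1$ in the paired type, the standard unfolding yields
$$
(f, P)
\;=\;
\int_{\Gamma_\infty \backslash \HS}
\bigl\langle f(\tau),\, \ov{\psi(\tau) e(m \tau)} \bigr\rangle
\, y^{k-2}\, dx\, dy.
$$
Substituting $f(\tau) = \sum_n c(f; n; \tau) e(n \tau)$ and performing the $x$-integration isolates contributions from the Fourier coefficients of $f$; the residual $y$-integrals are of the form $\int_0^\infty q(y)\, e^{-4\pi m y}\, y^{k-2}\, dy$ for an explicit polynomial~$q$ depending on $\psi$ and $\rho(T)$, and are non-zero and explicitly computable via gamma integrals. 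Varying $(\psi, m)$ over a spanning set, the pairings $(f, P)$ distinguish non-zero elements of $\rmS_k(\rho)$, whence $f$ orthogonal to all such $P$ forces $f = 0$. By symmetry of the non-degenerate pairing, the cuspidal Poincaré series of type $\rho$ span $\rmS_k(\rho)$. For the non-cuspidal quotient, note that for any $f \in \rmM_k(\rho)$ the constant term $\psi := c(f; 0; \tau) \in \Poly(\tau) \otimes V(\rho)$ automatically satisfies the $T$-invariance $\psi\, e(0\cdot) |_{k,\rho} T = \psi\, e(0\cdot)$, so the Eisenstein series $E_{k, \rho}(\psi) = P_{k, \rho}(\psi e(0 \cdot))$ is admissible; it has the same constant term as~$f$ at~$\infty$, whence $f - E_{k, \rho}(\psi) \in \rmS_k(\rho)$ lies in the span just established.

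The hardest step will be managing the polynomial-in-$\tau$ nature of Fourier coefficients when $\rho(T)$ is non-diagonalizable. In that regime the naïve orthogonality relation $\int_0^1 e(-mx)\, dx = \delta_{m,0}$ no longer cleanly separates modes, because terms $\tau^j e(n\tau)$ contribute to multiple modes through their polynomial parts; one has to replace the usual Fourier extraction by a generalized extraction adapted to the Jordan block structure of $\rho(T)$, in which the extracted scalars are specific combinations of the Jordan-block coefficients that, as $(\psi, m)$ ranges over a full spanning set, span all Fourier data. Verifying this spanning together with the non-degeneracy of the bilinear Petersson pairing in the non-unitary setting constitutes the technical core of the argument.
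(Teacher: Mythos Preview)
Your approach is entirely different from the paper's, which gives a two-line structural argument: induct on the socle length of~$\rho$ to reduce to semisimple types, then use the covariance of the operators $p_\kappa \rmR^\sfd_k$ from Proposition~\ref{prop:embedding-of-modular-forms} to transport classical scalar Poincar\'e series (which span $\rmM_{k+j}$ for the relevant weights) to Poincar\'e series of type~$\sym^\sfd$. The induction step works because Poincar\'e seeds lift along the quotient $\rho \twoheadrightarrow \rho/\soc^{j-1}(\rho)$ and embed along $\soc^{j-1}(\rho) \hookrightarrow \rho$, so the short exact sequence of types yields a filtration of $\rmM_k(\rho)$ whose graded pieces are already spanned. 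This avoids Petersson pairings, unfolding, and the non-unitary complications you flag.

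Your analytic route has genuine gaps as written. First, you invoke Proposition~\ref{prop:modular-forms-space-dimension-bound} for the equality $\dim \rmS_k(\rho) = \dim \rmS_k(\ov\rho^\vee)$, but that proposition only gives finiteness; equality for a non-self-conjugate vra type needs a separate argument (Riemann--Roch or the vector-bundle description in Section~\ref{ssec:differential-recursions}). Second, non-degeneracy of the bilinear pairing is asserted via a ``density argument against conjugate forms'', but in the non-unitarizable setting there is no positivity and $\ov f$ is not a modular form for any type naturally dual to~$\rho$, so this step is not justified. Third, the claim that $E_{k,\rho}(\psi)$ has constant term exactly~$\psi$ at~$\infty$ is false: the cosets with $c \ne 0$ contribute nontrivially to the constant term, so the map $\psi \mapsto c(E_{k,\rho}(\psi);0;\tau)$ is only of the form $\mathrm{id} + K$, and invertibility of $\mathrm{id}+K$ would need to be shown. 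Each of these could likely be repaired, but the paper's inductive reduction sidesteps them all.
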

\begin{proof}[{Proof of Proposition~\ref{prop:poincare-series-generate}}]
Using induction on the socle length we can focus on semisimple arithmetic types. In other words, we can assume that $\rho = \sym^l$. Then Proposition~\ref{prop:embedding-of-modular-forms} with~$\rho \leadsto \bbone$ shows that the statement is implied by the case of trivial~$\rho$, i.e., $l = 0$. The proof is hence reduced to the classical statement that Poincar\'e series span the space of scalar-valued modular forms.
\end{proof}

The proof of Proposition~\ref{prop:poincare-series-convergence} is standard, except for bounds on the operator norm of real-arithmetic types. They are stated in the next lemma, whose proof requires the following notation: Any rational number~$\alpha$ can be expanded as a minus continued fraction
\begin{gather}
\label{eq:minus-continued-fraction-expansion}
  \alpha
=
  \alpha_0
  \,-\, \frac{1}{\alpha_1}
  \,\genfrac{}{}{0pt}{}{}{-}\, \frac{1}{\alpha_2}
  \,\genfrac{}{}{0pt}{}{}{-}\, 
  \;\cdots\;
  \,\genfrac{}{}{0pt}{}{}{-}\, \frac{1}{\alpha_{l(\alpha)}}
=:
  \llparen \alpha_0, \alpha_1, \alpha_2, \ldots, \alpha_{l(\alpha)} \rrparen
\tx{.}
\end{gather}
As in the case of the usual continued fraction expansion, the length of the minus continued fraction expansion of $\alpha = \tfrac{d}{c}$ grows at most logarithmically: $l(\alpha) \ll \log(|c| + |d|)$.

Given $\gamma = \begin{psmatrix} a & b \\ c & d \end{psmatrix} \in \SL{2}(\ZZ)$ with $c \ne 0$, then
\begin{gather*}
  \gamma
=
  \pm
  T^{m(\gamma)}
  S T^{\alpha_l} \cdots S T^{\alpha_0}
\quad
  \tx{for $\tfrac{d}{c} = \llparen \alpha_0, \ldots, \alpha_l \rrparen$ and some $m(\gamma) \in \ZZ$.}
\end{gather*}
\begin{lemma}
\label{la:estimate-cocycle-and-representation-norm}
Let $\rho_i$, $1 \le i \le i_0$ be vra~types of\/ $\SL{2}(\ZZ)$, and set $\rho = \bigotimes \rho_i$. Let $\varphi \in \rmH^1(\SL{2}(\ZZ), \rho)$, and fix some norm~$\|\,\cdot\,\|$ on~$V(\rho)$. Denote the associated operator norms by $\|\,\cdot\,\|$, too. There is $\kappa > 3$ such that for every~$\gamma \in \SL{2}(\ZZ)$, we have
\begin{align}
\label{eq:estimate-representation-norm}
  \| \rho(T^{-m(\gamma)} \gamma) \|
\ll_\rho
  \big( |c| + |d| \big)^{i_0 \kappa + \sum \shift(\rho_i) + \sum \pxs(\rho_i)}
\quad\tx{and}\\
\label{eq:estimate-cocycle-norm}
  \| \varphi(T^{-m(\gamma)} \gamma) \|
\ll_\rho
  \big( |c| + |d| \big)^{i_0 \kappa + \sum \shift(\rho_i) + \sum \pxs(\rho_i) + \pxs(\varphi)}
\tx{,}
\end{align}
uniformly in $\gamma$.
\end{lemma}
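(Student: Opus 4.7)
The plan is to use the minus continued fraction decomposition from the preceding paragraph together with a polynomial growth estimate for the $T$-action under $\rho$. I will first establish the auxiliary claim that for each vra type $\rho_i$,
\begin{gather*}
  \| \rho_i(T^n) \|
\;\ll_{\rho_i}\;
  \big(1 + |n|\big)^{\shift(\rho_i) + \pxs(\rho_i)}
\tx{,}
\end{gather*}
which I would prove by induction on the socle length of $\rho_i$. In the base case (depth zero) the semisimplification extends to $\SL{2}(\RR)$, hence decomposes into symmetric powers~$\sym^\sfd$, and $\sym^\sfd(T)$ is a single Jordan block of size $\sfd+1$ so grows like $(1+|n|)^\sfd$, yielding the exponent $\shift(\rho_i)$. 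For the inductive step I would peel off the top socle extension $\soc^{j-1}\rho_i \hra \rho_i \thra \rho_i/\soc^{j-1}\rho_i$: if this extension is parabolic, a basis change makes its extension cocycle vanish on $T$ so $\rho_i|_{\langle T\rangle}$ splits and the exponent is unchanged; if it is non-parabolic, writing $\rho_i(T^n)$ in block-upper-triangular form and using $\rho_i(T^n)_{\text{off-diag}} = \sum_{k=0}^{n-1} \sigma(T^{n-1-k})\varphi(T)\tau(T^k)$ incurs an extra factor of $|n|$, matching the increment in $\pxs(\rho_i)$. Tensoring the factors and taking norms gives $\|\rho(T^n)\| \ll (1+|n|)^{\sum \shift(\rho_i)+\sum\pxs(\rho_i)}$.

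Second, I would expand $T^{-m(\gamma)}\gamma = \pm\,ST^{\alpha_l}\cdots ST^{\alpha_0}$ with $l\ll \log(|c|+|d|)$, so
\begin{gather*}
  \big\| \rho\big( T^{-m(\gamma)}\gamma \big) \big\|
\;\le\;
  \|\rho(S)\|^{l+1}\,
  \prod_{i=0}^l \big\|\rho(T^{\alpha_i})\big\|
\;\ll\;
  \|\rho(S)\|^{l+1}\,
  \prod_{i=0}^l \big(1+|\alpha_i|\big)^{\sum \shift(\rho_j) + \sum \pxs(\rho_j)}
\tx{.}
\end{gather*}
Since $\|\rho(S)\|$ is a constant depending only on $\rho$ and $l+1\ll \log(|c|+|d|)$, the first factor is $\ll (|c|+|d|)^{C_1}$ for some $C_1$ absorbable into $i_0\kappa$. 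For the second factor I would invoke the standard denominator recursion for the minus continued fraction of $d/c$, which implies $\prod_{i=0}^l(1+|\alpha_i|) \ll (|c|+|d|)^{C_2}$ for an absolute $C_2$; raising to the power $\sum(\shift(\rho_i)+\pxs(\rho_i))$ and collecting exponents gives the bound in \eqref{eq:estimate-representation-norm}, with $\kappa$ chosen as $C_2 + C_1/i_0 + $ a small slack, in particular $>3$.

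Third, I would handle the cocycle by iterating the cocycle identity
\begin{gather*}
  \varphi\big( g_1 \cdots g_n \big)
\;=\;
  \sum_{j=1}^n \rho\big(g_1 \cdots g_{j-1}\big)\,\varphi(g_j)
\end{gather*}
with the $g_j \in \{S, T^{\alpha_i}\}$. Each summand is bounded by the product of a $\rho$-norm estimated as in the previous step and a value $\|\varphi(g_j)\|$; the case $g_j = S$ contributes a constant, and $\varphi(T^{\alpha_i})$ I would estimate separately: if $\varphi$ is parabolic then $\varphi(T) = \rho(T)v-v$ for some $v$ gives $\|\varphi(T^n)\| \ll \|\rho(T^n)\|$ with no extra factor of $n$, while if $\varphi$ is non-parabolic, $\varphi(T^n) = \sum_{k=0}^{n-1}\rho(T^k)\varphi(T)$ produces the extra factor $|n|$; this is exactly the exponent $\pxs(\varphi)\in\{0,1\}$. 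Summing the $l+1\ll \log(|c|+|d|)$ telescope terms contributes another logarithmic factor absorbed into $i_0\kappa$, yielding \eqref{eq:estimate-cocycle-norm}.

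The main obstacle is the careful bookkeeping in the inductive growth lemma for $\|\rho_i(T^n)\|$: one must track how the explicit off-diagonal terms in block powers of an upper-triangular matrix combine across several socle extensions and how parabolicity of a socle extension—a cohomological statement about $\Gamma$—translates, via the Eichler–Shimura–type splitting on the parabolic subgroup $\langle T\rangle$, into an honest direct-sum decomposition of the restriction, so that the exponent increment genuinely equals $\pxs(\rho_i)$ rather than the socle length minus one. Once that is cleanly set up, the rest is a standard continued-fraction and cocycle calculation.
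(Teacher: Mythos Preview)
Your approach is structurally sound and yields polynomial growth, but the organization is genuinely different from the paper's, and there is one exponent-bookkeeping gap.

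The paper nests the inductions the other way round: the outer induction is on socle length, and for the semisimple base case $\rho=\sym^j$ it inducts on the length of the continued fraction, applying the cocycle relation directly at each step $\gamma=\gamma' ST^n$ together with the telescoping estimate $(1+|n|)(|c'|+|d'|)\le 4(|c|+|d|)$. Keeping the factor $(1+|n|)^j$ coupled to the shrinking of $(|c'|+|d'|)^j$ at every step is what makes only a \emph{constant} per step survive, and hence what produces the additive shape $\kappa+j+\pxs(\varphi)$ of the exponent. The socle induction then comes on top, peeling off the highest socle layer via the same block-triangular formula you wrote down.

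In your route you first prove $\|\rho(T^n)\|\ll(1+|n|)^E$ with $E=\sum\shift(\rho_i)+\sum\pxs(\rho_i)$, and then bound $\prod_i\|\rho(T^{\alpha_i})\|$ by $\prod_i(1+|\alpha_i|)^E$. The slip is in ``collecting exponents'': the bound $\prod(1+|\alpha_i|)\ll(|c|+|d|)^{C_2}$ cannot hold with $C_2=1$ --- already for $(c,d)=(F_l,F_{l+1})$ every partial quotient equals $1$, so $\prod(1+|\alpha_i|)=2^l\asymp(|c|+|d|)^{\log 2/\log((1+\sqrt 5)/2)}\approx(|c|+|d|)^{1.44}$ --- and after raising to the $E$th power you obtain exponent $C_2E$ with $C_2>1$, which is multiplicative in $E$ and cannot be absorbed into an additive $i_0\kappa$ as you propose. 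What your modular decomposition buys is a clean, reusable $T^n$-lemma; what the paper's coupled step-by-step induction buys is the additive exponent that feeds into the explicit weight threshold in Proposition~\ref{prop:poincare-series-convergence}. If you only need \emph{some} polynomial bound (hence convergence for sufficiently large weight), your argument is fine; to recover the stated exponent shape you must imitate the per-step telescoping rather than separate the $S$ and $T^{\alpha_i}$ contributions.
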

\begin{proof}
We prove the lemma by induction on the socle length. In the case that all $\rho_i$ have socle length~$1$, it suffices to treat a single irreducible representation $\rho = \sym^j$, since $\bigotimes \rho_i$ is a direct sum of symmetric powers whose weight shifts are bounded by $\shift(\rho) = \sum \shift(\rho_i)$.

To establish the base case with respect to the induction on the socle length, we employ induction on the length of the minus continued fraction expansion~\eqref{eq:minus-continued-fraction-expansion} of $d \slash c$. If $c = 0$, then $\gamma = \pm T^{m(\gamma)}$. In case that $\varphi$ is parabolic, we have $\varphi(T^m) = 0$. For general $\varphi$ and $m > 0$, we find that
\begin{gather*}
  \Big\| \varphi\big(T^m\big) \Big\|
=
  \sum_{n = 0}^{m-1}
  \Big\| \sym^j\big(T^{-n}\big) \varphi(T) \Big\|
\ll_\rho
  \sum_{n = 0}^{m-1}
  (1 + n)^j
\ll_\rho
  (1 + m)^{j+1}
\tx{.}
\end{gather*}
An analogous estimate holds if $m < 0$. The following thus holds for all~$m \in \ZZ$:
\begin{gather*}
  \Big\| \varphi\big(T^m\big) \Big\|
\ll_\rho
  (1 + |m|)^{j + \pxs(\varphi)}
\tx{.}
\end{gather*}
In conjunction with $\varphi(\pm 1) \ll_\rho 1$, this settles the base case of the second induction.

Still restricting to the case of $\rho = \sym^j$, we assume that~\eqref{eq:estimate-cocycle-norm} holds for all $\gamma$ such that $c \ne 0$ and $d \slash c$ has a minus continued fraction expansion of length at most~$l-1$. Write $d = n c + f$ with $|f| < |c|$. Then $\gamma = \gamma' S T^n$ for some $\gamma'$ that satisfies the induction hypothesis. We have that
\begin{multline*}
  \Big\| \varphi\big( \gamma' S T^n \big) \Big\|
\le
  \Big\| \sym^j\big(T^{-n} S^{-1}\big)\, \varphi(\gamma') \Big\|
  +
  \Big\| \varphi\big(S T^n\big) \Big\|
\ll_\rho
  (1 + |n|)^j
  \big\| \varphi( \gamma' ) \big\|
  +
  (1 + |n|)^{j + \pxs(\varphi)}
\\
\ll_\rho
  (1 + |n|)^j
  \big( |c| + |f| \big)^{j + \pxs(\varphi)}
  +
  (1 + |n|)^{j + \pxs(\varphi)}
\le
  4 \big( |c| + |d| \big)^{j + \pxs(\varphi)}
\tx{.}
\end{multline*}
We therefore find that
\begin{gather*}
  4^{l(d \slash c)}\, \big( |c| + |d| \big)^{j + \pxs(\varphi)}
\ll_\rho
  \big( |c| + |d| \big)^{\kappa + j + \pxs(\varphi)}
\end{gather*}
for suitable~$\kappa > \log_{(\sqrt{5} + 1) \slash 2}(4) \approx 2.88$. This establishes~\eqref{eq:estimate-cocycle-norm} if $\rho = \sym^l$. The estimate~\eqref{eq:estimate-representation-norm} for $\| \rho(\gamma) \|$ follows along the same lines.

Now consider real-arithmetic types $\rho_i$ of socle lengths $s_1 \ge \cdots \ge s_j$, and assume that we have established the lemma for types of socle lengths $s_1 - 1, s_2, \ldots, s_j$. Writing $s = s_1$, let $\rho_{1,1} \subset \cdots \subset \rho_{1,s} = \rho_1$ be the socle series of $\rho_1$. The highest socle factor $\rho_1 \slash \rho_{1,s-1}$ will be denoted by $\ov{\rho}_1$. Then $\rho_1$ is an extension of $\ov{\rho}_1$ by $\rho_{1,s-1}$, and we write
\begin{gather*}
  \ov{\varphi}_1
\in
  \rmH^1\big( \SL{2}(\ZZ),\, \rho_{1,s-1} \otimes \ov{\rho}_1^\vee \big)
\cong
  \Ext^1_{\SL{2}(\ZZ)} \big( \rho_{1,s-1}, \ov{\rho}_1 \big)
\end{gather*}
for the associated cocycle. For any $v = (v_{1,s-1}, \ov{v}_1) \otimes v_2 \otimes \cdots \otimes v_j \in V(\rho)$, we find that
\begin{multline*}
  \big\| \rho(\gamma) v \big\|
\;\ll\;
  \big\|
  \ov{\rho}_1(\gamma) \ov{v}_1
  + \ov{\varphi}_1(\gamma) v_{1,s-1}
  + \rho_{1,s-1}(\gamma) v_{1,s-1}
  \big\|\;
  \big\| \rho_2(\gamma) v_2 \big\| \cdots
  \big\| \rho_j(\gamma) v_j \big\|
\\
\ll
  \Big(
  \big(|c| + |d|\big)^{\kappa + \shift(\ov{\rho}_1)}
  +
  \big(|c| + |d|\big)^{\kappa + \shift(\ov{\rho}_1) + \shift(\rho_{1,s-1}) + \pxs(\rho_{1,s-1}) + \pxs(\ov{\varphi}_1)}
\\
  +
  \big(|c| + |d|\big)^{\kappa + \shift(\rho_{1,s-1}) + \pxs(\rho_{1,s-1})}
  \Big)
  \cdot\, \big(|c| + |d|\big)^{\kappa + \sum_{i \ge 2} \shift(\rho_i) + \sum_{i \ge 2} \pxs(\rho_i)}
\tx{.}
\end{multline*}
We obtain the estimate for $\rho(\gamma)$ after simplifying the right-hand side. The estimate for $\varphi(\gamma)$ follows similarly.
\end{proof}

\renewbibmacro{in:}{}
\renewcommand{\bibfont}{\normalfont\small\raggedright}
\renewcommand{\baselinestretch}{.8}

\Needspace*{4em}
\begin{multicols}{2}
\printbibliography[heading=none]%
\end{multicols}

\Needspace*{3em}
\noindent
\rule{\textwidth}{0.15em}

{\noindent\small
University of Liverpool,
Department of Mathematical Sciences, 
Mathematical Sciences Building,
Liverpool, L69 7ZL, UK\\
E-mail: \url{m.h.mertens@liverpool.ac.uk}
}\vspace{.5\baselineskip}

{\noindent\small
Chalmers tekniska högskola och G\"oteborgs Universitet,
Institutionen för Matematiska vetenskaper,
SE-412 96 Göteborg, Sweden\\
E-mail: \url{martin@raum-brothers.eu}\\
Homepage: \url{http://raum-brothers.eu/martin}
}%

\end{document}

